\title{Communication-Efficient Gradient Descent-Accent Methods for Distributed Variational Inequalities: Unified Analysis and Local Updates}
\author{Siqi Zhang\thanks{Equal contribution.}\\
	AMS \& MINDS\\
	Johns Hopkins University\\
	\texttt{szhan207@jhu.edu} 
	\And
	Sayantan Choudhury\footnotemark[1]\\
	AMS \& MINDS \\
	Johns Hopkins University\\
	\texttt{schoudh8@jhu.edu} 
	\And
	Sebastian U Stich \\
	CISPA Helmholtz Center for Information Security\\
	\texttt{stich@cispa.de}
	\And
	\And
	\And
	\And
	\And
	\And
	Nicolas Loizou \\
	AMS \& MINDS\\
	Johns Hopkins University\\
	\texttt{nloizou@jhu.edu}
}
\begin{document}

	\maketitle
	
	\begin{abstract}
		Distributed and federated learning algorithms and techniques associated primarily with minimization problems. However, with the increase of minimax optimization and variational inequality problems in machine learning, the necessity of designing efficient distributed/federated learning approaches for these problems is becoming more apparent. 
		In this paper, we provide a unified convergence analysis of communication-efficient local training methods for distributed variational inequality problems (VIPs). 
		Our approach is based on a general key assumption on the stochastic estimates that allows us to propose and analyze several novel local training algorithms under a single framework for solving a class of structured non-monotone VIPs.  We present the first local gradient descent-accent algorithms  with provable \emph{improved communication complexity} for solving distributed variational inequalities on heterogeneous data.  The general algorithmic framework recovers state-of-the-art algorithms and their sharp convergence guarantees when the setting is specialized to minimization or minimax optimization problems.
		Finally, we demonstrate the strong performance of the proposed algorithms compared to state-of-the-art methods when solving federated minimax optimization problems.
	\end{abstract}
	
	\section{Introduction}
	\label{asdoasjdna}
	
	Federated learning (FL) \citep{konevcny2016federated,mcmahan2017communication,kairouz2021advances} has become a fundamental distributed machine learning framework in which multiple clients collaborate to train a model while keeping their data decentralized. 
	Communication overhead is one of the main bottlenecks in FL \citep{karimireddy2020scaffold}, which motivates the use by the practitioners of advanced algorithmic strategies to alleviate the communication burden. One of the most popular and well-studied strategies to reduce the communication cost is increasing the number of local steps between the communication rounds~\citep{mcmahan2017communication,stich2018local,assran2019stochastic,khaled2020tighter,koloskova2020unified}.
	
	Towards a different objective, several recently proposed machine learning approaches have moved from the classical optimization formulation to a multi-player game perspective, where a good model is framed as the equilibrium of a game, as opposed to the minimizer of an objective function. Such machine learning models are framed as games (or in their simplified form as minimax optimization problems) and involve interactions between players, e.g., Generative Adversarial Networks (GANs) \citep{goodfellow2014generative}, adversarial training \citep{madry2018towards}, robust optimization \citep{namkoong2016stochastic}, and multi-agent reinforcement learning \citep{zhang2021multi}.
	
	Currently, federated learning algorithms and techniques are associated primarily with minimization problems. However, with the increase of game-theoretical formulations in machine learning, the necessity of designing efficient federated learning approaches for these problems is apparent. In this work, we are interested in the design of communication-efficient federated learning algorithms suitable for multi-player game formulations. In particular, we consider a more
	abstract formulation and focus on solving the following distributed/federated variational inequality problem (VIP):
	\begin{equation}
		\label{eq:FedVIP}
		\text{Find } z^*\in\mathbb{R}^{d'}, \text{ such that } \autoprod{F(z^*),z-z^*}\geq 0,\quad
		\forall z\in\mathbb{R}^{d'},
	\end{equation}
	where $F:\mathbb{R}^{d'}\rightarrow\mathbb{R}^{d'}$ is a structured non-monotone operator. We assume the operator $F$ is distributed across $n$ different nodes/clients and written as $F(z)=\frac{1}{n}\sum_{i=1}^n f_i(z)$. In this setting, the operator $f_i:\mathbb{R}^{d'}\rightarrow\mathbb{R}^{d'}$  is
	owned by and stored on client $i$. Since we are in the unconstrained scenario, problem~\eqref{eq:FedVIP} can be equivalently written as finding $z^*\in\mathbb{R}^{d'}$, such that $F(z^*)= 0$~\citep{loizou2021stochastic,gorbunov2022stochastic}.
	
	Problem~\eqref{eq:FedVIP} can be seen as a more abstract formulation of several popular problems. In particular, the selection of the operator $F$ captures classical minimization problems $\min_{z\in\mathbb{R}^d}f(z)$, minimax optimization problems with $z =(x_1,x_2)$, $\min_{x_1\in\mathbb{R}^{d_1}}\max_{x_2\in\mathbb{R}^{d_2}}f(x_1, x_2)$, and multi-player games as special cases. For example, in the minimization case, $F(z)=\nabla f(z)$, and it is easy to see that $z^*$ is the solution of \eqref{eq:FedVIP} if and only if $\nabla f(z^*)=0$, while in the minimax case $F(z)=F(x_1, x_2)=(\nabla_{x_1} f(x_1,x_2), -\nabla_{x_2} f(x_1,x_2))$ and $z^*$ is the solution if and only if $z^*$ is a stationary point of $f$.
	More generally, an important special case of the formulation \eqref{eq:FedVIP} is the problem of finding the equilibrium point in an $N$\nobreakdash-players game. In this setting, each player $j$ is simultaneously trying to find the action $x_j^*$ which minimizes with respect to $x_j \in \R^{d_j}$ their own cost function $f_j(x_j, x_{-j})$, while the other players are playing $x_{-j}$, which represents $x = (x_1,\ldots,x_k)$ with the component~$j$ removed. In this scenario, operator $F(x)$ is the concatenation over all possible $j$'s of $\nabla_{x_j} f_j (x_j, x_{-j})$. 
	
	Finally, in the distributed/federated learning setting, formulation~\eqref{eq:FedVIP} captures both classical FL minimization~\citep{mcmahan2017communication} and FL minimax optimization problems~\citep{sharma2022federated, deng2021local} as special cases. 
	\vspace{-2mm}
	\subsection{Main Contributions}
	\label{MainCont}
	
	Our key contributions are summarized as follows:
	\begin{itemize}[leftmargin=1em]
		\item \textbf{VIPs and Federated Learning:} We present the first connection between regularized VIPs and federated learning setting by explaining how one can construct a consensus reformulation of problem\eqref{eq:FedVIP} by appropriately selecting the regularizer term $R(x)$ (see discussion in Section~\ref{sec:connection_VIP_FL}). 
		
		\item \textbf{Unified Framework:} 
		We provide a unified theoretical framework for the design of efficient local training methods for solving the distributed VIP~\eqref{eq:FedVIP}. For our analysis, we use a general key assumption on the stochastic estimates that allows us to study, under a single framework, several stochastic local variants of the proximal gradient descent-ascent (GDA) method \footnote{Throughout this work we use this suggestive name (GDA and Stochastic GDA/ SGDA) motivated by the minimax formulation, but we highlight that our results hold for the more general VIP~\eqref{eq:FedVIP}}.  
		
		\item    
		\textbf{Communication Acceleration:} In practice, local algorithms are superior in communication complexity compared to their non-local counterparts. However, in the literature, the theoretical communication complexity of Local GDA does not improve upon the vanilla distributed GDA (i.e., communication in every iteration). Here we close the gap and provide the first communication-accelerated local GDA methods. For the deterministic strongly monotone and smooth setting, our method requires $\mathcal{O}\autopar{\kappa\ln\frac{1}{\epsilon}}$ communication rounds over the $\mathcal{O}\autopar{\kappa^2\ln\frac{1}{\epsilon}}$ of vanilla GDA (and previous analysis of Local GDA), where $\kappa$ is the condition number. See Table~\ref{TableFirst} for further details.
		
		\item \textbf{Heterogeneous Data:} Designing algorithms for federated minimax optimization and distributed VIPs,
		is a relatively recent research topic, and existing works heavily rely on the bounded heterogeneity assumption, which may be unrealistic in FL setting~\citep{hou2021efficient, beznosikov2020distributed,sharma2022federated}. Thus, they can only solve problems with similar data between clients/nodes. In practical scenarios, the private data stored by a user on a mobile device and the data of different users can be arbitrarily heterogeneous. Our analysis does not assume bounded heterogeneity, and the proposed algorithms (ProxSkip-VIP-FL and ProxSkip-L-SVRGDA-FL) guarantee convergence with an improved communication complexity. 
		
		\item \textbf{Sharp rates for known special cases:} For the known methods/settings fitting our framework, our general theorems recover the best rates known for these methods. For example, the convergence results of the Proximal (non-local) SGDA algorithm for regularized VIPs \citep{beznosikov2022stochastic} and the ProxSkip algorithm for composite minimization problems \citep{mishchenko2022proxskip} can be obtained as special cases of our analysis, showing the tightness of our convergence guarantees. 
		
		\item \textbf{Numerical Evaluation:} 
		In numerical experiments, we illustrate the most important properties of the proposed methods by comparing them with existing algorithms in federated minimax learning tasks. The numerical results corroborate our theoretical findings.
	\end{itemize}
	
	\begin{table}[tbp]
		\centering
		\footnotesize
		\renewcommand{\arraystretch}{1.25}
		\caption{
			Summary and comparison of algorithms for solving strongly-convex-strongly-concave federated minimax optimization problems (a special case of the distributed VIPs \eqref{eq:FedVIP}).
		}
		\begin{threeparttable}[htb]
			\begin{tabular}{c | c | c | c }
				\hline \hline
				\textbf{Algorithm}\tnote{1}
				& \textbf{Acceleration?}
				& \textbf{Variance Red.?}
				& \textbf{Communication Complexity}
				\\
				\hline \hline
				\makecell[c]{
					\textbf{GDA/SGDA}
					\\
					\citep{fallah2020optimal}
				}            
				& ---
				& \xmark
				& $\mathcal{O}\autopar{\max\autobigpar{\kappa^2, \frac{\sigma_*^2}{\mu^2\eps}}\ln\frac{1}{\eps}}$
				\\
				\hline
				\makecell[c]{
					\textbf{Local GDA/SGDA}
					\\
					\citep{deng2021local}
				}            
				& \xmark
				& \xmark
				& $\mathcal{O}\autopar{\sqrt{\frac{\kappa^2(\sigma_*^2+\Delta^2)}{\mu\epsilon}}}$
				\\
				\hline
				\cellcolor{bgcolor2}\begin{tabular}{c}
					{\bf ProxSkip-GDA/SGDA-FL}\\
					(This work)
				\end{tabular} 
				& \cellcolor{bgcolor2}\cmark
				& \cellcolor{bgcolor2}\xmark
				& \cellcolor{bgcolor2} {\small $\mathcal{O}\autopar{\sqrt{\max\autobigpar{\kappa^2, \frac{\sigma_*^2}{\mu^2\epsilon}}}\ln\frac{1}{\eps}}$}
				\\
				\hline
				\cellcolor{bgcolor2}\begin{tabular}{c}
					{\bf ProxSkip-L-SVRGDA-FL}\\
					(This work)
				\end{tabular} 
				& \cellcolor{bgcolor2}\cmark
				& \cellcolor{bgcolor2}\cmark
				& \cellcolor{bgcolor2} $\mathcal{O}\autopar{\kappa\ln\frac{1}{\eps}}$
				\\
				\hline \hline
			\end{tabular}
			\begin{tablenotes}
				\footnotesize
				\item[1] ``Acceleration"~=~whether the algorithm enjoys acceleration in communication compared to its non-local counterpart, ``Variance Red.?"~=~whether the algorithm (in stochastic setting) applies variance reduction, ``Communication Complexity"~=~the communication complexity of the algorithm.  Here $\kappa=L/\mu$ denotes the condition number where $L$ is the Lipschitz parameter and $\mu$ is the modulus of strong convexity, $\sigma_*^2$ corresponds to the variance level at the optimum ($\sigma_*^2=0$ in deterministic setting), and $\Delta$ captures the bounded variance.  For a more detailed comparison of complexities, please also refer to Table~\ref{table:comparison_v2}.
			\end{tablenotes}
		\end{threeparttable}
		\label{TableFirst}
		\vspace{-1.5em}
	\end{table}
	
	\section{Technical Preliminaries}
	\vspace{-2mm}
	\subsection{Regularized VIP and Consensus Reformulation}
	\label{sec:connection_VIP_FL}
	
	Following classical techniques from \cite{parikh2014proximal}, the distributed VIP~\eqref{eq:FedVIP} can be recast into a consensus form:  
	\begin{equation}
		\label{eq:objective_FL_reformulation}
		\text{Find } x^*\in\mathbb{R}^d, \text{such that }  \autoprod{F(x^*),x-x^*}+R(x)-R(x^*)\geq 0,\quad
		\forall x\in\mathbb{R}^d,
	\end{equation}
	where $d=nd'$ and
	\begin{equation}
		\label{eq:mpFL_VIP_form}
		\small
		F(x)\triangleq\sum_{i=1}^n F_i(x_i),
		\quad
		R(x)
		\triangleq
		\begin{cases}
			0 & \text{if } x_1=x_2=\cdots=x_n\\
			+\infty & \text{otherwise}.
		\end{cases}
	\end{equation}
	Here $x=\autopar{x_1, x_2, \cdots, x_n}\in\mathbb{R}^d$, $x_i\in\mathbb{R}^{d'}$, $F:\mathbb{R}^d\rightarrow\mathbb{R}^d$, $F_i:\mathbb{R}^{d'}\rightarrow\mathbb{R}^d$ and $F_i(x_i)=(0, \cdots, f_i(x_i), \cdots, 0)$ where $[F(x)]_{(id'+1):(id'+d')}=f_i(x_i)$.
	Note that the reformulation requires a dimension expansion on the variable enlarged from $d'$ to $d$. It is well-known that the two problems \eqref{eq:FedVIP} and \eqref{eq:objective_FL_reformulation} are equivalent in terms of the solution, which we detail in Appendix \ref{apdx:thm_FL_Operator_Check}.
	
	Having explained how the problem~\eqref{eq:FedVIP} can be converted into a regularized VIP~\eqref{eq:objective_FL_reformulation}, let us now present the Stochastic Proximal Method \citep{parikh2014proximal, beznosikov2022stochastic}, one of the most popular algorithms for solving  general regularized VIPs\footnote{We call general regularized VIPs, the problem \eqref{eq:objective_FL_reformulation} where $F:\mathbb{R}^d\rightarrow\mathbb{R}^d$ is an operator and $R:\mathbb{R}^d\rightarrow\mathbb{R}$ is a regularization term (a proper lower semicontinuous convex function). 
	}. The update rule of the Stochastic Proximal Method defines as follows:
	\begin{equation}
		\label{proxOpe}
		x_{t+1}=\prox_{\gamma R}\autopar{x_t-\gamma g_t}
		\quad\text{where}\ \ 
		\prox_{\gamma R}\autopar{x}
		\triangleq
		\argmin_{v\in\mathbb{R}^d}\autobigpar{R(v)+\frac{1}{2\gamma}\autonorm{v-x}^2}.
	\end{equation}
	Here $g_t$ is an unbiased estimator of $F(x_t)$ and $\gamma>0$ is the step-size of the method.
	As explained in \citep{mishchenko2022proxskip}, it is typically assumed that the proximal computation~\eqref{proxOpe} can be evaluated in closed form (has exact value), and its computation it is relatively cheap. That is, the bottleneck in the update rule of the Stochastic Proximal Method is the computation of $g_t$. This is normally the case when the regularizer term $R(x)$ in general regularized VIP has a simple expression, like $\ell_1$-norm ($R(x)=\|x\|_1$) and $\ell_2$-norm ($R(x)=\|x\|^2_2$). For more details on closed-form expression~\eqref{proxOpe} under simple choices of regularizers $R(x)$, we refer readers to check \cite{parikh2014proximal}.
	
	However, in the consensus reformulation of the distributed VIP, the regularizer $R(x)$ has a specific expression~\eqref{eq:mpFL_VIP_form} that makes the proximal computation~\eqref{proxOpe} expensive compared to the evaluation of $g_t$. 
	In particular, note that by following the definition of $R(x)$ in \eqref{eq:mpFL_VIP_form}, we get that $\Bar{x}=\frac{1}{n}\sum_{i=1}^n x_i$ and $\prox_{\gamma R}\autopar{x}=\autopar{\Bar{x}, \Bar{x}, \cdots, \Bar{x}}.$
	So evaluating $\prox_{\gamma R}\autopar{x}$ is equivalent to taking the average of the variables $x_i$~\citep{parikh2014proximal},
	meaning that it involves a high communication cost in distributed/federated learning settings. This was exactly the motivation behind the proposal of the ProxSkip algorithm in \cite{mishchenko2022proxskip}, which reduced the communication cost by allowing the expensive proximal
	operator to be skipped in most iterations.
	
	In this work, inspired by the ProxSkip approach of \cite{mishchenko2022proxskip}, we provide a unified framework for analyzing efficient algorithms for solving the distributed VIP~\eqref{eq:FedVIP} and its consensus reformulation problem~\eqref{eq:objective_FL_reformulation}. In particular in Section \ref{sec:centralized}, we provide algorithms for solving general regularized VIPs (not necessarily a distributed setting). Later in Section~\ref{asdas} we explain how the proposed algorithms can be interpreted as distributed/federated learning methods.
	
	\vspace{-2mm}
	\subsection{Main Assumptions}
	\vspace{-2mm}
	Having presented the consensus reformulation of distributed VIPs, let us now provide the main conditions of problem \eqref{eq:objective_FL_reformulation} assumed throughout the paper.
	\begin{assumption}
		\label{assume:main}
		We assume that Problem \eqref{eq:objective_FL_reformulation} has a unique solution $x^*$ and
		\begin{enumerate}[leftmargin=1em]
			\item The operator $F$ is $\mu$-quasi-strongly monotone and $\ell$-star-cocoercive with $\mu, \ell>0$, i.e., $\forall x\in\mathbb{R}^d$, 
			\begin{equation*}
				\begin{split}
					\autoprod{F(x)-F(x^*), x-x^*}&\geq\mu\autonorm{x-x^*}^2,
					\quad
					\autoprod{F(x)-F(x^*), x-x^*}\geq \frac{1}{\ell}\autonorm{F(x)-F(x^*)}^2.
				\end{split}
			\end{equation*}
			\vspace{-2em}
			\item The function $R(\cdot)$ is a proper lower semicontinuous convex function.
		\end{enumerate}
	\end{assumption}
	
	Assumption~\ref{assume:main}
	is weaker than the classical strong-monotonicity and Lipschitz continuity assumptions commonly used in analyzing methods for solving problem \eqref{eq:objective_FL_reformulation} and captures non-monotone and non-Lipschitz problems as special cases~\citep{loizou2021stochastic}. In addition, given that the operator $F$ is $L$-Lipschitz continuous and $\mu$-strongly monotone, it can be shown that the operator $F$ is ($\kappa L$)-star-cocoercive where $\kappa\triangleq L/\mu$ is the condition number of the operator~\citep{loizou2021stochastic}. 
	
	Motivated by recent applications in machine learning, in our work, we mainly focus on the case where we have access to unbiased estimators of the operator $F$. Regarding the inherent stochasticity, we further use the following key assumption, previously used in \cite{beznosikov2022stochastic} for the analysis of Proximal SGDA, to characterize the behavior of the operator estimation.
	\begin{assumption}[Estimator]
		\label{assume:stochastic}
		For all $t\geq 0$, we assume that the estimator $g_t$ is unbiased ($\EE[g_t]=F(x_t)$). Next, we assume that there exist non-negative constants $A, B, C, D_1, D_2\geq 0$, $\rho\in(0,1]$ and a sequence of (possibly random) non-negative variables $\{\sigma_{t}\}_{t\geq0}$ such that for all $t \geq 0$:
		\begin{equation*} 
			\begin{split}
				\mathbb{E}\autonorm{g_t-F(x^*)}^2
				\leq &\ 
				2A\autoprod{F(x)-F(x^*), x-x^*}+B\sigma_t^2+D_1,\\
				\mathbb{E}[\sigma_{t+1}^2]
				\leq &\ 
				2C\autoprod{F(x)-F(x^*), x-x^*}+(1-\rho)\sigma_t^2+D_2.
			\end{split}
		\end{equation*}   
	\end{assumption}
	
	Variants of Assumption~\ref{assume:stochastic} have first proposed in classical minimization setting for providing unified analysis for several stochastic optimization methods and, at the same time, avoid the more restrictive bounded gradients and bounded variance assumptions~\citep{gorbunov2020unified,khaled2020unified}. Recent versions of Assumption~\ref{assume:stochastic} have been used in the analysis of Stochastic Extragradient~\citep{gorbunov2022stochastic} and stochastic gradient-descent assent~\citep{beznosikov2022stochastic} for solving minimax optimization and VIP problems. To our knowledge, analysis of existing local training methods for distributed VIPs depends on bounded variance conditions. Thus, via Assumption~\ref{assume:stochastic}, our convergence guarantees hold under more relaxed assumptions as well. Note that Assumption~\ref{assume:stochastic} covers many well-known conditions: for example, when $\sigma_t\equiv 0$ and $F(x^*)=0$, it will recover the recently introduced expected co-coercivity condition \citep{loizou2021stochastic} where $A$ corresponds to the modulus of expected co-coercivity, and $D_1$ corresponds to the scale of estimator variance at $x^*$. In Section~\ref{sec:centralized_special_case}, we explain how Assumption~\ref{assume:stochastic} is satisfied for many well-known estimators $g_t$, including the vanilla mini-batch estimator and variance reduced-type estimator. That is, for the different estimators (and, as a result, different algorithms), we prove the closed-form expressions of the parameters $A, B, C, D_1, D_2\geq 0$, $\rho\in(0,1]$ for which Assumption~\ref{assume:stochastic} is satisfied. 
	\vspace{-4mm}
	\section{General Framework: ProxSkip-VIP}
	\label{sec:centralized}
	\vspace{-3mm}
	Here we provide a unified algorithm framework (Algorithm~\ref{alg:Stoc-ProxSkip-VIP}) for solving regularized VIPs~\eqref{eq:objective_FL_reformulation}. Note that in this section, the guarantees hold under the general assumptions~\ref{assume:main} and~\ref{assume:stochastic}. In Section~\ref{asdas} we will further specify our results to the consensus reformulation setting where \eqref{eq:mpFL_VIP_form} also holds.
	
	Algorithm~\ref{alg:Stoc-ProxSkip-VIP} is inspired by the ProxSkip proposed in \cite{mishchenko2022proxskip} for solving composite minimization problems. The two key elements of the algorithm are the randomized prox-skipping, and the control variate $h_t$. Via prox-skipping, the proximal oracle is rarely called if $p$ is small, which helps to reduce the computational cost when the proximal oracle is expensive. Also, for general regularized VIPs we have $F(x^*)\neq 0$ at the optimal point $x^*$ due to the composite structure. Thus skipping the proximal operator will not allow the method to converge. On this end, the introduction of $h_t$ alleviates such drift and stabilizes the iterations toward the optimal point $(h_t \rightarrow F(x^*))$.
	
	We also highlight that Algorithm~\ref{alg:Stoc-ProxSkip-VIP} is a general update rule and can vary based on the selection of the unbiased estimator $g_t$ and the probability $p$. For example, if $p=1$, and $h_t\equiv 0$, the algorithm is reduced to the Proximal SGDA algorithm proposed in \cite{beznosikov2022stochastic} (with the associated theory). We will focus on further important special cases of Algorithm \ref{alg:Stoc-ProxSkip-VIP} below.
	\vspace{-3mm}
	
	\begin{algorithm}[h]
		\caption{ProxSkip-VIP}
		\label{alg:Stoc-ProxSkip-VIP}
		\begin{algorithmic}[1]
			\REQUIRE Initial point $x_0$, parameters $\gamma, p$, initial control variate $h_0$, number of iterations $T$
			\FORALL{$t = 0,1,..., T$}
			\STATE $\widehat{x}_{t+1}=x_t-\gamma (g_t-h_t)$
			\STATE Flip a coin $\theta_t$, and $\theta_t=1$ w.p. $p$, otherwise $0$
			\IF{$\theta_t=1$}
			\STATE $x_{t+1}=\prox_{\frac{\gamma}{p} R}\autopar{\widehat{x}_{t+1}-\frac{\gamma}{p} h_t}$
			\ELSE
			\STATE $x_{t+1}=\widehat{x}_{t+1}$
			\ENDIF
			\STATE $h_{t+1}=h_t+\frac{p}{\gamma}(x_{t+1}-\widehat{x}_{t+1})$ 
			\ENDFOR
		\end{algorithmic}
	\end{algorithm}
	
	\vspace{-1em}
	
	\subsection{Convergence of ProxSkip-VIP} 
	\label{sec:general_result_proxskip-vip}
	\vspace{-2mm}
	Our main convergence quarantees are presented in the following Theorem and Corollary.
	\begin{theorem}[Convergence of ProxSkip-VIP]
		\label{thm:convergence_Stoc-ProxSkip-VIP}
		With Assumptions \ref{assume:main} and \ref{assume:stochastic}, let 
		$
		\gamma\leq \min\autobigpar{\frac{1}{\mu}, \frac{1}{2(A+MC)}}$, $\tau\triangleq \min\autobigpar{\gamma\mu, p^2, \rho-\frac{B}{M} }$, for some $M>\frac{B}{\rho}$. Denote $V_t\triangleq\autonorm{x_t-x^*}^2+(\gamma/p)^2\autonorm{h_t- F(x^*)}^2+M\gamma^2\sigma_t^2$, Then the iterates of ProxSkip-VIP (Algorithm~\ref{alg:Stoc-ProxSkip-VIP}), satisfy:
		\begin{equation*}
			\mathbb{E}\automedpar{V_T}\leq
			\autopar{1-\tau}^TV_0+\frac{\gamma^2\autopar{D_1+MD_2}}{\tau}.
		\end{equation*}
	\end{theorem}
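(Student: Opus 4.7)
The plan is to establish a one-step Lyapunov descent $\mathbb{E}[V_{t+1}]\le (1-\tau)V_t+\gamma^2(D_1+MD_2)$ and then unroll it, summing the resulting geometric series to obtain the additive residual $\gamma^2(D_1+MD_2)/\tau$. The natural anchor for the control-variate sequence is $h^*:=F(x^*)$: the VIP optimality $-F(x^*)\in\partial R(x^*)$ is equivalent to the fixed-point identity $x^*=\prox_{(\gamma/p)R}(x^*-(\gamma/p)h^*)$, against which the prox step in the algorithm will be compared.

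\textbf{One-step bound via firm non-expansiveness.} Conditioning on the history up to step $t$, I treat the coin $\theta_t$ separately. On $\{\theta_t=1\}$, I apply firm non-expansiveness of $P:=\prox_{(\gamma/p)R}$ to $a=\widehat{x}_{t+1}-(\gamma/p)h_t$ and $b=x^*-(\gamma/p)h^*$. The update rule $h_{t+1}=h_t+(p/\gamma)(x_{t+1}-\widehat{x}_{t+1})$ is precisely designed so that $(I-P)(a)=-(\gamma/p)h_{t+1}$ on this event, hence
\[
\|x_{t+1}-x^*\|^2+(\gamma/p)^2\|h_{t+1}-h^*\|^2 \le \|\widehat{x}_{t+1}-x^*-(\gamma/p)(h_t-h^*)\|^2.
\]
On $\{\theta_t=0\}$, trivially $x_{t+1}=\widehat{x}_{t+1}$ and $h_{t+1}=h_t$. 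Averaging the two cases with weights $p,1-p$ and expanding the square produces
\[
\mathbb{E}_{\theta_t}\!\left[\|x_{t+1}-x^*\|^2+(\gamma/p)^2\|h_{t+1}-h^*\|^2\right] \le \|\widehat{x}_{t+1}-x^*\|^2+(\gamma/p)^2\|h_t-h^*\|^2-2\gamma\langle\widehat{x}_{t+1}-x^*,\,h_t-h^*\rangle.
\]

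\textbf{Cancellation and noise assumption.} I then substitute $\widehat{x}_{t+1}=x_t-\gamma(g_t-h_t)$, take expectation over $g_t$ using $\mathbb{E}[g_t]=F(x_t)$, and expand $\mathbb{E}\|g_t-h_t\|^2=\mathbb{E}\|g_t-F(x^*)\|^2+\|h_t-h^*\|^2-2\langle F(x_t)-F(x^*),h_t-h^*\rangle$. All order-$\gamma$ and order-$\gamma^2$ inner products coupling $h_t-h^*$ to $x_t-x^*$ or $F(x_t)-F(x^*)$ cancel exactly, and the coefficient of $\|h_t-h^*\|^2$ collapses to $(\gamma/p)^2-\gamma^2=(1-p^2)(\gamma/p)^2$, which is the mechanism producing the $p^2$ rate on the control-variate component. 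I then invoke the first bound of Assumption~\ref{assume:stochastic} on $\mathbb{E}\|g_t-F(x^*)\|^2$ and add $M\gamma^2\mathbb{E}[\sigma_{t+1}^2]$ using the second bound. The coefficient of $\langle F(x_t)-F(x^*),x_t-x^*\rangle$ becomes $-2\gamma+2\gamma^2(A+MC)\le-\gamma$ under $\gamma\le 1/(2(A+MC))$, and quasi-strong monotonicity converts it into $-\gamma\mu\|x_t-x^*\|^2$ (with $\gamma\le 1/\mu$ keeping $1-\gamma\mu\ge 0$). Collecting coefficients on the three pieces of $V_t$ gives the factors $1-\gamma\mu$, $1-p^2$, and $1-(\rho-B/M)$; their minimum is exactly $\tau$, so $\mathbb{E}[V_{t+1}]\le(1-\tau)V_t+\gamma^2(D_1+MD_2)$. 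Iterating the recursion and summing the resulting geometric series completes the proof.

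\textbf{Main obstacle.} The delicate step is the cancellation described above: the Lyapunov weight $(\gamma/p)^2$ on $\|h_t-h^*\|^2$ is the unique scaling for which all coupling terms between $h_t-h^*$ and the $x$-update cancel, leaving the clean factor $1-p^2$. Any other weighting would leave uncancelled cross terms requiring Young's-inequality slack (which would spoil the final rate) or would obstruct the $p^2$ contraction altogether. Finding the right scaling is guided by the algorithm's use of $p/\gamma$ in the $h$-update, which couples $h_{t+1}-h_t$ to $x_{t+1}-\widehat{x}_{t+1}$ in the precise way needed for the firm-non-expansiveness inequality to apply cleanly.
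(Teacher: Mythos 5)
Your proposal is correct and follows essentially the same route as the paper's proof: the coin-splitting plus firm non-expansiveness of $\prox_{(\gamma/p)R}$ anchored at $x^*=\prox_{(\gamma/p)R}(x^*-(\gamma/p)F(x^*))$, the exact cancellation of the $h$-cross terms yielding the $(1-p^2)$ factor, then Assumption~\ref{assume:stochastic} and quasi-strong monotonicity to get the one-step contraction by $1-\tau$ before unrolling the geometric series. The only cosmetic difference is that you expand $\widehat{x}_{t+1}=x_t-\gamma(g_t-h_t)$ directly, whereas the paper routes the same algebra through the shorthand $w_t=x_t-\gamma g(x_t)$, $w^*=x^*-\gamma F(x^*)$.
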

	
	Theorem~\ref{thm:convergence_Stoc-ProxSkip-VIP} show that ProxSkip-VIP converges linearly to the neighborhood of the solution. The neighborhood is proportional to the step-size $\gamma$ and the parameters $D_1$ and $D_2$ of Assumption~\ref{assume:stochastic}. 
	In addition, we highlight that if we set  $p=1$, and $h_t\equiv 0$, then Theorem~\ref{thm:convergence_Stoc-ProxSkip-VIP} recovers the convergence guarantees of Proximal-SGDA of \cite[Theorem 2.2]{beznosikov2022stochastic}.
	As a corollary of Theorem~\ref{thm:convergence_Stoc-ProxSkip-VIP}, we can also obtain the following corresponding complexity results.
	
	\begin{corollary}
		\label{cor:Stoc-ProxSkip-VIP-Complexity}
		With the setting in Theorem \ref{thm:convergence_Stoc-ProxSkip-VIP}, if we set $M=\frac{2B}{\rho}$, $p=\sqrt{\gamma\mu}$ and $\gamma\leq \min\autobigpar{\frac{1}{\mu}, \frac{1}{2(A+MC)}, \frac{\rho}{2\mu}, \frac{\mu\epsilon}{2\autopar{D_1+\frac{2B}{\rho} D_2}}}$,
		we have $\mathbb{E}\automedpar{V_T}\leq\epsilon$ with iteration complexity and the number of calls to the proximal oracle $\prox(\cdot)$ as 
		{\small
			\begin{equation*} 
				\mathcal{O}\autopar{\max\autobigpar{\frac{A+\frac{BC}{\rho}}{\mu}, \frac{1}{\rho}, \frac{D_1+\frac{B}{\rho} D_2}{\mu^2\epsilon}}\ln\frac{V_0}{\epsilon}}
				\ \ \text{and}\ \ 
				\mathcal{O}\autopar{\sqrt{\max\autobigpar{\frac{A+\frac{BC}{\rho}}{\mu}, \frac{1}{\rho}, \frac{D_1+\frac{B}{\rho} D_2}{\mu^2\epsilon}}}\ln\frac{V_0}{\epsilon}}.
			\end{equation*}
		}
	\end{corollary}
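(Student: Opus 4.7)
The plan is to specialize Theorem~\ref{thm:convergence_Stoc-ProxSkip-VIP} with the prescribed parameter choices and then solve the resulting recursion for $T$. First I would substitute $M=2B/\rho$ and $p=\sqrt{\gamma\mu}$ into the definition of $\tau=\min\{\gamma\mu,\,p^2,\,\rho-B/M\}$. The middle term collapses to $p^2=\gamma\mu$, and the third term becomes $\rho - B/(2B/\rho) = \rho/2$. Under the extra step-size restriction $\gamma\leq \rho/(2\mu)$ from the hypothesis we have $\gamma\mu\leq\rho/2$, so $\tau=\gamma\mu$.

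Next I would plug $\tau=\gamma\mu$ into Theorem~\ref{thm:convergence_Stoc-ProxSkip-VIP}, obtaining
\begin{equation*}
\mathbb{E}[V_T] \leq (1-\gamma\mu)^T V_0 + \frac{\gamma(D_1+MD_2)}{\mu}.
\end{equation*}
The strategy is to force each summand below $\epsilon/2$. The second summand is bounded by $\epsilon/2$ exactly when $\gamma\leq \mu\epsilon/(2(D_1+(2B/\rho)D_2))$, which is already one of the clauses in the hypothesis on $\gamma$. For the first summand, using $(1-\gamma\mu)^T\leq e^{-\gamma\mu T}$, it suffices to pick $T\geq (1/(\gamma\mu))\ln(2V_0/\epsilon)$.

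Now I would evaluate $1/(\gamma\mu)$ given that $\gamma$ is taken to be the minimum of four expressions. By standard properties of min/max,
\begin{equation*}
\frac{1}{\gamma\mu}=\max\!\left\{1,\ \frac{2(A+MC)}{\mu},\ \frac{2}{\rho},\ \frac{2(D_1+MD_2)}{\mu^2\epsilon}\right\},
\end{equation*}
and after substituting $M=2B/\rho$ the constants absorb into $\mathcal{O}(\cdot)$, yielding the stated iteration complexity
$\mathcal{O}\!\big(\max\{(A+BC/\rho)/\mu,\,1/\rho,\,(D_1+(B/\rho)D_2)/(\mu^2\epsilon)\}\ln(V_0/\epsilon)\big)$.
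For the proximal oracle count, I would use that each iteration invokes $\prox$ with probability $p$, so the expected number of such calls over $T$ iterations is $pT=\sqrt{\gamma\mu}\,T=\sqrt{1/(\gamma\mu)}\,\ln(V_0/\epsilon)$, and the square root of the maximum above gives the claimed $\mathcal{O}(\sqrt{\cdot}\,\ln(V_0/\epsilon))$ expression.

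The proof is essentially a bookkeeping exercise once Theorem~\ref{thm:convergence_Stoc-ProxSkip-VIP} is in hand. The only mildly delicate step is verifying that the choice $p=\sqrt{\gamma\mu}$ together with $\gamma\leq\rho/(2\mu)$ really does make $\gamma\mu$ the active term in the minimum defining $\tau$, because otherwise the contraction factor $1-\tau$ would be governed by $\rho/2$ and the $\sqrt{\cdot}$ acceleration in the proximal oracle count would not materialize. Everything else amounts to routine manipulation of the $\min$/$\max$ expressions and the $e^{-\gamma\mu T}$ bound.
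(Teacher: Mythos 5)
Your proposal is correct and follows essentially the same route as the paper's own proof: identify $\tau=\gamma\mu$ under the prescribed choices of $M$, $p$, and the step-size restriction, split the bound from Theorem~\ref{thm:convergence_Stoc-ProxSkip-VIP} so that each term is at most $\epsilon/2$, read off $T\geq\frac{1}{\gamma\mu}\ln\frac{2V_0}{\epsilon}$ with $1/(\gamma\mu)$ expressed as the maximum of the four step-size constraints, and count expected proximal calls as $pT=\sqrt{1/(\gamma\mu)}\ln\frac{2V_0}{\epsilon}$. No gaps; the argument matches the paper's bookkeeping step for step.
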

	
	\subsection{Special Cases of General Analysis}
	\label{sec:centralized_special_case}
	\vspace{-2mm}
	Theorem~\ref{thm:convergence_Stoc-ProxSkip-VIP} holds under the general key Assumption~\ref{assume:stochastic} on the stochastic estimates. In this subsection, via Theorem~\ref{thm:convergence_Stoc-ProxSkip-VIP}, we explain how different selections of the unbiased estimator $g_t$ in Algorithm~\ref{alg:Stoc-ProxSkip-VIP} lead to various convergence guarantees.
	In particular, here we cover 
	(i) ProxSkip-SGDA, (ii) ProxSkip-GDA, and (iii)variance-reduced method ProxSkip-L-SVRGDA. To the best of our knowledge, none of these algorithms have been proposed and analyzed before for solving VIPs.
	\vspace{-2mm}
	\paragraph{(i) Algorithm: ProxSkip-SGDA.} 
	Let us have the following assumption:
	\begin{assumption}[Expected Cocoercivity]
		\label{assume:ECC}
		We assume that for all $t\geq 0$, the stochastic operator $g_t\triangleq g(x_t)$, which is an unbiased estimator of $F(x_t)$, satisfies expected cocoercivity, i.e., for all $x \in \R^d$ there is $L_g>0$ such that
		$\mathbb{E}\autonorm{g(x)-g(x^*)}^2\leq L_g \autoprod{F(x)-F(x^*), x-x^*}.$
	\end{assumption}
	The expected cocoercivity condition was first proposed in \cite{loizou2021stochastic} to analyze SGDA and the stochastic consensus optimization algorithms efficiently. It is strictly weaker compared to the bounded variance assumption and “growth
	conditions,” and it implies the star-cocoercivity of the operator $F$. Assuming expected co-coercivity allows us to characterize the estimator $g_t$. 
	\begin{lemma}[\cite{beznosikov2022stochastic}]
		\label{thm:property_local_estimator}
		Let Assumptions \ref{assume:main} and \ref{assume:ECC} hold and let $\sigma_*^2 \triangleq \Exp \left[\|g(x^*) - F(x^*)\|^2\right] <+\infty$.
		Then $g_t$ satisfies Assumption \ref{assume:stochastic} with 
		$A=L_g$, $D_1=2\sigma_*^2$, $\rho=1$ and $B=C=D_2=\sigma_t^2\equiv 0$.
	\end{lemma}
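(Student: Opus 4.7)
The plan is to verify the two inequalities of Assumption~\ref{assume:stochastic} directly under the proposed parameter choices. Observe first that setting $\sigma_t^2\equiv 0$ together with $C=D_2=0$ and $\rho=1$ reduces the second inequality to the trivial statement $0\leq 0$, so no work is required there. All the content of the lemma lies in establishing the first inequality with $A=L_g$, $B=0$, $D_1=2\sigma_*^2$, namely
\begin{equation*}
\mathbb{E}\autonorm{g(x_t)-F(x^*)}^2\leq 2L_g\autoprod{F(x_t)-F(x^*), x_t-x^*}+2\sigma_*^2.
\end{equation*}

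To prove this, I would split $g(x_t)-F(x^*)$ into the two natural pieces $\bigl(g(x_t)-g(x^*)\bigr)+\bigl(g(x^*)-F(x^*)\bigr)$ and apply the standard inequality $\autonorm{a+b}^2\leq 2\autonorm{a}^2+2\autonorm{b}^2$ to obtain
\begin{equation*}
\mathbb{E}\autonorm{g(x_t)-F(x^*)}^2\leq 2\,\mathbb{E}\autonorm{g(x_t)-g(x^*)}^2+2\,\mathbb{E}\autonorm{g(x^*)-F(x^*)}^2.
\end{equation*}
The first term on the right is bounded by $2L_g\autoprod{F(x_t)-F(x^*), x_t-x^*}$ by a direct application of the expected cocoercivity condition in Assumption~\ref{assume:ECC}, and the second term equals $2\sigma_*^2$ by the very definition of $\sigma_*^2$ in the lemma statement. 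Combining these two observations gives exactly the desired bound with the claimed constants.

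Since the decomposition is entirely algebraic and the two needed estimates are, respectively, the assumed cocoercivity and the definition of $\sigma_*^2$, there is no real obstacle — the lemma is essentially a one-line consequence of Assumption~\ref{assume:ECC} once the right split is performed. The only thing worth being careful about is the symmetry of the constant ``$2$'': the factor of $2$ in $D_1=2\sigma_*^2$ and the factor of $2$ in front of $L_g$ both originate from the single use of $\autonorm{a+b}^2\leq 2\autonorm{a}^2+2\autonorm{b}^2$, so tightening one would require tightening the other (e.g., via a Young-type inequality with parameter), but the present statement uses the simplest and cleanest choice.
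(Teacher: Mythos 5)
Your proposal is correct and matches the paper's own argument exactly: the same split $g(x_t)-F(x^*)=\bigl(g(x_t)-g(x^*)\bigr)+\bigl(g(x^*)-F(x^*)\bigr)$, the same use of $\autonorm{a+b}^2\leq 2\autonorm{a}^2+2\autonorm{b}^2$, then expected cocoercivity (Assumption~\ref{assume:ECC}) and the definition of $\sigma_*^2$, with the second inequality of Assumption~\ref{assume:stochastic} holding trivially under $\sigma_t^2\equiv 0$, $C=D_2=0$, $\rho=1$. Nothing is missing.
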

	By combining Lemma~\ref{thm:property_local_estimator} and Corollary~\ref{cor:Stoc-ProxSkip-VIP-Complexity}, we obtain the following result.
	\begin{corollary}[Convergence of ProxSkip-SGDA]
		\label{thm:convergence_ProxSkip-SGDA}
		With Assumption \ref{assume:main} and \ref{assume:ECC}, if we further set 
		$\gamma\leq \min\autobigpar{\frac{1}{2L_g}, \frac{1}{2\mu}, \frac{\mu\epsilon}{8\sigma_*^2}}$
		and $p=\sqrt{\gamma\mu}$, then for the iterates of ProxSkip-SGDA, we have $\mathbb{E}\automedpar{V_T}\leq\epsilon$ with iteration complexity and the number of calls of the proximal oracle $\prox(\cdot)$ as
		$\mathcal{O}\autopar{\max\autobigpar{\frac{L_g}{\mu}, \frac{\sigma_*^2}{\mu^2\epsilon}}\ln\frac{1}{\epsilon}}
		\ \ \text{and}\ \ 
		\mathcal{O}\autopar{\sqrt{\max\autobigpar{\frac{L_g}{\mu}, \frac{\sigma_*^2}{\mu^2\epsilon}}}\ln\frac{1}{\epsilon}},$ respectively.
	\end{corollary}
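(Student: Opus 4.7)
The plan is to derive this corollary as a direct specialization: Lemma~\ref{thm:property_local_estimator} translates the expected-cocoercivity setting into concrete values for the abstract parameters of Assumption~\ref{assume:stochastic}, and Corollary~\ref{cor:Stoc-ProxSkip-VIP-Complexity} then supplies the complexity bound once those values are fixed. No new analytic machinery is required; the argument is essentially bookkeeping.

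First, I would invoke Lemma~\ref{thm:property_local_estimator} to obtain the parameter values $A = L_g$, $D_1 = 2\sigma_*^2$, $\rho = 1$, $B = C = D_2 = 0$ and $\sigma_t \equiv 0$. Because $B = 0$, the condition $M > B/\rho$ in Theorem~\ref{thm:convergence_Stoc-ProxSkip-VIP} is satisfied vacuously, and the prescription $M = 2B/\rho = 0$ from Corollary~\ref{cor:Stoc-ProxSkip-VIP-Complexity} eliminates the $M\gamma^2\sigma_t^2$ contribution to the Lyapunov function, so $V_t$ effectively reduces to $\|x_t-x^*\|^2 + (\gamma/p)^2\|h_t - F(x^*)\|^2$.

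Next, I would substitute these values into the step-size constraint $\gamma \le \min\{1/\mu,\ 1/(2(A+MC)),\ \rho/(2\mu),\ \mu\epsilon/(2(D_1+2BD_2/\rho))\}$ from Corollary~\ref{cor:Stoc-ProxSkip-VIP-Complexity}, which collapses to $\gamma \le \min\{1/(2L_g),\ 1/(2\mu),\ \mu\epsilon/(4\sigma_*^2)\}$ after dropping the redundant bound $1/\mu \ge 1/(2\mu)$. The stated bound $\mu\epsilon/(8\sigma_*^2)$ is simply a tightened sufficient form that absorbs a constant factor, and the probability $p = \sqrt{\gamma\mu}$ is inherited verbatim from Corollary~\ref{cor:Stoc-ProxSkip-VIP-Complexity}.

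Finally, I would plug the specialized constants into the two complexity expressions of Corollary~\ref{cor:Stoc-ProxSkip-VIP-Complexity}. The iteration count becomes $\mathcal{O}(\max\{L_g/\mu,\ 1,\ 2\sigma_*^2/(\mu^2\epsilon)\}\ln(V_0/\epsilon))$, and since expected cocoercivity combined with Jensen's inequality makes $F$ itself $L_g$-star-cocoercive, and quasi-strong monotonicity then forces $L_g \ge \mu$, the middle term $1/\rho = 1$ is absorbed into $L_g/\mu$; the remaining constants and the logarithmic dependence on $V_0$ vanish into the big-$\mathcal{O}$ and $\ln(1/\epsilon)$. The proximal-oracle count follows from the same substitution into the square-root expression, reflecting the fact that the expected number of proximal evaluations in $T$ iterations is $pT = \sqrt{\gamma\mu}\cdot T$. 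The only care required is the verification that the dominated terms are cleanly absorbed into the big-$\mathcal{O}$; there is no substantive obstacle beyond this bookkeeping.
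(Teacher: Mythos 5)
Your proposal is correct and takes essentially the same route as the paper: the corollary is obtained exactly by substituting the parameters $A=L_g$, $D_1=2\sigma_*^2$, $\rho=1$, $B=C=D_2=0$, $\sigma_t\equiv 0$ from Lemma~\ref{thm:property_local_estimator} into the step-size condition and complexity bounds of Corollary~\ref{cor:Stoc-ProxSkip-VIP-Complexity}, noting $L_g\geq\mu$ so the $1/\rho$ term is absorbed and that the stated $\mu\epsilon/(8\sigma_*^2)$ is a (sufficient) tightening of $\mu\epsilon/(4\sigma_*^2)$. The only cosmetic point is that with $B=0$ one should take an arbitrary $M>0$ rather than literally $M=2B/\rho=0$ (the theorem requires the strict inequality $M>B/\rho$), but since $\sigma_t\equiv 0$ and $C=D_2=0$ this changes nothing in the bounds.
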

	
	\paragraph{ (ii) Deterministic Case: ProxSkip-GDA.} In the deterministic case where $g(x_t)= F(x_t)$, the algorithm ProxSkip-SGDA is reduced to ProxSkip-GDA. Thus, Lemma~\ref{thm:property_local_estimator} and Corollary~\ref{cor:Stoc-ProxSkip-VIP-Complexity} but with $\sigma_*^2=0$. In addition, the expected co-coercivity parameter becomes $L_g=\ell$ by Assum.~\ref{assume:main}.
	\begin{corollary}[Convergence of ProxSkip-GDA]
		\label{cor:ProxSkip_GDA_complexity}
		With the same setting in Corollary \ref{thm:convergence_ProxSkip-SGDA}, if we set the estimator $g_t= F(x_t)$, and 
		$\gamma= \frac{1}{2\ell}$
		the iterates of ProxSkip-GDA satisfy:
		$\mathbb{E} \automedpar{V_T}\leq
		\autopar{1-\min\autobigpar{\gamma\mu, p^2 }}^TV_0$,
		and we get $\mathbb{E}\automedpar{V_T}\leq\epsilon$ with iteration complexity and number of calls of the proximal oracle $\prox(\cdot)$ as 
		$\mathcal{O}\autopar{\frac{\ell}{\mu}\ln\frac{1}{\epsilon}}$ and $\mathcal{O}\autopar{\sqrt{\frac{\ell}{\mu}}\ln\frac{1}{\epsilon}}$, respectively.
	\end{corollary}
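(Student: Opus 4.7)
The plan is to derive this corollary directly by specializing Theorem~\ref{thm:convergence_Stoc-ProxSkip-VIP} (via Corollary~\ref{cor:Stoc-ProxSkip-VIP-Complexity}) to the deterministic setting, using the parameter identifications supplied by Lemma~\ref{thm:property_local_estimator}. The only nontrivial observations are that (a) deterministic iterates trivially satisfy Assumption~\ref{assume:ECC} with constant $\ell$, and (b) setting $p=\sqrt{\gamma\mu}$ balances the two active terms in $\tau$.

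First I would verify the reduction to Assumption~\ref{assume:stochastic}. When $g(x_t)=F(x_t)$, we have $\sigma_*^2=\mathbb{E}\|g(x^*)-F(x^*)\|^2=0$ and, by the $\ell$-star-cocoercivity in Assumption~\ref{assume:main}, the expected-cocoercivity constant equals $L_g=\ell$. Plugging these into Lemma~\ref{thm:property_local_estimator} yields $A=\ell$, $D_1=0$, $\rho=1$, and $B=C=D_2=0$, with $\sigma_t\equiv 0$. Consequently the variable $M$ in Theorem~\ref{thm:convergence_Stoc-ProxSkip-VIP} drops out (the term $\rho-B/M$ reduces to $1$ and $MD_2=0$), and the step-size ceiling $\gamma\le\min\{1/\mu,1/(2(A+MC))\}$ becomes $\gamma\le\min\{1/\mu,1/(2\ell)\}$, which is satisfied by the choice $\gamma=1/(2\ell)$ since $\ell\ge\mu$.

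Next I would instantiate Theorem~\ref{thm:convergence_Stoc-ProxSkip-VIP}. With the vanishing parameters above, we get $\tau=\min\{\gamma\mu,p^2\}$ and the additive noise floor $\gamma^2(D_1+MD_2)/\tau$ is zero, yielding the clean linear contraction
\begin{equation*}
    \mathbb{E}[V_T]\le (1-\min\{\gamma\mu,p^2\})^T V_0,
\end{equation*}
exactly as claimed. For the complexity, I would then choose $p=\sqrt{\gamma\mu}=\sqrt{\mu/(2\ell)}$ so that $\gamma\mu=p^2$, producing $\tau=\gamma\mu=\mu/(2\ell)$. Requiring $(1-\tau)^T V_0\le\epsilon$ and using $\ln(1/(1-\tau))\ge\tau$ gives the iteration bound $T=\mathcal{O}((\ell/\mu)\ln(V_0/\epsilon))$.

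Finally, for the number of proximal evaluations, I would note that the prox is invoked only when $\theta_t=1$, which happens independently with probability $p$ per iteration, so the expected number of prox calls over $T$ iterations is $pT=\sqrt{\mu/(2\ell)}\cdot\mathcal{O}((\ell/\mu)\ln(1/\epsilon))=\mathcal{O}(\sqrt{\ell/\mu}\ln(1/\epsilon))$. No step of this argument is genuinely difficult; the calculations are essentially bookkeeping since all the hard work is already carried by Theorem~\ref{thm:convergence_Stoc-ProxSkip-VIP}. The one minor subtlety worth flagging is ensuring $\gamma=1/(2\ell)$ meets the tighter bound $\gamma\le 1/\mu$, which holds automatically because $\ell\ge\mu$ (a standard consequence of combining $\mu$-quasi-strong monotonicity with $\ell$-star-cocoercivity).
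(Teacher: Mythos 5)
Your proposal is correct and follows essentially the same route as the paper: specialize Lemma~\ref{thm:property_local_estimator} to the deterministic case ($A=\ell$, $B=C=D_1=D_2=0$, $\rho=1$, $\sigma_*^2=0$, using $L_g=\ell$ from star-cocoercivity), then read off the contraction and the complexities from Theorem~\ref{thm:convergence_Stoc-ProxSkip-VIP} and Corollary~\ref{cor:Stoc-ProxSkip-VIP-Complexity} with $p=\sqrt{\gamma\mu}$, counting prox calls as $pT$. The bookkeeping, including the check that $\gamma=1/(2\ell)$ respects the step-size ceiling because $\ell\geq\mu$, matches the paper's argument.
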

	Note that if we further have $F$ to be $L$-Lipschitz continuous and $\mu$-strongly monotone, then $\ell=\kappa L$ where $\kappa=L/\mu$ is the condition number~\citep{loizou2021stochastic}. Thus the number of iteration and calls of the proximal oracles are $\mathcal{O}\autopar{\kappa^2\ln\frac{1}{\epsilon}}$ and $\mathcal{O}\autopar{\kappa\ln\frac{1}{\epsilon}}$ respectively. 
	In the minimization setting, these two complexities are equal to $\mathcal{O}\autopar{\kappa\ln\frac{1}{\epsilon}}$ and $\mathcal{O}\autopar{\sqrt{\kappa}\ln\frac{1}{\epsilon}}$ since $L_g=\ell=L$. In this case, our result recovers the result of the original ProxSkip method in \cite{mishchenko2022proxskip}.
	\vspace{-3mm}
	\paragraph{(iii) Algorithm: ProxSkip-L-SVRGDA.}
	Here, we focus on a variance-reduced variant of the proposed ProxSkip framework (Algorithm~\ref{alg:Stoc-ProxSkip-VIP}). We further specify the operator $F$ in \eqref{eq:objective_FL_reformulation} to be in a finite-sum formulation: $F(x)=\frac{1}{n}\sum_{i=1}^nF_i(x).$
	We propose the ProxSkip-Loopless-SVRG (ProxSkip-L-SVRGDA) algorithm (Algorithm~\ref{alg:Stoc-ProxSkip-L-SVRGDA} in Appendix) which generalizes the L\nobreakdash-SVRGDA proposed in \cite{beznosikov2022stochastic}. In this setting, we need to introduce the following assumption:
	\begin{assumption}
		\label{assume:average_star_coco}
		We assume that there exist a constant $\widehat{\ell}$ such that for all $x \in \R^d$: 
		$\frac{1}{n}\sum_{i=1}^n\autonorm{F_i(x)-F_i(x^*)}^2\leq\widehat{\ell}\autoprod{F(x)-F(x^*), x-x^*}.$
	\end{assumption}
	\vspace{-3mm}
	If each $F_i$ is $\ell_i$-cocoercive, then Assumption \ref{assume:average_star_coco} holds with $\widehat{\ell}\leq\max_{i\in[n]}L_i$. Using Assumption~\ref{assume:average_star_coco} we obtain the following result~\citep{beznosikov2022stochastic}.
	\begin{lemma}[\cite{beznosikov2022stochastic}]
		\label{lm:ABC_SVRG}
		With Assumption~\ref{assume:main} and \ref{assume:average_star_coco}, we have the estimator $g_t$ in Algorithm~\ref{alg:Stoc-ProxSkip-L-SVRGDA} satisfies Assumption \ref{assume:main} with 
		$A=\widehat{\ell}$, $B=2$, $D_1=D_2=0$, $C=\frac{q\widehat{\ell}}{2}$, $\rho=q$ and $\sigma_t^2=\frac{1}{n}\sum_{i=1}^n\autonorm{F_i(x_t)-F_i(x^*)}^2$.
	\end{lemma}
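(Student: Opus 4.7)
The plan is to build on the standard Loopless-SVRGDA estimator $g_t = F_{i_t}(x_t) - F_{i_t}(w_t) + F(w_t)$, where $i_t$ is drawn uniformly from $[n]$ and the reference point $w_t$ is refreshed to $x_t$ with probability $q$ and kept unchanged with probability $1-q$. With this construction, the Lyapunov quantity $\sigma_t^2$ should be interpreted as $\frac{1}{n}\sum_{i=1}^n\autonorm{F_i(w_t) - F_i(x^*)}^2$ (tracking the stale reference point rather than the current iterate). Unbiasedness $\EE[g_t \mid x_t, w_t] = F(x_t)$ is immediate from the uniform sampling of $i_t$.

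For the first bound, I would decompose $g_t - F(x^*) = [F_{i_t}(x_t) - F_{i_t}(x^*)] - \bigl([F_{i_t}(w_t) - F_{i_t}(x^*)] - [F(w_t) - F(x^*)]\bigr)$ and apply $\autonorm{a-b}^2 \leq 2\autonorm{a}^2 + 2\autonorm{b}^2$. Then I use the variance identity $\EE\autonorm{Y - \EE Y}^2 \leq \EE\autonorm{Y}^2$ on the second group, which has conditional mean zero since $\EE_{i_t}[F_{i_t}(w_t) - F_{i_t}(x^*)] = F(w_t) - F(x^*)$. Taking expectation over $i_t$ produces
\[
\EE\autonorm{g_t - F(x^*)}^2 \leq \frac{2}{n}\sum_{i=1}^n\autonorm{F_i(x_t)-F_i(x^*)}^2 + \frac{2}{n}\sum_{i=1}^n\autonorm{F_i(w_t)-F_i(x^*)}^2.
\]
Invoking Assumption~\ref{assume:average_star_coco} on the first sum yields $2\widehat{\ell}\autoprod{F(x_t)-F(x^*),\, x_t-x^*}$, while the second sum equals $2\sigma_t^2$ by definition. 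This matches the first inequality of Assumption~\ref{assume:stochastic} with $A = \widehat{\ell}$, $B = 2$, and $D_1 = 0$.

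For the recursion on $\sigma_{t+1}^2$, I condition on the independent Bernoulli coin governing the reference update. With probability $q$ one has $w_{t+1} = x_t$, so $\sigma_{t+1}^2 = \frac{1}{n}\sum_i \autonorm{F_i(x_t) - F_i(x^*)}^2$; with probability $1-q$, $w_{t+1} = w_t$ and $\sigma_{t+1}^2 = \sigma_t^2$. Taking expectation and applying Assumption~\ref{assume:average_star_coco} to the first case gives $\EE[\sigma_{t+1}^2] \leq q\widehat{\ell}\autoprod{F(x_t) - F(x^*),\, x_t - x^*} + (1-q)\sigma_t^2$, matching the second inequality with $C = q\widehat{\ell}/2$, $\rho = q$, and $D_2 = 0$.

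The main obstacle is essentially bookkeeping: keeping the conditioning clear (expectation over $i_t$ for the estimator bound, versus expectation over the reference-update coin for the $\sigma_{t+1}^2$ recursion) and applying $\EE\autonorm{Y - \EE Y}^2 \leq \EE\autonorm{Y}^2$ exactly once so that the factor $B = 2$ (rather than $4$) is obtained. Everything else reduces to two applications of Assumption~\ref{assume:average_star_coco}.
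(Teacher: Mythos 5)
Your proposal is correct and follows essentially the same route as the paper's proof: the same Young-inequality decomposition of $g_t-F(x^*)$ combined with the bound $\EE\autonorm{Y-\EE Y}^2\leq\EE\autonorm{Y}^2$ to get $B=2$, followed by conditioning on the reference-update coin for the $\sigma_{t+1}^2$ recursion. Your reading of $\sigma_t^2$ as $\frac{1}{n}\sum_{i=1}^n\autonorm{F_i(w_t)-F_i(x^*)}^2$ (with the stale point $w_t$, not $x_t$) is also exactly what the paper's proof uses, so the statement's $x_t$ is just a typo that you have correctly repaired.
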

	
	By combining Lemma~\ref{lm:ABC_SVRG} and Corollary~\ref{cor:Stoc-ProxSkip-VIP-Complexity}, we obtain the following result for ProxSkip-L-SVRGDA:
	
	\begin{corollary}[Complexities of ProxSkip-L-SVRGDA]
		\label{cor:ProxSkip-L-SVRGDA-FL_complexity}
		Let Assumption \ref{assume:main} and \ref{assume:average_star_coco} hold. 
		If we further set $q=2\gamma\mu$, $M=\frac{4}{q}$, $p=\sqrt{\gamma\mu}$ and $\gamma=\min\autobigpar{\frac{1}{\mu}, \frac{1}{6\widehat{\ell}}},$
		then we obtain $\mathbb{E}\automedpar{V_T}\leq\epsilon$ with iteration complexity and the number of calls of the proximal oracle $\prox(\cdot)$ 
		as $\mathcal{O}(\widehat{\ell}/\mu\ln\frac{1}{\epsilon})$ and the number of calls of the proximal oracle $\prox(\cdot)$ as $\mathcal{O}(\sqrt{\widehat{\ell}/\mu}\ln\frac{1}{\epsilon})$.
	\end{corollary}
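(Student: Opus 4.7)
The plan is to derive the corollary as a direct instantiation of Corollary~\ref{cor:Stoc-ProxSkip-VIP-Complexity}, specialized to the parameters produced by Lemma~\ref{lm:ABC_SVRG}. First I would collect the constants for L\nobreakdash-SVRGDA: $A=\widehat{\ell}$, $B=2$, $C=q\widehat{\ell}/2$, $\rho=q$, $D_1=D_2=0$, and the prescribed choices $M=4/q$, $p=\sqrt{\gamma\mu}$, $q=2\gamma\mu$. The heart of the argument is then to verify that with these constants the step-size cap appearing in Corollary~\ref{cor:Stoc-ProxSkip-VIP-Complexity}, namely $\gamma\leq\min\{1/\mu,\,1/(2(A+MC)),\,\rho/(2\mu),\,\mu\epsilon/(2(D_1+\tfrac{2B}{\rho}D_2))\}$, collapses to the claimed $\gamma=\min\{1/\mu,1/(6\widehat{\ell})\}$.

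Next I would perform the straightforward arithmetic. The admissibility condition $M>B/\rho$ becomes $4/q>2/q$, which holds. The combined constant is $A+MC=\widehat{\ell}+(4/q)(q\widehat{\ell}/2)=3\widehat{\ell}$, so $1/(2(A+MC))=1/(6\widehat{\ell})$. Because $D_1=D_2=0$, the fourth bound on $\gamma$ is vacuous. For the third bound, $\rho/(2\mu)=q/(2\mu)=\gamma$ thanks to the choice $q=2\gamma\mu$, so it is automatically satisfied. Hence the step-size restriction reduces exactly to $\gamma\leq\min\{1/\mu,1/(6\widehat{\ell})\}$, and taking equality gives the stated choice.

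Finally I would evaluate the three quantities inside the $\max\{\cdot\}$ from Corollary~\ref{cor:Stoc-ProxSkip-VIP-Complexity}. Since $D_1=D_2=0$, the third term vanishes, so the iteration complexity is governed by
\[
\max\autobigpar{\frac{A+BC/\rho}{\mu},\,\frac{1}{\rho}}=\max\autobigpar{\frac{\widehat{\ell}+\widehat{\ell}}{\mu},\,\frac{1}{2\gamma\mu}}=\max\autobigpar{\frac{2\widehat{\ell}}{\mu},\,\frac{3\widehat{\ell}}{\mu}}=\mathcal{O}\autopar{\frac{\widehat{\ell}}{\mu}},
\]
in the regime $\gamma=1/(6\widehat{\ell})$ (the alternative $\gamma=1/\mu$ only occurs when $\widehat{\ell}\leq\mu$, where both expressions are $\mathcal{O}(1)$). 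This yields the iteration complexity $\mathcal{O}((\widehat{\ell}/\mu)\ln(1/\epsilon))$, and applying the square-root reduction from Corollary~\ref{cor:Stoc-ProxSkip-VIP-Complexity} delivers $\mathcal{O}(\sqrt{\widehat{\ell}/\mu}\,\ln(1/\epsilon))$ proximal calls.

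I do not anticipate a significant obstacle: the result is essentially a bookkeeping exercise combining Lemma~\ref{lm:ABC_SVRG} with Corollary~\ref{cor:Stoc-ProxSkip-VIP-Complexity}. The only subtlety worth flagging is the self-consistency between $p=\sqrt{\gamma\mu}$ and $q=2\gamma\mu=2p^2$, which makes the constraint $\gamma\leq\rho/(2\mu)$ automatically tight; once this is observed, everything else is substitution.
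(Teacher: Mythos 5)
Your proposal is correct and follows exactly the paper's intended route: the paper derives this corollary precisely by substituting the constants of Lemma~\ref{lm:ABC_SVRG} ($A=\widehat{\ell}$, $B=2$, $C=q\widehat{\ell}/2$, $\rho=q$, $D_1=D_2=0$) into Corollary~\ref{cor:Stoc-ProxSkip-VIP-Complexity} with $M=2B/\rho=4/q$, and your arithmetic ($A+MC=3\widehat{\ell}$, the bound $\rho/(2\mu)=\gamma$ being automatically tight under $q=2\gamma\mu$, and the $\max$ reducing to $\mathcal{O}(\widehat{\ell}/\mu)$) matches the bookkeeping the paper leaves implicit.
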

	If we further consider Corollary~\ref{cor:ProxSkip-L-SVRGDA-FL_complexity} in the minimization setting and assume each $F_i$ is $L$-Lipschitz and $\mu$-strongly convex, the number of iteration and calls of the proximal oracles will be $\mathcal{O}\autopar{\kappa\ln\frac{1}{\epsilon}}$ and $\mathcal{O}\autopar{\sqrt{\kappa}\ln\frac{1}{\epsilon}}$, which recover the results of variance-reduced ProxSkip in \cite{malinovsky2022variance}.
	\vspace{-5mm}
	\section{Application of ProxSkip to Federated Learning}
	\label{asdas}
	\vspace{-3mm}
	In this section, by specifying the general problem to the expression of~\eqref{eq:mpFL_VIP_form} as a special case of \eqref{eq:objective_FL_reformulation},  we explain how the proposed algorithmic framework can be interpreted as federated learning algorithms. As discussed in Section~\ref{sec:connection_VIP_FL}, in the distributed/federated setting, evaluating the $\prox_{\gamma R}\autopar{x}$ is equivalent to a communication between the $n$ workers. In the FL setting, Algorithm~\ref{alg:Stoc-ProxSkip-VIP} can be expressed as Algorithm~\ref{alg:ProxSkip-VIP-FL}. Note that skipping the proximal operator in Algorithm~\ref{alg:Stoc-ProxSkip-VIP} corresponds to local updates in Algorithm~\ref{alg:ProxSkip-VIP-FL}. In Algorithm~\ref{alg:ProxSkip-VIP-FL}, $g_{i,t}=g_i(x_{i,t})$ is the unbiased estimator of the $f_i(x_{i,t})$ of the original problem \eqref{eq:FedVIP}, while the client control vectors $h_{i,t}$ satisfy $h_{i,t} \rightarrow f_i(z^*)$. The probability $p$ in this setting shows how often a communication takes place (averaging of the workers' models).
	\begin{algorithm}[htbp]
		\caption{ProxSkip-VIP-FL}
		\label{alg:ProxSkip-VIP-FL}
		\begin{algorithmic}[1]
			\REQUIRE Initial points $\{x_{i,0}\}_{i=1}^n$ and $\{h_{i,0}\}_{i=1}^n$, parameters $\gamma, p, T$
			\FORALL{$t = 0,1,..., T$}
			\STATE \textbf{Server:} 
			Flip a coin $\theta_t$, $\theta_t=1$ w.p. $p$, otherwise $0$.
			Send $\theta_t$ to all workers
			\FOR{{\bf each workers $i\in\automedpar{n}$ in parallel}} 
			\STATE $\widehat{x}_{i, t+1}=x_{i, t}-\gamma (g_{i,t}-h_{i,t})$             
			\COMMENT{Local update with control variate}
			\IF{$\theta_t=1$}
			\STATE Worker: $x_{i, t+1}'=\widehat{x}_{i, t+1}-\frac{\gamma}{p} h_{i,t}$, sends $x_{i, t+1}'$ to the server
			\STATE Server: computes $x_{i, t+1}=\frac{1}{n}\sum_{i=1}^n x_{i, t+1}'$ and send to workers
			\COMMENT{Communication}
			\ELSE
			\STATE $x_{i, t+1}=\widehat{x}_{i, t+1}$
			\COMMENT{Otherwise skip the communication step}
			\ENDIF
			\STATE $h_{i, t+1}=h_{i, t}+\frac{p}{\gamma}(x_{i, t+1}-\widehat{x}_{i, t+1})$ 
			\ENDFOR
			\ENDFOR
			\vspace{-1mm}
		\end{algorithmic}
	\end{algorithm}
	
	\textbf{Algorithm: ProxSkip-SGDA-FL.}
	The first implementation of the framework we consider is ProxSkip-SGDA-FL. Similar to ProxSkip-SGDA in the centralized setting, here we set the estimator to be the vanilla estimator of $F$, 
	i.e., 
	$g_{i,t}=g_i(x_{i,t})$, where $g_i$ is an unbiased estimator of $f_i$. We note that if we set $h_{i,t}\equiv 0$, then Algorithm~\ref{alg:ProxSkip-VIP-FL} is reduced to the typical Local SGDA~\citep{deng2021local}.
	To proceed with the analysis in FL, we require the following assumption:
	\begin{assumption}
		\label{assume:additional_FL}
		The Problem \eqref{eq:FedVIP} attains a unique solution $z^*\in\mathbb{R}^{d'}$. Each $f_i$ in \eqref{eq:FedVIP}, it is $\mu$-quasi-strongly monotone around $z^*$, i.e., for any $x_i \in\mathbb{R}^{d'}$,  $\autoprod{f_i(x_i)-f_i(z^*), x_i-z^*} \geq\mu\autonorm{x_i-z^*}^2$. Operator $g_i(x_i)$, is an unbiased estimator of $f_i(x_i)$, and for all $x_i\in\mathbb{R}^{d'}$ we have 
		$\mathbb{E}\autonorm{g_i(x_i)-g_i(z^*)}^2\leq L_g \autoprod{f_i(x_i)-f_i(z^*), x_i-z^*}.$
	\end{assumption}
	\vspace{-2mm}
	The assumption on the uniqueness of the solution is pretty common in the literature. For example, in the (unconstrained) minimization case, quasi-strong monotonicity implies uniqueness $z^*$~\citep{hinder2020near}. Assumption~\ref{assume:additional_FL} is required as through it, we can prove that the operator $F$ in \eqref{eq:objective_FL_reformulation} satisfies Assumption~\ref{assume:main}, and its corresponding estimator satisfies Assumption~\ref{assume:stochastic}, which we detail in Appendix 
	\ref{apdx:thm_FL_Operator_Check}. 
	Let us now present the convergence guarantees.
	\begin{theorem}[Convergence of ProxSkip-SGDA-FL]
		\label{thm:complexity_FL_ProxSkip}
		With Assumption~\ref{assume:additional_FL}, then ProxSkip-VIP-FL~(Algorithm~\ref{alg:ProxSkip-VIP-FL}) achieves $\mathbb{E}\automedpar{V_T}\leq\epsilon$ (where $V_T$ is defined in Theorem \ref{thm:convergence_Stoc-ProxSkip-VIP}), with iteration complexity 
		$\mathcal{O}(\max\autobigpar{\frac{L_g}{\mu}, \frac{\sigma_*^2}{\mu^2\epsilon}}\ln\frac{1}{\epsilon})$ 
		and communication complexity  $\mathcal{O}(\sqrt{\max\autobigpar{\frac{L_g}{\mu}, \frac{\sigma_*^2}{\mu^2\epsilon}}}\ln\frac{1}{\epsilon})$.
	\end{theorem}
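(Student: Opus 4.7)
My plan is to derive this theorem as a direct consequence of Corollary~\ref{thm:convergence_ProxSkip-SGDA} (convergence of ProxSkip-SGDA) by verifying that the federated learning instantiation satisfies the general assumptions of the centralized framework. The key observation is that Algorithm~\ref{alg:ProxSkip-VIP-FL} is nothing but Algorithm~\ref{alg:Stoc-ProxSkip-VIP} applied to the consensus reformulation~\eqref{eq:objective_FL_reformulation}--\eqref{eq:mpFL_VIP_form}, with the proximal operator corresponding to averaging across workers. Therefore, each skipped proximal evaluation is a skipped communication round.

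The first step is to check that, under Assumption~\ref{assume:additional_FL}, the aggregated operator $F(x)=\sum_{i=1}^n F_i(x_i)$ defined in~\eqref{eq:mpFL_VIP_form} satisfies Assumption~\ref{assume:main} when restricted to the consensus subspace $\{x : x_1=\cdots=x_n\}$ containing the solution $x^*=(z^*,\ldots,z^*)$. Because each $F_i(x_i)$ has its nonzero block only in the $i$-th coordinate group, the inner product $\langle F(x)-F(x^*), x-x^*\rangle$ decomposes as $\sum_{i=1}^n \langle f_i(x_i)-f_i(z^*), x_i-z^*\rangle$. Quasi-strong monotonicity of each $f_i$ around $z^*$ then directly lifts to $\mu$-quasi-strong monotonicity of $F$ on $\mathbb{R}^d$. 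Star-cocoercivity of $F$ with parameter $\ell=L_g$ (or a multiple thereof) is obtained analogously from the per-client expected cocoercivity assumption specialized to the deterministic case, which is the route already taken in the appendix reference \ref{apdx:thm_FL_Operator_Check}.

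The second step is to verify the stochastic oracle condition, Assumption~\ref{assume:ECC}, for the aggregated estimator $g(x)=\sum_{i=1}^n G_i(x_i)$ where $G_i$ embeds $g_i$ into the $i$-th block. By independence across clients (or by taking the block-wise sum), $\mathbb{E}\|g(x)-g(x^*)\|^2=\sum_{i=1}^n \mathbb{E}\|g_i(x_i)-g_i(z^*)\|^2$, and applying the per-client expected cocoercivity from Assumption~\ref{assume:additional_FL} yields $\mathbb{E}\|g(x)-g(x^*)\|^2\leq L_g \langle F(x)-F(x^*),x-x^*\rangle$. Moreover, $\sigma_*^2$ at the FL level equals the sum of per-client variances at the optimum, which remains finite. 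Invoking Lemma~\ref{thm:property_local_estimator} then certifies that $g_t$ in Algorithm~\ref{alg:ProxSkip-VIP-FL} satisfies Assumption~\ref{assume:stochastic} with $A=L_g$, $D_1=2\sigma_*^2$, $\rho=1$, and $B=C=D_2=0$.

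With both assumptions verified, the iteration complexity is obtained by directly applying Corollary~\ref{thm:convergence_ProxSkip-SGDA}, which gives $\mathcal{O}(\max\{L_g/\mu,\sigma_*^2/(\mu^2\eps)\}\ln(1/\eps))$. The communication complexity follows from the same corollary: since evaluating $\prox_{\gamma R/p}$ in the consensus reformulation is exactly a round of averaging (one communication round), the number of proximal calls $\mathcal{O}(\sqrt{\max\{L_g/\mu,\sigma_*^2/(\mu^2\eps)\}}\ln(1/\eps))$ is the number of communication rounds. The main obstacle I anticipate is purely bookkeeping: carefully confirming that the block-diagonal structure of $F$ in~\eqref{eq:mpFL_VIP_form} preserves both the quasi-strong monotonicity modulus $\mu$ and the expected cocoercivity constant $L_g$ without introducing a factor of $n$; this is precisely the check that the appendix carries out, and I would simply invoke it rather than reprove it in the main text.
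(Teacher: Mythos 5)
Your proposal matches the paper's own argument: the paper also proves this theorem by verifying (in Appendix~\ref{apdx:thm_FL_Operator_Check}, Propositions~\ref{prop:tranform_equiv_sol} and~\ref{prop:FL_Operator_Check}) that under Assumption~\ref{assume:additional_FL} the consensus-reformulated operator is $\mu$-quasi-strongly monotone and the block estimator is $L_g$-expected cocoercive, then invoking Lemma~\ref{thm:property_local_estimator} and Corollary~\ref{thm:convergence_ProxSkip-SGDA}, with proximal calls identified as communication rounds. Your plan is correct and essentially identical in structure.
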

	\vspace{-3mm}
	\textbf{Comparison with Literature.}
	Note that Theorem~\ref{thm:complexity_FL_ProxSkip} is quite general, which holds under any reasonable, unbiased estimator. In the special case of federated minimax problems, 
	one can use the same (mini-batch) gradient estimator from Local SGDA~\citep{deng2021local} in Algorithm~\ref{alg:ProxSkip-VIP-FL} and our results still hold. The benefit of our approach compared to Local SGDA is the communication acceleration, as pointed out in Table~\ref{TableFirst}. In addition, in the deterministic setting ($g_i(x_{i,t})=f_i(x_{i,t})$) we have $\sigma_*^2=0$ and Theorem~\ref{thm:complexity_FL_ProxSkip} reveals  $\mathcal{O}(\ell/\mu \ln\frac{1}{\epsilon})$ iteration complexity and $\mathcal{O}(\sqrt{\ell/\mu}\ln\frac{1}{\epsilon})$ communication complexity for ProxSkip-GDA-FL.
	In Table~\ref{table:comparison_v2} of the appendix, we provide a more detailed comparison of our Algorithm~\ref{alg:ProxSkip-VIP-FL} (Theorem~\ref{thm:complexity_FL_ProxSkip}) with existing literature in FL. The proposed approach outperforms other algorithms (Local SGDA, Local SEG, FedAvg-S) in terms of iteration and communication complexities.
	
	As the baseline, the distributed (centralized) gradient descent-ascent (GDA) and extragradient (EG) algorithms achieve $\mathcal{O}(\kappa^2\ln\frac{1}{\eps})$ and $\mathcal{O}(\kappa\ln\frac{1}{\eps})$ communication complexities, respectively~\citep{fallah2020optimal,mokhtari2019convergence}.
	We highlight that our analysis does not require an assumption on bounded heterogeneity~/~dissimilarity, and as a result, we can solve problems with heterogeneous data. 
	Finally, as reported by \cite{beznosikov2020distributed}, the lower communication complexity bound for problem \eqref{eq:FedVIP} is given by $\Omega(\kappa\ln\frac{1}{\eps})$, which further highlights the optimality of our proposed ProxSkip-SGDA-FL algorithm.
	\vspace{-3mm}
	\paragraph{Algorithm: ProxSkip-L-SVRGDA-FL.}
	Next, we focus on variance-reduced variants of ProxSkip-SGDA-FL
	and we further specify the operator $F$ in \eqref{eq:objective_FL_reformulation} as $F(x)\triangleq\sum_{i=1}^n F_i(x_i)$ and $F_i(x)\triangleq \frac{1}{m_i}\sum_{j=1}^{m_i} F_{i,j}(x_i)$.
	The proposed algorithm, ProxSkip-L-SVRGDA-FL, is presented in the Appendix as Algorithm \ref{alg:ProxSkip-L-SVRGDA-FL}. 
	In this setting, we need the following assumption on $F_i$ to proceed with the analysis.
	
	\begin{assumption}
		\label{assume:additional_SVRG_FL}
		The Problem \eqref{eq:FedVIP} attains a unique solution $z^*$. Also for each $f_i$ in \eqref{eq:FedVIP}, it is $\mu$-quasi-strongly monotone around $z^*$. Moreover we assume for all $x_i\in\mathbb{R}^{d'}$ we have 
		$\frac{1}{m_i}\sum_{j=1}^{m_i}\autonorm{F_{i,j}(x_i)-F_{i,j}(z^*)}^2\leq \widehat{\ell} \autoprod{f_i(x_i)-f_i(z^*), x_i-z^*}.$
	\end{assumption}
	\vspace{-2mm}
	Similar to the ProxSkip-SGDA-FL case, we can show that under Assumption \eqref{assume:additional_SVRG_FL}, the operator and unbiased estimator fit into the setting of Assumptions~\ref{assume:main} and \ref{assume:stochastic} (see derivation in Appendix \ref{apdx:FL_operator_check}). As a result, we can obtain the following complexity result.
	
	\begin{theorem}[Convergence of ProxSkip-L-SVRGDA-FL]
		\label{dnaoao}
		Let Assumption \ref{assume:additional_SVRG_FL} hold. Then the iterates of ProxSkip-L-SVRGDA-FL achieve $\mathbb{E}[V_T]\leq\epsilon$ (where $V_T$ is defined in Theorem \ref{thm:convergence_Stoc-ProxSkip-VIP})with iteration complexity 
		$\mathcal{O}(\widehat{\ell}/\mu\ln\frac{1}{\epsilon})$ and communication complexity $\mathcal{O}(\sqrt{\widehat{\ell}/\mu}\ln\frac{1}{\epsilon})$.
	\end{theorem}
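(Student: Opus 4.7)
The plan is to reduce Theorem~\ref{dnaoao} to a direct application of Corollary~\ref{cor:ProxSkip-L-SVRGDA-FL_complexity} after verifying that ProxSkip-L-SVRGDA-FL is nothing but the centralized ProxSkip-L-SVRGDA applied to the consensus reformulation~\eqref{eq:objective_FL_reformulation}--\eqref{eq:mpFL_VIP_form} of the original distributed VIP~\eqref{eq:FedVIP}. The argument will parallel the derivation already outlined for ProxSkip-SGDA-FL (Theorem~\ref{thm:complexity_FL_ProxSkip}): first check the structural assumptions on the lifted operator $F$, then check the stochastic estimator assumption, and finally invoke the centralized complexity result.

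First I would verify Assumption~\ref{assume:main} for the lifted operator $F(x)=\sum_i F_i(x_i)$ defined via the block embedding in~\eqref{eq:mpFL_VIP_form}. Because $x^{*}=(z^{*},\dots,z^{*})$ lies in the consensus subspace, the block structure separates cleanly:
\[
\autoprod{F(x)-F(x^{*}),\,x-x^{*}}=\sum_{i=1}^{n}\autoprod{f_i(x_i)-f_i(z^{*}),\,x_i-z^{*}}.
\]
Quasi-strong monotonicity of each $f_i$ from Assumption~\ref{assume:additional_SVRG_FL} then gives $\mu$-quasi-strong monotonicity of $F$ on all of $\mathbb{R}^{d}$, and similarly the per-worker cocoercivity hypothesis lifts to star-cocoercivity of $F$ by summing squared norms blockwise. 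The indicator-style $R$ is lower semicontinuous and convex, so the second part of Assumption~\ref{assume:main} is immediate. This step is mostly bookkeeping and I would defer the detailed calculation to an appendix (and in fact it is already done in Appendix~\ref{apdx:FL_operator_check} for ProxSkip-SGDA-FL; the same template works).

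Next I would verify Assumption~\ref{assume:stochastic} for the aggregated variance-reduced estimator $g_t=(g_{1,t},\dots,g_{n,t})$ used by ProxSkip-L-SVRGDA-FL, with parameters matching Lemma~\ref{lm:ABC_SVRG}. Setting $\sigma_t^2\triangleq \sum_{i=1}^{n}\tfrac{1}{m_i}\sum_{j=1}^{m_i}\autonorm{F_{i,j}(x_{i,t})-F_{i,j}(z^{*})}^{2}$ and exploiting the block-diagonal structure, the aggregated estimator variance decomposes as $\mathbb{E}\autonorm{g_t-F(x^{*})}^{2}=\sum_i \mathbb{E}\autonorm{g_{i,t}-f_i(z^{*})}^{2}$. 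Applying the standard L-SVRG variance bound at each client and then summing, together with Assumption~\ref{assume:additional_SVRG_FL}, yields the two inequalities of Assumption~\ref{assume:stochastic} with $A=\widehat{\ell}$, $B=2$, $D_1=D_2=0$, $C=\tfrac{q\widehat{\ell}}{2}$ and $\rho=q$, reproducing Lemma~\ref{lm:ABC_SVRG} in the block-structured federated setting. The second inequality (the recursion on $\sigma_t^2$) uses the loopless reference-point update and the fact that each reference point is refreshed independently with probability $q$.

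Once both assumptions are verified with these parameters, Corollary~\ref{cor:ProxSkip-L-SVRGDA-FL_complexity} applies verbatim with the stated choices $q=2\gamma\mu$, $M=4/q$, $p=\sqrt{\gamma\mu}$ and $\gamma=\min\{1/\mu,1/(6\widehat{\ell})\}$, which delivers the claimed $\mathcal{O}(\widehat{\ell}/\mu\,\ln(1/\epsilon))$ iteration complexity and $\mathcal{O}(\sqrt{\widehat{\ell}/\mu}\,\ln(1/\epsilon))$ communication complexity (the latter because communication occurs exactly when the prox is taken, with per-step probability $p$). The main technical obstacle I anticipate is the careful bookkeeping in the second step: matching the per-client squared-norm estimates against the aggregated inner product $\autoprod{F(x)-F(x^{*}),x-x^{*}}$ under the consensus-space block embedding, especially in the non-iid / heterogeneous regime where $f_i(z^{*})\ne 0$ individually although $\sum_i f_i(z^{*})=0$. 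Everything else is an invocation of the already-proven general theorem.
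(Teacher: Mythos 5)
Your proposal is correct and follows essentially the same route as the paper: the paper also proves the result by lifting the problem to the consensus reformulation, verifying in Appendix~\ref{apdx:FL_operator_check} (Proposition~\ref{prop:FL_Operator_Check_SVRG}) that the lifted operator is quasi-strongly monotone and that the aggregated loopless-SVRG estimator satisfies Assumption~\ref{assume:stochastic} with exactly your parameters $A=\widehat{\ell}$, $B=2$, $C=\tfrac{q\widehat{\ell}}{2}$, $\rho=q$, $D_1=D_2=0$ and the same $\sigma_t^2$, and then invoking Corollary~\ref{cor:ProxSkip-L-SVRGDA-FL_complexity}. The only minor cosmetic difference is a normalization bookkeeping issue (the paper's appendix states a $\mu/n$ quasi-strong monotonicity constant, reflecting an averaged definition of $F$, whereas your $\mu$ matches the summed definition in~\eqref{eq:mpFL_VIP_form}), which does not affect the stated complexities.
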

	\vspace{-2mm}
	In the special case of minimization problems $\min_{z\in\mathbb{R}^d}f(z)$, i.e., $F(z)=\nabla f(z)$, we have Theorem~\ref{dnaoao} recovers the theoretical result of \cite{malinovsky2022variance} which focuses on ProxSkip methods for minimization problems, showing the tightness of our approach. 
	Similar to the ProxSkip-VIP-FL, the ProxSkip-L-SVRGDA-FL enjoys a communication complexity improvement in terms of the condition number $\kappa$. For more details, please refer to Table~\ref{TableFirst} (and Table~\ref{table:comparison_v2} in the appendix).
	
	\vspace{-3mm}
	\section{Numerical Experiments}
	\vspace{-2mm}
	We corroborate our theory with the experiments, and test the performance of the proposed algorithms ProxSkip-(S)GDA-FL (Algorithm~\ref{alg:ProxSkip-VIP-FL} with $g_i(x_{i,t})=f_i(x_{i,t})$ or its unbiased estimator)  and ProxSkip-L-SVRGDA-FL (Algorithm~\ref{alg:ProxSkip-L-SVRGDA-FL}). We focus on two classes of problems: (i) strongly monotone quadratic games and (ii) robust least squares. See Appendix~\ref{apdx:numerical_experiments} and \ref{apdx:more_experiment} for more details and extra experiments.
	
	To evaluate the performance, we use the relative error measure $\frac{\|x_k - x^*\|^2}{\|x_0 - x^*\|^2}$. The 
	horizontal axis corresponds to the number of communication rounds.
	For all experiments, we pick the step-size $\gamma$ and probability $p$ for different algorithms according to our theory. That is, ProxSkip-(S)GDA-FL based on Corollary~\ref{thm:convergence_ProxSkip-SGDA} and \ref{cor:ProxSkip_GDA_complexity}, ProxSkip-L-SVRGDA-FL by Cororllary~\ref{cor:ProxSkip-L-SVRGDA-FL_complexity}. See also Table~\ref{table:stepsize} in the appendix for the settings of parameters. We compare our methods to Local SGDA~\citep{deng2021local}, Local SEG~\citep{beznosikov2020distributed} (and their deterministic variants), and FedGDA-GT, a deterministic algorithm proposed in ~\cite{sun2022communication}. For all methods, we use parameters based on their theoretical convergence guarantees. For more details on our implementations and additional experiments, see Appendix~\ref{apdx:numerical_experiments} and \ref{apdx:more_experiment}.
	
	\begin{figure}[ht]
		\begin{minipage}[b]{0.48\linewidth}
			\centering
			\begin{subfigure}[b]{0.45\textwidth}
				\centering
				\includegraphics[width=\textwidth]{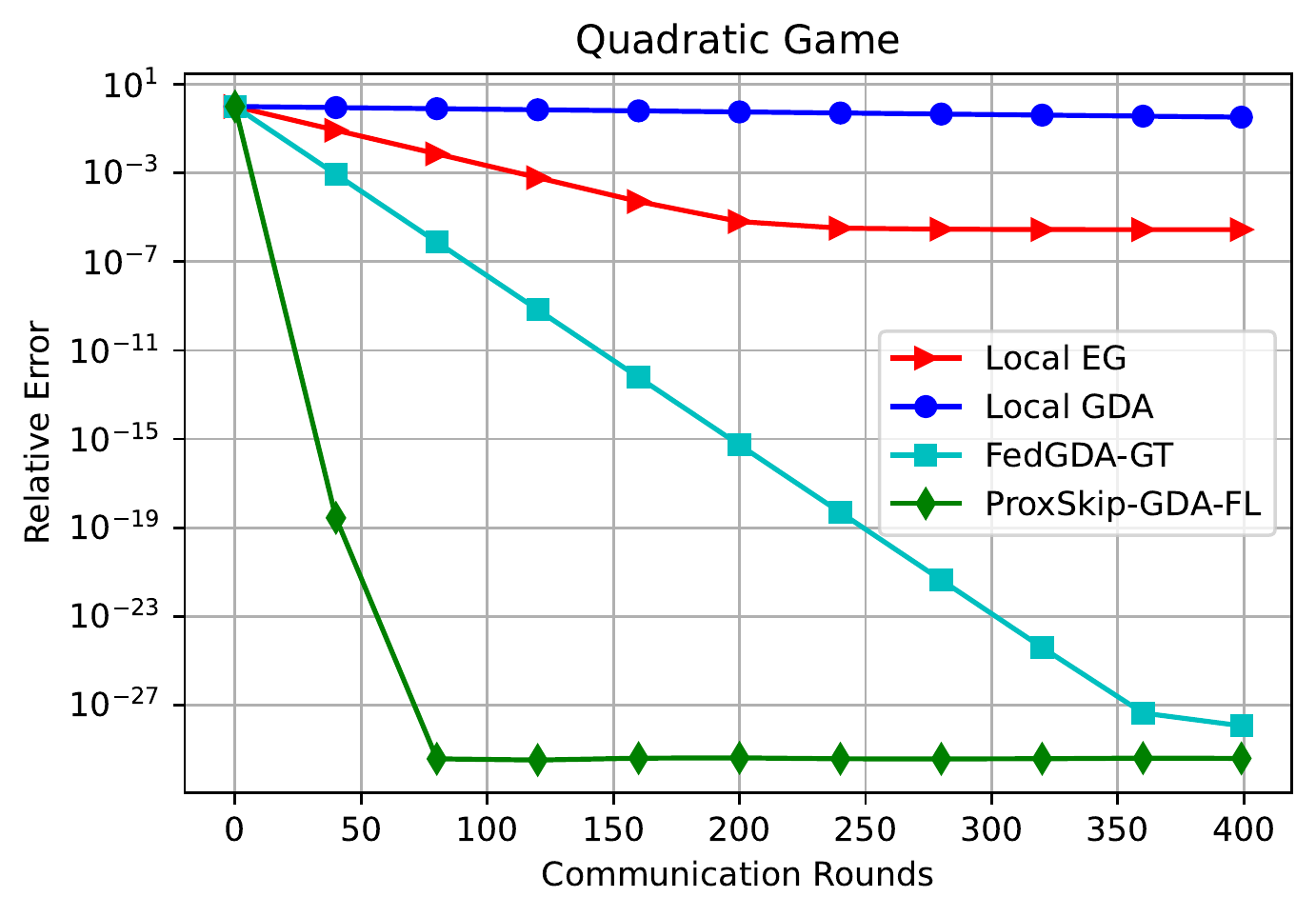}
				\caption{Deterministic Case}
				\label{fig:deterministicXtheoreticalXDatasetI}
			\end{subfigure}
			\hspace{1em}
			\begin{subfigure}[b]{0.45\textwidth}
				\centering
				\includegraphics[width=\textwidth]{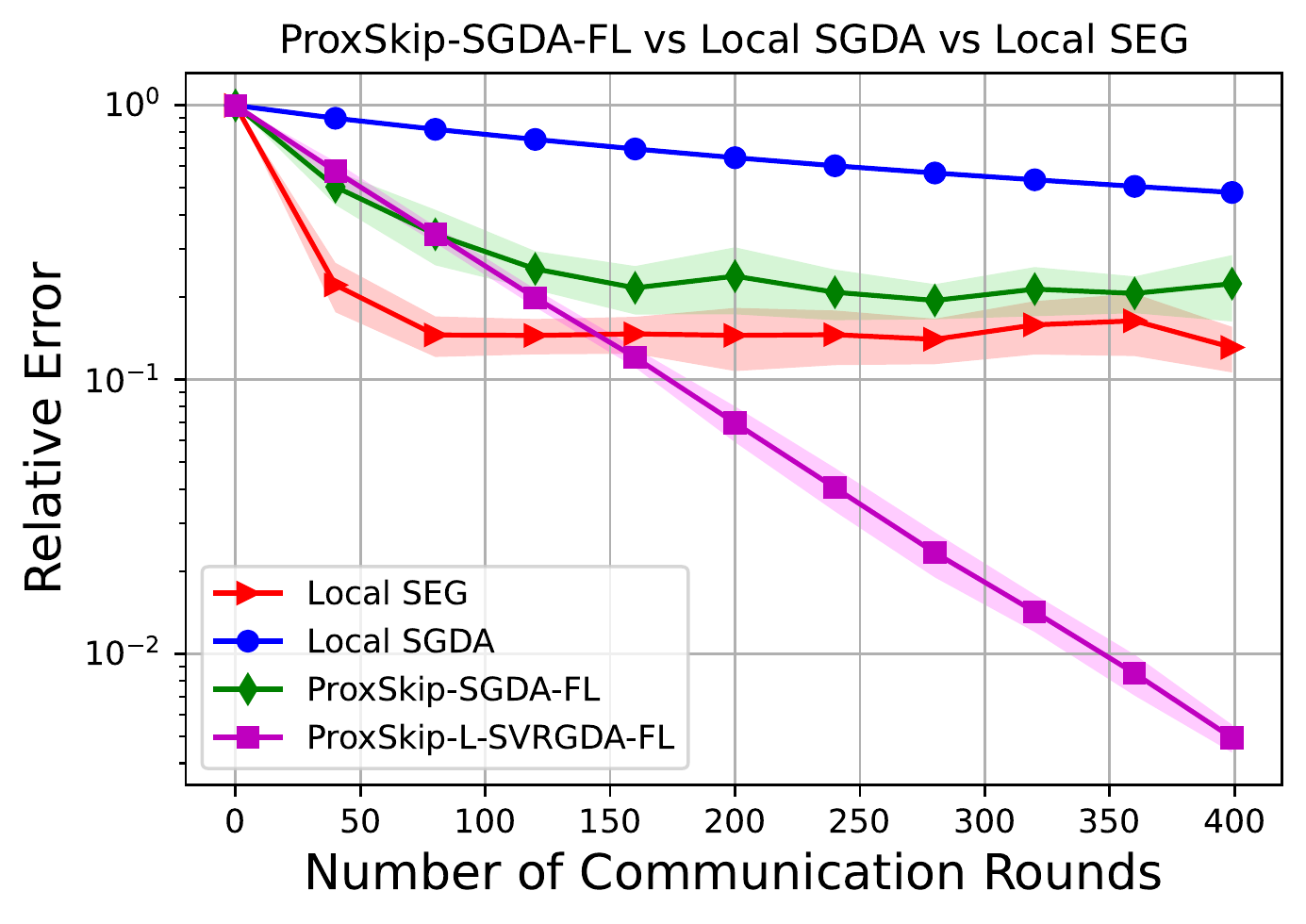}
				\caption{Stochastic Case}
				\label{fig:stochasticXtheoreticalXDatasetI}
			\end{subfigure}
			\caption{Comparison of algorithms on the strongly-monotone quadratic game \eqref{quadraticgame}.}
			\label{fig: Comparison of ProxSkip-VIP-FL vs Local SGDA vs Local SEG on Heterogeneous Data in the deterministic setting.}
		\end{minipage}
		\hspace{0.1cm}
		\begin{minipage}[b]{0.48\linewidth}
			\centering
			\begin{subfigure}[b]{0.45\textwidth}
				\centering
				\includegraphics[width=\textwidth]{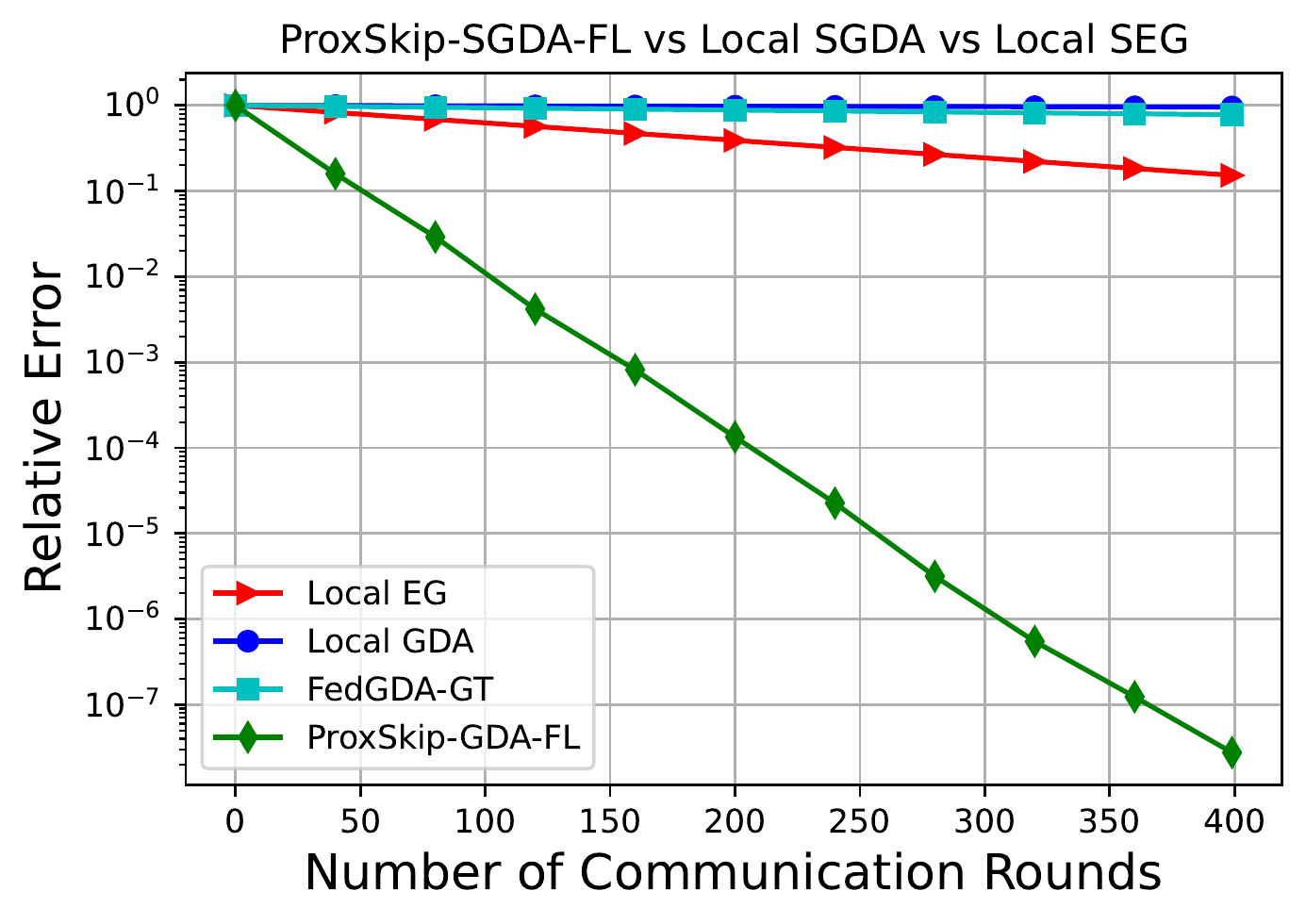}
				\caption{Deterministic Case}
				\label{fig:deterministicXtheoreticalXDatasetII}
			\end{subfigure}
			\hspace{1em}
			\begin{subfigure}[b]{0.45\textwidth}
				\centering
				\includegraphics[width=\textwidth]{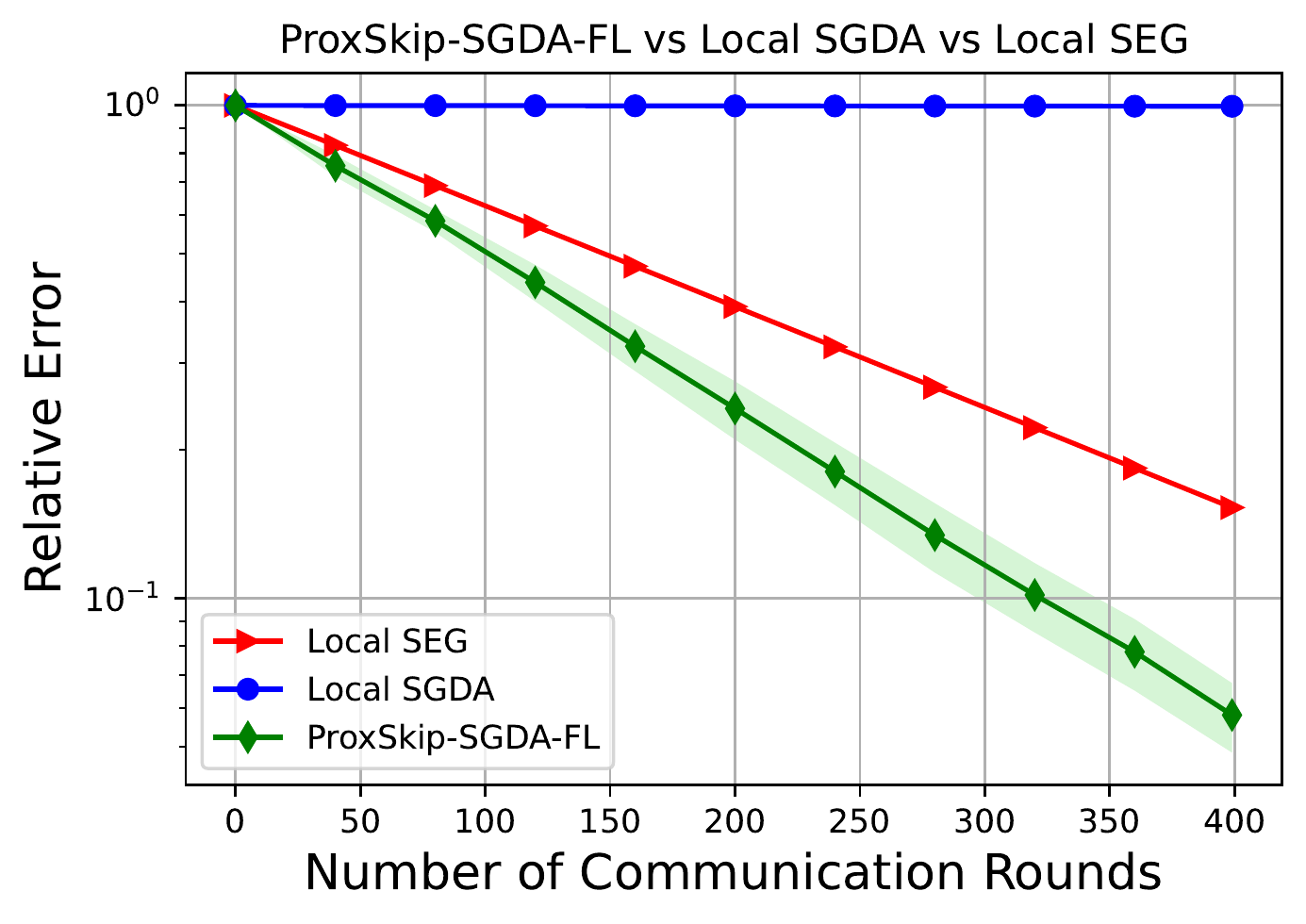}
				\caption{Stochastic Case}
				\label{fig:stochasticXtheoreticalXDatasetII}
			\end{subfigure}
			\caption{Comparison of algorithms on the Robust Least Square~\eqref{robustleastsquare}}
			\label{fig: Comparison of ProxSkip-VIP-FL vs Local SGDA vs Local SEG on Heterogeneous Data in the stochastic setting.}
		\end{minipage}
	\end{figure}
	
	\vspace{-3mm}
	
	\paragraph{Strongly-monotone Quadratic Games.}
	In the first experiment, we consider a problem of the form 
	\begin{equation}
		\label{quadraticgame}
		\min_{x_1 \in \mathbb{R}^d} \max_{x_2 \in \mathbb{R}^d} \frac{1}{n} \sum_{i = 1}^n \frac{1}{m_i} \sum_{j = 1}^{m_i}f_{ij}(x_1, x_2) ,
	\end{equation}
	where $f_{ij}(x_1, x_2) \triangleq \frac{1}{2}x_1^{\intercal}A_{ij} x_1 + x_1^{\intercal}B_{ij} x_2 - \frac{1}{2}x_1^{\intercal}C_{ij} x_1
	+ a_{ij}^{\intercal}x_1 - c_{ij}^{\intercal}x_2.
	$ Here we set the number of clients $n = 20$, $m_i = 100$ for all $i \in [n]$ and $d = 20$. We generate positive semidefinite matrices $A_{ij}, B_{ij}, C_{ij} \in \mathbb{R}^{d \times d}$ such that eigenvalues of $A_{ij}, C_{ij}$ lie in the interval $[0.01, 1]$ while those of $B_{ij}$ lie in $[0,1]$. The vectors $a_{ij}, c_{ij} \in \mathbb{R}^{d}$ are generated from $\mathcal{N}_d(0, I_d)$ distribution. This data generation process ensures that the quadratic game satisfies the assumptions of our theory. To make the data heterogeneous, we produce different $A_{ij}, B_{ij}, C_{ij}, a_{ij}, c_{ij}$ across the clients indexed by $i \in [n]$.
	
	We present the results in Figure~\ref{fig: Comparison of ProxSkip-VIP-FL vs Local SGDA vs Local SEG on Heterogeneous Data in the deterministic setting.} for both deterministic and stochastic settings. As our theory predicted, our proposed methods are always faster in terms of communication rounds than Local (S)GDA. The current analysis of Local EG requires the bounded heterogeneity assumption which leads to convergence to a neighborhood even in a deterministic setting. As also expected by the theory, our proposed variance-reduced algorithm converges linearly to the exact solution. 
	\vspace{-.3cm}
	\paragraph{Robust Least Square.}\label{sec:RobustLearning}
	In the second experiment, we consider the robust least square (RLS) problem~\citep{el1997robust, yang2020global} with the coefficient matrix ${\bf A} \in \R^{r \times s}$ and noisy vector $y_0 \in \R^r$. We assume $y_0$ is corrupted by a bounded perturbation $\delta$ (i.e.,$\|\delta\| \leq \delta_0$). RLS minimizes the
	worst case residual and can be formulated as follows: 
	$$
	\min_{\beta} \max_{\delta: \|\delta\| \leq \delta_0} \|{\bf A}\beta - y\|^2\quad\text{where}\quad\delta = y_0 - y.
	$$
	In our work, we consider the following penalized version of the RLS problem: 
	\begin{eqnarray}
		\label{robustleastsquare}
		\min_{\beta \in \R^s} \max_{y \in \R^r} \|{\bf A}\beta - y\|^2 - \lambda \|y - y_0\|^2
	\end{eqnarray}
	This objective function is strongly-convex-strongly-concave when $\lambda > 1$~\citep{thekumparampil2022lifted}. We run our experiment on the ``California Housing" dataset from scikit-learn package~\citep{pedregosa2011scikit}, using $\lambda = 50$ in \eqref{robustleastsquare}. Please see Appendix~\ref{apdx:numerical_experiments} for further detail on the setting.
	
	In Figure~\ref{fig: Comparison of ProxSkip-VIP-FL vs Local SGDA vs Local SEG on Heterogeneous Data in the stochastic setting.}, we show the trajectories of our proposed algorithms compared to Local EG, Local GDA and their stochastic variants. In both scenarios (deterministic and stochastic), our ProxSkip-GDA/SGDA-FL algorithms outperform the other methods in terms of communication rounds.
	
	\newpage
	\section*{Reproducibility Statement}
	The code for implementing all proposed algorithms and reproducing our experimental evaluation is available within the supplementary materials, and at \url{https://github.com/isayantan/ProxSkipVIP}.
	
	\section*{Acknowledgement}
	Siqi Zhang gratefully acknowledges funding support from The Acheson J. Duncan Fund for the Advancement of Research in Statistics by Johns Hopkins University. Nicolas Loizou acknowledges support from CISCO Research.
	
	\bibliography{ref}
	\bibliographystyle{iclr2024_conference}
	
	\clearpage
	
	\appendix
	
	\part*{Supplementary Material}
	
	The Supplementary Material is organized as follows: \\Section \ref{apdx:related_work} reviews the existing literature related to our work. Section \ref{apdx:notations} summarizes the notational conventions used in the main paper and appendix, while Section \ref{apdx:pseudocodes} includes the missing pseudocodes of the main paper (related to variance reduced methods). Next, Sections \ref{apdx:useful_lemmas} and \ref{apdx:further_lemmas} present some basic lemmas to prove our main convergence results. We present the proofs of the main theorems and corollaries in Section \ref{apdx:convergence_results}. Finally, in section \ref{apdx:numerical_experiments}, we add further details related to our experiments in the main paper, while in section \ref{apdx:more_experiment}, we add more related experiments to verify our theory. 
	\tableofcontents
	
	\section{Further Related Work}
	\label{apdx:related_work}
	The references necessary to motivate our work and connect it to the most relevant literature are included
	in the appropriate sections of the main body of the paper. In this section, we present a broader view of
	the literature, including more details on closely related work and more references to papers that are not
	directly related to our main results.
	
	\paragraph{Federated Learning.} One of the most popular characteristics of communication-efficient FL algorithms is local updates, where each client/node of the network takes multiple steps of the chosen optimization algorithm locally between the communication rounds. Many algorithms with local updates (e.g., FedAvg, Local GD, Local SGD, etc.) have been proposed and extensively analyzed in FL literature under various settings, including convex and nonconvex problems \citep{mcmahan2017communication,stich2018local, assran2019stochastic, kairouz2021advances,wang2021field,karimireddy2020scaffold,woodworth2020local, koloskova2020unified}.
	In local FL algorithms, a big source of variance in the convergence guarantees stems from \emph{inter-client} differences (heterogeneous data). On that end, inspired by the popular variance reduction methods from optimization literature (e.g., SVRG~\citep{johnson2013accelerating} or SAGA~\citep{defazio2014:saga}), researchers start investigating variance reduction mechanisms to handle the variance related to heterogeneous environments (e.g. FedSVRG~\citep{konevcny2016federated}), resulting in the popular SCAFFOLD algorithm\citep{karimireddy2020scaffold}.
	Gradient tracking~\citep{di2016next,nedic2017achieving} is another class of methods not affected by data-heterogeneity, but its provable communication complexity scales linearly in the condition number~\citep{koloskova2021improved,alghunaim2023enhanced}, even when combined with local steps \citep{liu2023decentralized}.
	Finally, \cite{mishchenko2022proxskip} proposed a new algorithm (ProxSkip) for federated minimization problems that guarantees acceleration of communication complexity under heterogeneous data. As we mentioned in Section~\ref{MainCont}, the framework of our paper includes the algorithm and convergence results of~\cite{mishchenko2022proxskip} as a special case, and it could be seen as an extension of these ideas in the distributed variational inequality setting. 
	
	\paragraph{Minimax Optimization and Variational Inequality Problems.} 
	Minimax optimization, and more generally, variational inequality problems (VIPs)~\citep{hartman1966some,facchinei2003finite} appear in various research areas, including but not limited to online learning~\citep{cesa2006prediction}, game theory~\citep{von1947theory}, machine learning~\citep{goodfellow2014generative} and social and economic practices~\citep{facchinei2003finite,parise2019variational}. The gradient descent-ascent (GDA) method is one of the most popular algorithms for solving minimax problems. However, GDA fails to converge for convex-concave minimax problems \citep{mescheder2018training} or bilinear games \citep{gidel2019negative}. To avoid these issues,  \cite{korpelevich1976extragradient} introduced the extragradient method (EG), while \cite{popov1980modification} proposed the optimistic gradient method (OG). Recently, there has been a surge of developments for improving the extra gradient methods for better convergence guarantees. For example, \cite{mokhtari2019convergence,mokhtari2020unified} studied optimistic and extragradient methods as an approximation of the proximal point method to solve bilinear games, while \cite{hsieh2020explore} focused on the stochastic setting and proposed the use of a larger extrapolation step size compared to update step size to prove convergence under error-bound conditions. 
	
	Beyond the convex-concave setting, most machine learning applications expressed as min-max optimization problems involve nonconvex-nonconcave objective functions. Recently, \cite{daskalakis2021complexity} showed that finding local solutions is intractable in the nonconvex-nonconcave regime, which motivates researchers to look for additional structures on problems that can be exploited to prove the convergence of the algorithms. For example, \cite{loizou2021stochastic} provided convergence guarantees of stochastic GDA under expected co-coercivity,
	while \cite{yang2020global} studied the convergence stochastic alternating GDA under the PL-PL condition.
	\cite{lin2020gradient} showed that solving nonconvex-(strongly)-concave minimax problems using GDA by appropriately maintaining the primal and dual stepsize ratio is possible, and several follow-up works improved the computation complexities~\citep{yang2020catalyst,zhang2021complexity,yang2022faster,zhang2022SAPD}. 
	Later \cite{diakonikolas2021efficient} introduced the notion of the weak minty variational inequality (MVI), which captures a large class of nonconvex-nonconcave minimax problems. \cite{gorbunov2022convergence} provided tight convergence guarantees for solving weak MVI using deterministic extragradient and optimistic gradient methods. However, deterministic algorithms can be computationally expensive, encouraging researchers to look for stochastic algorithms. On this end, \cite{diakonikolas2021efficient,pethick2023escaping,bohm2022solving} analyze stochastic extragradient and optimistic gradient methods for solving weak MVI with increasing batch sizes. Recently, \cite{pethick2023solving} used a bias-corrected variant of the extragradient method to solve weak MVI without increasing batch sizes. Lately, variance reduction methods has also proposed for solving min-max optimization problems and VIPs~\cite {alacaoglu2022stochastic,cai2022stochastic}. As we mentioned in the main paper, in our work, the proposed convergence guarantees hold for a class of non-monotone problems, i.e., the class of is $\mu$-quasi-strongly monotone and $\ell$-star-cocoercive operators (see Assumption~\ref{assume:main}). 
	
	\paragraph{Minimax Federated Learning.}
	Going beyond the more classical centralized setting, recent works study the two-player federated minimax problems \citep{deng2021local,beznosikov2020distributed,sun2022communication,hou2021efficient,sharma2022federated,tarzanagh2022fednest,huang2022adaptive,yang2022sagda}.
	For example, \cite{deng2021local} studied the convergence guarantees of Local SGDA, which is an extension of Local SGD in the minimax setting, under various assumptions; \cite{sharma2022federated} studied furthered and improved the complexity results of Local SGDA in the nonconvex case; \cite{beznosikov2020distributed} studied the convergence rate of Local SGD algorithm with an extra step (we call it Local Stochastic Extragradient or Local SEG), under the (strongly)-convex–(strongly)-concave setting.
	Multi-player games, which, as we mentioned in the main paper, can be formulated as a special case of the VIP, are well studied in the context of game theory \citep{rosen1965existence,cai2011minmax,yi2017distributed,chen2023global}. With our work, by studying the more general distributed VIPs, our proposed federated learning algorithms can also solve multi-player games. 
	
	Finally, as we mentioned in the main paper, the algorithmic design of our methods is inspired by the proposed algorithm, ProxSkip of~\cite{mishchenko2022proxskip}, for solving composite minimization problems. However, we should highlight that since ~\cite{mishchenko2022proxskip} focuses on solving optimization problems, the function suboptimality $[f(x^k)-f(x^*)]$ is available (a concept that cannot be useful in the VIP setting. Thus, the difference in the analysis between the two papers begins at a deeper conceptual level. In addition, our work provides a unified algorithm framework under a general estimator setting (Assumption~\ref{assume:stochastic}) in the VIP regime that captures variance-reduced GDA and the convergence guarantees from \cite{beznosikov2022stochastic} as a special case.  Compared to existing works on federated minimax optimization~\citep{beznosikov2020distributed,deng2021local}, our analysis provides improved communication complexities and avoids the restrictive (uniform) bounded heterogeneity/variance assumptions (see discussion in Section~\ref{asdas}).
	
	\newpage
	\section{Notations}\label{apdx:notations}
	Here we present a summary of the most important notational conventions used throughout the paper. 
	\begin{itemize}
		\item FL: Federated Learning
		\item VIP: Variational Inequality Problem
		\item $\ell$: Modulus of star-cocoercivity
		\item $\widehat{\ell}$: Modulus of averaged star-cocoercivity
		\item $L$: Modulus of Lipschitz continuity
		\item $L_g$: Modulus of expected cocoercivity
		\item $\mu$: Modulus of (quasi-)strong monotonicity
		\item $A, B, C, D_1, D_2\geq 0$, $\rho\in(0,1]$: Parameters of Assumption~\ref{assume:stochastic} on the estimator $g$
		\item $\kappa=\frac{L}{\mu}$: condition number
		\item $[x]_{i:j}$: the $i$-th to $j$-th coordinates of vector $x$
		\item $[n]$: the set $\{1,2,\cdots,n\}$
		\item $z\in\mathbb{R}^{d'}$: the variable in VIP \eqref{eq:FedVIP}, $F(z)=\frac{1}{n}\sum_{i=1}^n f_i(z)$, $f_i:\mathbb{R}^{d'}\rightarrow\mathbb{R}^{d'}$
		\item $x\in\mathbb{R}^d$: the variable in regularized VIP \eqref{eq:objective_FL_reformulation}, $d=nd'$, $F(x)\triangleq\frac{1}{n}\sum_{i=1}^n F_i(x_i)$. $x=\autopar{x_1, x_2, \cdots, x_n}\in\mathbb{R}^d$, $x_i\in\mathbb{R}^{d'}$.
		\item $F:\mathbb{R}^d\rightarrow\mathbb{R}^d$, $F_i:\mathbb{R}^{d'}\rightarrow\mathbb{R}^d$ and $F_i(x_i)=(0, \cdots, f_i(x_i), \cdots, 0)$ where $[F(x)]_{(id'+1):(id'+d')}=f_i(x_i)$.  
		\item $\sigma_*^2 \triangleq \Exp \left[\|g(x^*) - F(x^*)\|^2\right] <+\infty$
		\item $\Delta$: (uniform) bounded variance of operator estimator
	\end{itemize}
	
	\newpage
	
	\section{Further Pseudocodes}\label{apdx:pseudocodes}
	Here we present the pseudocodes of the algorithms that due to space limitation did not fit into the main paper. 
	
	We present the ProxSkip-L-SVRGDA method in Algorithm~\ref{alg:Stoc-ProxSkip-L-SVRGDA}, which can be seen as a ProxSkip generalization of the L\nobreakdash-SVRGDA algorithm proposed in \cite{beznosikov2022stochastic}. For more details, please refer to Section~\ref{sec:centralized_special_case}. 
	Note that the unbiased estimator in this method has the form:
	$g_t=F_{j_t}(x_t)-F_{j_t}(w_t)+F(w_t).$
	
	\begin{algorithm}[h]
		\caption{ProxSkip-L-SVRGDA}
		\label{alg:Stoc-ProxSkip-L-SVRGDA}
		\begin{algorithmic}[1]
			\REQUIRE Initial point $x_0, h_0$, parameters $\gamma$, probabilities $p,q\in[0,1]$,
			number of iterations $T$
			\STATE Set $w_0=x_0$, compute $F(w_0)$
			\FORALL{$t = 0,1,..., T$}
			\STATE Construct $\colorword{g_t}{red}=F_{j_t}(x_t)-F_{j_t}(\colorword{w_t}{blue})+F(\colorword{w_t}{blue})$, $j_t\in[n]$
			\STATE Update $\colorword{w_{t+1}}{blue}=
			\begin{cases}
				x_t & \text{w.p. } q\\
				\colorword{w_t}{blue} & \text{w.p. } 1-q
			\end{cases}$
			\STATE $\widehat{x}_{t+1}=x_t-\gamma (\colorword{g_t}{red}-h_t)$
			\STATE Flip a coin $\theta_t$, $\theta_t=1$ w.p. $p$, otherwise $0$
			\IF{$\theta_t=1$}
			\STATE $x_{t+1}=\prox_{\frac{\gamma}{p} R}\autopar{\widehat{x}_{t+1}-\frac{\gamma}{p} h_t}$
			\ELSE
			\STATE $x_{t+1}=\widehat{x}_{t+1}$
			\ENDIF
			\STATE $h_{t+1}=h_t+\frac{p}{\gamma}(x_{t+1}-\widehat{x}_{t+1})$ 
			\ENDFOR
			\ENSURE $x_T$
		\end{algorithmic}
	\end{algorithm}
	
	Moreover, we present the ProxSkip-L-SVRGDA-FL algorithm below in Algorithm~\ref{alg:ProxSkip-L-SVRGDA-FL}. Similar to the relationship between ProxSkip-SGDA and ProxSkip-SGDA-FL, here this algorithm is the implementation of the ProxSkip-L-SVRGDA method mentioned above in the FL regime. For more details, please check Section~\ref{asdas} of the main paper.
	
	\begin{algorithm}[h]
		\caption{ProxSkip-L-SVRGDA-FL}
		\label{alg:ProxSkip-L-SVRGDA-FL}
		\begin{algorithmic}[1]
			\REQUIRE Initial points $x_{1,0},\cdots,x_{n,0}\in\mathbb{R}^{d'}$, $\gamma, p\in\mathbb{R}$, initial control variates $h_{1,0}, \cdots, h_{n,0}=0\in\mathbb{R}^{d'}$,
			iteration number $T$
			\STATE Set $\colorword{w_{i,0}}{blue}=x_{i,0}$ for every worker $i\in[n]$
			\FORALL{$t = 0,1,..., T$}
			\STATE \textbf{Server:} 
			Flip two coins $\theta_t$ and $\zeta_t$, where $\theta_t=1$ w.p. $p$ and $\zeta_t=1$ w.p. $q$, otherwise 0.
			Send $\theta_t$ and $\zeta_t$ to all workers
			\FOR{{\bf each workers $i\in\automedpar{n}$ in parallel}} 
			\STATE $\colorword{g_{i,t}}{red}=F_{i, j_t}(x_{i,t})-F_{i, j_t}(\colorword{w_{i,t}}{blue})+F_i(\colorword{w_{i,t}}{blue})$, where $j_t\sim\text{Unif}([m_i])$
			\STATE Update $\colorword{w_{i, t+1}}{blue}=
			\begin{cases}
				x_{i,t} & \text{if } \zeta_t=1\\
				\colorword{w_{i,t}}{blue} & \text{otherwise}
			\end{cases}$
			\STATE $\widehat{x}_{i, t+1}=x_{i, t}-\gamma (\colorword{g_{i,t}}{red}-h_{i,t})$       
			\IF{$\theta_t=1$}
			\STATE Worker: $x_{i, t+1}'=\widehat{x}_{i, t+1}-\frac{\gamma}{p} h_{i,t}$, sends $x_{i, t+1}'$ to the server
			\STATE Server: computes $x_{i, t+1}=\frac{1}{n}\sum_{i=1}^n x_{i, t+1}'$, sends $x_{i, t+1}$ to workers
			\COMMENT{Communication}
			\ELSE
			\STATE $x_{i, t+1}=\widehat{x}_{i, t+1}$
			\COMMENT{Otherwise skip the communication step}
			\ENDIF
			\STATE $h_{i, t+1}=h_{i, t}+\frac{p}{\gamma}(x_{i, t+1}-\widehat{x}_{i, t+1})$ 
			\ENDFOR
			\ENDFOR
			\ENSURE $x_T$
		\end{algorithmic}
	\end{algorithm}
	
	\newpage
	
	\section{Useful Lemmas}\label{apdx:useful_lemmas}
	\begin{lemma}[Young's Inequality]
		\begin{eqnarray}
			\|a + b\|^2 \leq 2\|a\|^2 + 2 \|b\|^2 \label{eq:YoungsInequality} 
		\end{eqnarray}
	\end{lemma}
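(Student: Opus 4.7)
The plan is straightforward for this elementary inequality, since it follows directly from the inner-product expansion of the squared norm combined with a one-line bound on the cross term. First, I would expand the left-hand side as $\|a+b\|^2 = \|a\|^2 + 2\langle a,b\rangle + \|b\|^2$ using the bilinearity of the inner product. Second, I would handle the mixed term by appealing to the nonnegativity $0 \le \|a-b\|^2 = \|a\|^2 - 2\langle a,b\rangle + \|b\|^2$, which rearranges to $2\langle a,b\rangle \le \|a\|^2 + \|b\|^2$ (equivalently, this is AM-GM applied after Cauchy--Schwarz). Substituting this bound back into the expansion gives $\|a+b\|^2 \le 2\|a\|^2 + 2\|b\|^2$, which is exactly the claim.

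There is essentially no obstacle in carrying out this proof; the entire argument is a two-line calculation, and the constant $2$ on the right-hand side is tight (achieved when $a=b$). The only minor thing worth noting for the downstream use of the lemma is that the same technique immediately yields the weighted version $\|a+b\|^2 \le (1+\alpha^{-1})\|a\|^2 + (1+\alpha)\|b\|^2$ for any $\alpha > 0$, obtained by replacing the symmetric bound with $2\langle a,b\rangle \le \alpha^{-1}\|a\|^2 + \alpha\|b\|^2$; this generalization may be convenient when tuning constants in the later convergence proofs, even though the statement here only requires the symmetric $\alpha=1$ case.
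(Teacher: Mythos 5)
Your proof is correct and is exactly the standard argument; the paper states this lemma without proof (it is an elementary textbook fact), so there is nothing to diverge from. Your remark about the weighted variant $\|a+b\|^2 \le (1+\alpha^{-1})\|a\|^2 + (1+\alpha)\|b\|^2$ is accurate but not needed anywhere in the paper's analysis, which only invokes the symmetric case.
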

	\label{apdx:useful_lemma}
	\begin{lemma}
		For any optimal solution $x^*$ of \eqref{eq:objective_FL_reformulation} and any $\alpha>0$, we have
		\begin{equation}
			x^*=\prox_{\alpha R}\autopar{x^*-\alpha F(x^*)}.
		\end{equation}
	\end{lemma}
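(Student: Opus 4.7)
The plan is to prove this via the standard fixed-point characterization: unfold the proximal operator into a subdifferential inclusion, and then show that this inclusion is exactly equivalent to the VIP defining inequality in \eqref{eq:objective_FL_reformulation}. First I would use the definition $\prox_{\alpha R}(y) = \argmin_v \{ R(v) + \frac{1}{2\alpha}\|v - y\|^2 \}$ with $y = x^* - \alpha F(x^*)$. Since $R$ is proper lower semicontinuous convex (Assumption~\ref{assume:main}, item 2) and the quadratic term is strongly convex, the minimizer is unique, so verifying $x^* = \prox_{\alpha R}(x^* - \alpha F(x^*))$ amounts to verifying the first-order optimality condition at $v = x^*$.

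Writing out that optimality condition using the Fermat rule for convex functions, $x^*$ minimizes the proximal objective if and only if $0 \in \partial R(x^*) + \frac{1}{\alpha}\bigl(x^* - (x^* - \alpha F(x^*))\bigr)$, i.e. $-F(x^*) \in \partial R(x^*)$. Next I would invoke the definition of the convex subdifferential: this inclusion holds iff $R(x) \geq R(x^*) + \langle -F(x^*), x - x^* \rangle$ for every $x \in \mathbb{R}^d$, which rearranges directly to
\begin{equation*}
\langle F(x^*), x - x^* \rangle + R(x) - R(x^*) \geq 0, \qquad \forall x \in \mathbb{R}^d.
\end{equation*}
This is precisely the defining inequality of \eqref{eq:objective_FL_reformulation}, so the forward direction (solution $\Rightarrow$ fixed point) is done. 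Since every step above is an equivalence, the converse also follows, though only the forward direction is used in the lemma.

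The only subtlety, rather than a true obstacle, is that in the FL reformulation \eqref{eq:mpFL_VIP_form} the regularizer $R$ is extended-valued, so one should note at the outset that $x^*$ must lie in $\dom R$ for the VIP inequality to be non-vacuous; the subdifferential calculus then applies in the usual sense for proper convex lsc functions. No Lipschitz, strong monotonicity, or stochasticity is needed here — the result is a purely variational characterization and follows entirely from the convexity of $R$ and the definitions of $\prox$ and of a solution to \eqref{eq:objective_FL_reformulation}.
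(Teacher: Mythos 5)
Your proof is correct, and there is nothing in the paper to diverge from: the lemma is stated in the Useful Lemmas appendix without proof, being the classical fixed-point characterization of solutions to regularized VIPs. Your chain of equivalences — strong convexity of the proximal objective, Fermat's rule giving $-F(x^*)\in\partial R(x^*)$, and the subdifferential inequality being precisely the defining inequality of \eqref{eq:objective_FL_reformulation} — is the standard textbook derivation, and your side remark that any solution automatically lies in $\dom R$ (otherwise the VIP inequality would fail against any $x\in\dom R$) correctly handles the only subtlety arising from the extended-valued consensus regularizer \eqref{eq:mpFL_VIP_form}.
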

	
	\begin{lemma}[Firm Nonexpansivity of the Proximal Operator \citep{beck2017first}]
		\label{lm:firm_nonexpansive_proximal}
		Let $f$ be a proper closed and convex function, then for any $x, y\in\mathbb{R}^d$ we have
		\begin{equation}\label{eq:firm_nonexpansive_proximal_eq1}
			\autoprod{x-y, \prox_{f}(x)-\prox_{f}(y)}\geq\autonorm{\prox_{f}(x)-\prox_{f}(y)}^2,
		\end{equation}
		or equivalently,
		\begin{equation}\label{eq:firm_nonexpansive_proximal_eq2}
			\autonorm{\autopar{x-\prox_{f}(x)}-\autopar{y-\prox_{f}(y)}}^2+\autonorm{\prox_{f}(x)-\prox_{f}(y)}^2\leq
			\autonorm{x-y}^2.
		\end{equation}
	\end{lemma}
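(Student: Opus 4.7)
The plan is to reduce both inequalities to the monotonicity of the subdifferential of $f$. First I would denote $u = \prox_f(x)$ and $v = \prox_f(y)$ and recall the defining property of the proximal map: since $u$ is the unique minimizer of the strongly convex function $w \mapsto f(w) + \tfrac{1}{2}\autonorm{w - x}^2$, Fermat's rule together with the sum rule for subdifferentials yields $0 \in \partial f(u) + (u - x)$, i.e. $x - u \in \partial f(u)$. Analogously $y - v \in \partial f(v)$.

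Next, because $f$ is proper, closed, and convex, its subdifferential is a monotone operator, so $\autoprod{s_1 - s_2, u - v} \geq 0$ for any $s_1 \in \partial f(u)$ and $s_2 \in \partial f(v)$. Specializing to $s_1 = x - u$ and $s_2 = y - v$ gives $\autoprod{(x - y) - (u - v), u - v} \geq 0$, which rearranges directly to the first inequality $\autoprod{x - y, \prox_f(x) - \prox_f(y)} \geq \autonorm{\prox_f(x) - \prox_f(y)}^2$ stated in \eqref{eq:firm_nonexpansive_proximal_eq1}.

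For the second inequality, I would exploit the decomposition $x - y = [(x - u) - (y - v)] + (u - v)$ and expand $\autonorm{x-y}^2$. The expansion yields three pieces, with the cross term being exactly $2\autoprod{(x-u)-(y-v), u-v}$. By the inequality established in the previous paragraph this cross term equals $2\bigl(\autoprod{x-y, u-v} - \autonorm{u-v}^2\bigr) \geq 0$, so discarding it produces $\autonorm{(x-u)-(y-v)}^2 + \autonorm{u-v}^2 \leq \autonorm{x-y}^2$, which is \eqref{eq:firm_nonexpansive_proximal_eq2}. The converse direction (and hence the claim of equivalence) follows by running the same polarization identity backwards.

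The main obstacle is essentially bookkeeping: verifying that the subdifferential characterization of $\prox_f$ applies, which requires $f$ to be proper, closed, and convex so that the prox is well-defined and single-valued and so that $\partial f$ is a (maximal) monotone operator. Once this foundation is in place, the argument amounts to one application of subdifferential monotonicity followed by a single use of the polarization identity, with no further analytic subtleties.
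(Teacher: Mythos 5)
Your proof is correct: the characterization $x-\prox_f(x)\in\partial f(\prox_f(x))$, monotonicity of $\partial f$, and the expansion of $\autonorm{x-y}^2$ give exactly \eqref{eq:firm_nonexpansive_proximal_eq1} and \eqref{eq:firm_nonexpansive_proximal_eq2}, and the two inequalities are indeed algebraically equivalent. The paper itself does not prove this lemma but cites it to \citep{beck2017first}, and your argument is essentially the standard proof given there (via the subdifferential/second-prox characterization), so there is no substantive difference in approach.
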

	\newpage
	\section{Proofs of Lemmas \ref{thm:property_local_estimator} and \ref{lm:ABC_SVRG}}\label{apdx:further_lemmas}
	As we mentioned in the main paper, the Lemmas \ref{thm:property_local_estimator} and \ref{lm:ABC_SVRG} have been proved in~\cite{beznosikov2022stochastic}. We include the proofs of these results using our notation for completeness. 
	
	\begin{proof}[Proof of Lemma \ref{thm:property_local_estimator}]
		Note that
		\begin{equation*}
			\begin{split}
				\EE\autonorm{g_t-F(x^*)}^2
				\overset{\eqref{eq:YoungsInequality}}{\leq}\ &
				2\EE\autonorm{g_t-g(x^*)}^2+2\EE\autonorm{g(x^*)-F(x^*)}^2\\
				\leq\ &
				2L_g\autoprod{F(x_t)-F(x^*),x_t-x^*}+2\sigma_*^2,
			\end{split}
		\end{equation*}
		where the second inequality uses Assumption~\ref{assume:ECC}. The statement of Lemma \ref{thm:property_local_estimator} is obtained by comparing the above inequality with the expression of Assumption~\ref{assume:stochastic}.
	\end{proof}
	
	\begin{proof}[Proof of Lemma \ref{lm:ABC_SVRG}]
		For the property of $g_t$, it is easy to find that the unbiasedness holds. Then note that
		\begin{eqnarray*}
			&&\mathbb{E}\autonorm{g_t-F(x^*)}^2\\
			&=&
			\mathbb{E}\autonorm{F_{j_t}(x_t)-F_{j_t}(w_t)+F(w_{j_t})-F(x^*)}^2\\
			&=&
			\frac{1}{n}\sum_{i=1}^n\autonorm{F_i(x_t)-F_{j_t}(w_t)+F(w_{j_t})-F(x^*)}^2\\
			&\overset{\eqref{eq:YoungsInequality}}{\leq} &
			\frac{2}{n}\sum_{i=1}^n\autonorm{F_i(x_t)-F_i(x^*)}^2+\autonorm{F_i(x^*)-F_i(w_t)-\autopar{F(x^*)-F(w_t)}}^2\\
			&\leq &
			\frac{2}{n}\sum_{i=1}^n\autonorm{F_i(x_t)-F_i(x^*)}^2+\autonorm{F_i(x^*)-F_i(w_t)}^2\\
			&\leq &
			2\widehat{\ell}\autoprod{F(x_t)-F(x^*), x_t-x^*}+\frac{2}{n}\sum_{i=1}^n\autonorm{F_i(x^*)-F_i(w_t)}^2\\
			&= &
			2\widehat{\ell}\autoprod{F(x_t)-F(x^*), x_t-x^*}+2\sigma_t^2,
		\end{eqnarray*}
		here the second inequality comes from the fact that $\text{Var}(X)\leq\EE (X^2)$, and the third inequality comes from Assumption~\ref{assume:average_star_coco}. Then for the second term above, we have
		\begin{equation*}
			\begin{split}
				\mathbb{E}\automedpar{\sigma_{t+1}^2}
				=\ &
				\frac{1}{n}\sum_{i=1}^n\autonorm{F_i(w_{t+1})-F_i(x^*)}^2\\
				=\ &
				\frac{1}{n}\sum_{i=1}^n\autopar{q\autonorm{F_i(x_t)-F_i(x^*)}^2+(1-q)\autonorm{F_i(w_t)-F_i(x^*)}^2}\\
				\leq\ &
				q\widehat{\ell}\autoprod{F(x_t)-F(x^*), x_t-x^*}+\frac{1-q}{n}\sum_{i=1}^n\autonorm{F_i(w_t)-F_i(x^*)}^2\\
				=\ &
				q\widehat{\ell}\autoprod{F(x_t)-F(x^*), x_t-x^*}+(1-q)\sigma_t^2,
			\end{split}
		\end{equation*}
		here the second equality applies the definition of $w_{t+1}$, and the first inequality is implied by Assumption~\ref{assume:ECC}.
		The statement of Lemma \ref{lm:ABC_SVRG} is obtained by comparing the above inequalities with the expressions of Assumption~\ref{assume:stochastic}.
	\end{proof}
	
	\newpage
	\section{Proofs of Main Convergence Results}\label{apdx:convergence_results}
	\subsection{Proof of Theorem \ref{thm:convergence_Stoc-ProxSkip-VIP}}
	\label{apdx:theorem_Stoc_ProxSkip_VIP}
	Here the proof originates from that of ProxSkip~\citep{mishchenko2022proxskip}, while we extend it to the variational inequality setting, and combines it with the more general setting on the stochastic oracle.
	
	\begin{proof}
		Note that following Algorithm \ref{alg:Stoc-ProxSkip-VIP}, we have with probability $p$:
		\begin{equation*}
			\begin{cases}
				x_{t+1}=\prox_{\frac{\gamma}{p} R}\autopar{\widehat{x}_{t+1}-\frac{\gamma}{p} h_t}\\
				h_{t+1}=h_t+\frac{p}{\gamma}\autopar{\prox_{\frac{\gamma}{p} R}\autopar{\widehat{x}_{t+1}-\frac{\gamma}{p} h_t}-\widehat{x}_{t+1}}
			\end{cases}
		\end{equation*}
		and with probability $1-p$:
		\begin{equation*}
			\begin{cases}
				x_{t+1}=\widehat{x}_{t+1}\\
				h_{t+1}=h_t,
			\end{cases}
		\end{equation*}
		and set
		\begin{equation*}
			V_t\triangleq\autonorm{x_t-x^*}^2+\autopar{\frac{\gamma}{p}}^2\autonorm{h_t- F(x^*)}^2+M\gamma^2\sigma_t^2,
		\end{equation*}
		For simplicity, we denote $P(x_t)\triangleq\prox_{\frac{\gamma}{p} R}\autopar{\widehat{x}_{t+1}-\frac{\gamma}{p} h_t}$, so we have
		\begin{eqnarray*}
			\mathbb{E}\automedpar{V_{t+1}} &=&
			p\autopar{\autonorm{P(x_t)-x^*}^2+\autopar{\frac{\gamma}{p}}^2\autonorm{h_t+\frac{p}{\gamma}\autopar{P(x_t)-\widehat{x}_{t+1}}- F(x^*)}^2}\\
			&&\qquad
			+ (1-p)\autopar{\autonorm{\widehat{x}_{t+1}-x^*}^2+\autopar{\frac{\gamma}{p}}^2\autonorm{h_t- F(x^*)}^2}+M\gamma^2\sigma_{t+1}^2\\
			&=&
			p\autopar{\autonorm{P(x_t)-x^*}^2+\autonorm{P(x_t)-(\widehat{x}_{t+1}-\frac{\gamma}{p}h_t)- \frac{\gamma}{p}F(x^*)}^2}\\
			&&\qquad + (1-p)\autopar{\autonorm{\widehat{x}_{t+1}-x^*}^2+\autopar{\frac{\gamma}{p}}^2\autonorm{h_t- F(x^*)}^2}+M\gamma^2\sigma_{t+1}^2
		\end{eqnarray*}
		next note that $x^*=\prox_{\frac{\gamma}{p} R}\autopar{x^*-\frac{\gamma}{p} F(x^*)}$, we have
		\begin{eqnarray*}
			&&\autonorm{P(x_t)-(\widehat{x}_{t+1}-\frac{\gamma}{p}h_t)- \frac{\gamma}{p}F(x^*)}^2\\
			&= &\autonorm{P(x_t)-(\widehat{x}_{t+1}-\frac{\gamma}{p}h_t)- \automedpar{\prox_{\frac{\gamma}{p} R}\autopar{x^*-\frac{\gamma}{p} F(x^*)}-(x^*-\frac{\gamma}{p}F(x^*))}}^2
		\end{eqnarray*}
		so by Lemma \ref{lm:firm_nonexpansive_proximal}, we have
		\begin{eqnarray*}
			\mathbb{E}\automedpar{V_{t+1}} &
			\overset{\eqref{eq:firm_nonexpansive_proximal_eq2}}{\leq} & p\autonorm{\widehat{x}_{t+1}-\frac{\gamma}{p}h_t-x^*+\frac{\gamma}{p} F(x^*)}^2 + (1-p)\autopar{\autonorm{\widehat{x}_{t+1}-x^*}^2+\autopar{\frac{\gamma}{p}}^2\autonorm{h_t- F(x^*)}^2}\\
			&&+M\gamma^2\sigma_{t+1}^2\\
			&=& \autonorm{\widehat{x}_{t+1}-x^*}^2+\autopar{\frac{\gamma}{p}}^2\autonorm{h_t- F(x^*)}^2-2\frac{\gamma}{p}p\autoprod{\widehat{x}_{t+1}-x^*, h_t-F(x^*)}+M\gamma^2\sigma_{t+1}^2,
		\end{eqnarray*}
		let 
		\begin{equation*}
			w_t\triangleq x_t-\gamma g(x_t),\quad
			w^*\triangleq x^*-\gamma F(x^*),
		\end{equation*}
		so we have
		\begin{eqnarray*}
			&&\autonorm{\widehat{x}_{t+1}-x^*}^2-2\frac{\gamma}{p}p\autoprod{\widehat{x}_{t+1}-x^*, h_t-F(x^*)}\\
			&=& \autonorm{w_t-w^*+\gamma\autopar{h_t- F(x^*)}}^2-2\gamma\autoprod{w_t-w^*+\gamma\autopar{h_t- F(x^*)}, h_t-F(x^*)}\\
			&=& \autonorm{w_t-w^*}^2-\gamma^2\autonorm{h_t- F(x^*)}^2\\
			&=& \autonorm{w_t-w^*}^2-p^2\autopar{\frac{\gamma}{p}}^2\autonorm{h_t- F(x^*)}^2,
		\end{eqnarray*}
		so we have
		\begin{equation}
			\label{eq:V-t+1_intermediate}
			\mathbb{E}\automedpar{V_{t+1}}
			\leq
			\autonorm{w_t-w^*}^2+\autopar{1-p^2}\autopar{\frac{\gamma}{p}}^2\autonorm{h_t- F(x^*)}^2+M\gamma^2\sigma_{t+1}^2.
		\end{equation}
		Then by the standard analysis on GDA, we have
		\begin{eqnarray*}
			\autonorm{w_t-w^*}^2 
			&=& 
			\autonorm{x_t-x^*-\gamma \autopar{g(x_t)-F(x^*)}}^2\\
			&=&
			\autonorm{x_t-x^*}^2-2\gamma\autoprod{g(x_t)-F(x^*), x_t-x^*}+\gamma^2 \autonorm{g(x_t)-F(x^*)}^2,
		\end{eqnarray*}
		take the expectation, by Assumption~\ref{assume:stochastic}, we have
		\begin{equation*}
			\begin{split}
				\mathbb{E}\automedpar{\autonorm{w_t-w^*}^2}
				=\ &
				\autonorm{x_t-x^*}^2-2\gamma\autoprod{F(x_t)-F(x^*), x_t-x^*}+\gamma^2 \mathbb{E}\automedpar{\autonorm{g(x_t)-F(x^*)}^2}\\
				\leq\ &
				\autonorm{x_t-x^*}^2-2\gamma\autopar{1-\gamma A}\autoprod{F(x)-F(x^*), x-x^*}+\gamma^2\autopar{B\sigma_t^2+D_1},
			\end{split}
		\end{equation*}
		substitute the above result into \eqref{eq:V-t+1_intermediate}, we have
		\begin{eqnarray*}
			\mathbb{E}\automedpar{V_{t+1}} & \overset{\eqref{eq:V-t+1_intermediate}}{\leq} &
			\mathbb{E}\automedpar{\autonorm{w_t-w^*}^2+\autopar{1-p^2}\autopar{\frac{\gamma}{p}}^2\autonorm{h_t- F(x^*)}^2+M\gamma^2\sigma_{t+1}^2}\\
			&\leq& \mathbb{E}\Big[\autonorm{x_t-x^*}^2-2\gamma\autopar{1-\gamma A}\autoprod{F(x)-F(x^*), x-x^*}+\gamma^2\autopar{B\sigma_t^2+D_1}\\
			&& +\autopar{1-p^2}\autopar{\frac{\gamma}{p}}^2\autonorm{h_t- F(x^*)}^2 +M\gamma^2\sigma_{t+1}^2 \Big]\\
			&\leq &
			\mathbb{E}\Big[\autonorm{x_t-x^*}^2-2\gamma\autopar{1-\gamma (A+MC)}\autoprod{F(x)-F(x^*), x-x^*}+M\gamma^2(1-\rho)\sigma_t^2\\
			&& +\gamma^2\autopar{B\sigma_t^2+D_1+MD_2}+\autopar{1-p^2}\autopar{\frac{\gamma}{p}}^2\autonorm{h_t- F(x^*)}^2\Big]\\
			&\leq &
			\mathbb{E}\Big[\autopar{1-2\gamma\mu\autopar{1-\gamma (A+MC)}}\autonorm{x_t-x^*}^2+\autopar{1-p^2}\autopar{\frac{\gamma}{p}}^2\autonorm{h_t- F(x^*)}^2 \\
			&& +M\gamma^2\autopar{1-\rho+\frac{B}{M}}\sigma_t^2+\gamma^2\autopar{D_1+MD_2}\Big],
		\end{eqnarray*}
		here the third inequality comes from Assumption~\ref{assume:stochastic} on $\sigma_{t+1}^2$, and the fourth inequality comes from quasi-strong monotonicity in Assumption~\ref{assume:main}.
		Recall that $\gamma\leq\frac{1}{2(A+MC)}$, we have
		\begin{eqnarray*}
			&&\mathbb{E}\automedpar{V_{t+1}}\\
			&\leq& \mathbb{E}\automedpar{\autopar{1-\gamma\mu}\autonorm{x_t-x^*}^2+\autopar{1-p^2}\autopar{\frac{\gamma}{p}}^2\autonorm{h_t- F(x^*)}^2+\autopar{1-\rho+\frac{B}{M}}M\gamma^2\sigma_t^2+\gamma^2\autopar{D_1+MD_2}}\\
			&\leq& \mathbb{E}\automedpar{\autopar{1-\min\autobigpar{\gamma\mu, p^2, \rho-\frac{B}{M} }}V_t}+\gamma^2\autopar{D_1+MD_2},
		\end{eqnarray*}
		by taking the full expectation and telescoping, we have
		\begin{eqnarray*}
			\mathbb{E}\automedpar{V_T}&\leq&
			\autopar{1-\min\autobigpar{\gamma\mu, p^2, \rho-\frac{B}{M} }}\mathbb{E}\automedpar{V_{T-1}}+\gamma^2\autopar{D_1+MD_2}\\
			&=&
			\autopar{1-\min\autobigpar{\gamma\mu, p^2, \rho-\frac{B}{M} }}^TV_0+\gamma^2\autopar{D_1+MD_2}\sum_{i=0}^{T-1}\autopar{1-\min\autobigpar{\gamma\mu, p^2, \rho-\frac{B}{M} }}^i\\
			&\leq& \autopar{1-\min\autobigpar{\gamma\mu, p^2, \rho-\frac{B}{M} }}^TV_0+\frac{\gamma^2\autopar{D_1+MD_2}}{\min\autobigpar{\gamma\mu, p^2, \rho-\frac{B}{M} }},
		\end{eqnarray*}
		the last inequality comes from the computation of a geometric series. which concludes the proof. Note that here we implicitly require that $\gamma\leq\frac{1}{\mu}$ and $M>\frac{B}{\rho}$. 
	\end{proof}
	
	\subsection{Proof of Corollary \ref{cor:Stoc-ProxSkip-VIP-Complexity}}
	\label{apdx:cor_ProxSkip_VIP_complexity}
	
	\begin{proof}
		With the above setting, we know that
		\begin{equation*}
			\min\autobigpar{\gamma\mu, p^2, \rho-\frac{B}{M} }
			=
			\gamma\mu,
		\end{equation*}
		and
		\begin{equation*}
			\mathbb{E}\automedpar{V_T}
			\leq
			\autopar{1-\gamma\mu}^TV_0+\frac{\gamma\autopar{D_1+\frac{2B}{\rho} D_2}}{\mu},
		\end{equation*}
		so it is easy to see that by setting
		\begin{equation*}
			T\geq\frac{1}{\gamma\mu}\ln\autopar{\frac{2V_0}{\epsilon}},
			\quad
			\gamma\leq\frac{\mu\epsilon}{2\autopar{D_1+\frac{2B}{\rho} D_2}},
		\end{equation*}
		we have
		\begin{equation*}
			\mathbb{E}\automedpar{V_T}
			\leq
			\epsilon,
		\end{equation*}
		which induces the iteration complexity to be
		\begin{equation*}
			T\geq
			\max\autobigpar{1, \frac{2(A+2BC/\rho)}{\mu}, \frac{2}{\rho}, \frac{2\autopar{D_1+\frac{2B}{\rho} D_2}}{\mu^2\epsilon}}\ln\autopar{\frac{2V_0}{\epsilon}},
		\end{equation*}
		and the corresponding number of calls to the proximal oracle is
		\begin{equation*}
			pT
			\geq
			\sqrt{\max\autobigpar{1, \frac{2(A+2BC/\rho)}{\mu}, \frac{2}{\rho}, \frac{2\autopar{D_1+\frac{2B}{\rho} D_2}}{\mu^2\epsilon}}}\ln\autopar{\frac{2V_0}{\epsilon}},
		\end{equation*}
		which concludes the proof.
	\end{proof}
	
	\subsection{Properties of Operators in VIPs}
	\label{apdx:thm_FL_Operator_Check}
	
	To make sure that the consensus form Problem~\eqref{eq:objective_FL_reformulation} fits with Assumption~\ref{assume:main}, and the corresponding operator estimators satisfies Assumption~\ref{assume:stochastic}, we provide the following results.
	
	\begin{proposition}
		\label{prop:tranform_equiv_sol}
		If Problem \eqref{eq:FedVIP} attains an unique solution $z^*\in\mathbb{R}^{d'}$, then Problem \eqref{eq:objective_FL_reformulation} attains an unique solution $x^*\triangleq(z^*, z^*, \cdots, z^*)\in\mathbb{R}^d$, and vice versa.
	\end{proposition}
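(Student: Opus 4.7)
The approach is to exploit the indicator-like structure of the regularizer $R$, which collapses the feasible set of \eqref{eq:objective_FL_reformulation} (in the sense of finite cost) to the consensus subspace, and then directly translate the variational inequality on that subspace into the one for \eqref{eq:FedVIP}. The key algebraic identity that drives everything is that when $x = (z, z, \ldots, z)$ and $x^* = (z^*, \ldots, z^*)$, the block structure of $F_i$ in \eqref{eq:mpFL_VIP_form} yields
\[
\autoprod{F(x^*), x - x^*} = \sum_{i=1}^n \autoprod{f_i(z^*), z - z^*} = n \autoprod{F(z^*), z - z^*},
\]
which is exactly (up to the harmless factor $n$) the variational inequality in \eqref{eq:FedVIP}.

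For the forward direction, I would first argue existence: set $x^* \triangleq (z^*, \ldots, z^*)$ so that $R(x^*) = 0$, and verify $\autoprod{F(x^*), x - x^*} + R(x) - R(x^*) \geq 0$ by splitting on the value of $R(x)$. If $R(x) = +\infty$ the inequality is trivial; if $R(x) = 0$ then $x = (z, \ldots, z)$ for some $z \in \mathbb{R}^{d'}$ and the inequality reduces to $n \autoprod{F(z^*), z - z^*} \geq 0$, which holds by \eqref{eq:FedVIP}. For uniqueness of the lifted solution, suppose $\tilde x$ also solves \eqref{eq:objective_FL_reformulation}. Plugging $x = x^*$ into the VI forces $R(\tilde x) < \infty$, hence $\tilde x = (\tilde z, \ldots, \tilde z)$ for some $\tilde z$; then testing with arbitrary consensus vectors $(z, \ldots, z)$ shows that $\tilde z$ satisfies \eqref{eq:FedVIP}, so $\tilde z = z^*$ by the assumed uniqueness of $z^*$, giving $\tilde x = x^*$.

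For the reverse direction, the main observation is again that any solution $x^*$ of \eqref{eq:objective_FL_reformulation} must have $R(x^*) < \infty$ (otherwise the VI cannot hold at any finite-$R$ test point), so $x^* = (z^*, \ldots, z^*)$ for some $z^*$. Testing the lifted VI only against consensus vectors $(z, \ldots, z)$ and dividing by $n$ recovers \eqref{eq:FedVIP} for $z^*$. Uniqueness of $z^*$ then follows by contrapositive: any second solution $\tilde z$ of \eqref{eq:FedVIP} would, by the forward direction, lift to a second solution $(\tilde z, \ldots, \tilde z) \neq x^*$ of \eqref{eq:objective_FL_reformulation}, contradicting uniqueness of $x^*$.

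I do not expect any serious obstacle here; the only thing to be careful about is the book-keeping for non-consensus test points, where one must separately handle the $R(x) = +\infty$ case so that the VI is not evaluated at a point outside the effective domain. The proof is essentially a direct verification, and the factor $n$ arising from summing the block-diagonal contributions of the $F_i$'s cancels in the VI since it appears on only one side and the inequality is homogeneous.
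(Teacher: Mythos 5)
Your proof is correct and follows essentially the same route as the paper: verify the lifted point $(z^*,\ldots,z^*)$ directly by splitting on whether $R(x)$ is $0$ or $+\infty$, and pass back to \eqref{eq:FedVIP} by testing the lifted VI only at consensus points. You additionally spell out two steps the paper merely asserts (that any solution of \eqref{eq:objective_FL_reformulation} must lie in the consensus set, and the two uniqueness arguments), and your bookkeeping of the factor $n$ coming from the sum versus average convention is accurate, so the write-up is sound.
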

	\begin{proof}
		Note that each $f_i$ is $\mu$-quasi-strongly monotone and $z^*$ is the unique solution to Problem \eqref{eq:FedVIP}.
		So for the operator $F$ in the reformulation \eqref{eq:objective_FL_reformulation}, first we check the point $x^*=(z^*, z^*, \cdots, z^*)$, note that for any $x=(x_1, x_2, \cdots, x_n)\in\mathbb{R}^d$, we have
		\begin{eqnarray*}
			\autoprod{F(x^*), x-x^*}+R(x)-R(x^*)&=&
			\autoprod{\sum_{i=1}^n F_i(x^*), x-x^*}+R(x)\\
			&=&\sum_{i=1}^n\autoprod{F_i(x^*), x-x^*}+R(x)\\
			&=&\sum_{i=1}^n\autoprod{f_i(z^*), x_i-z^*}+R(x),
		\end{eqnarray*}
		here the first equation incurs the definition of $R$, the third equation is due to the definition that $F_i$.
		Then for any $x\in\mathbb{R}^d$, if $\exists~x_i\neq x_j$ for some $i, j\in[n]$, we have $R(x)=+\infty$, so the RHS above is always positive. Then if $x_1=x_2=\cdots=x_n=x'\in\mathbb{R}^{d'}$, we have
		\begin{equation*}
			\sum_{i=1}^n\autoprod{f_i(z^*), x_i-z^*}+R(x)
			=
			\sum_{i=1}^n\autoprod{f_i(z^*), x'-z^*}
			=
			\autoprod{\sum_{i=1}^nf_i(z^*), x'-z^*}
			\geq
			0,
		\end{equation*}
		where the last inequality comes from the fact that $z^*$ is the solution to Problem \eqref{eq:FedVIP}. So $x^*$ is the solution to Problem \eqref{eq:objective_FL_reformulation}. It is easy to show its uniqueness by contradiction and the uniqueness of $z^*$, which we do not detail here.
		
		On the opposite side, first it is easy to see that the solution to \eqref{eq:objective_FL_reformulation} must come with the form $x^*=(z^*, z^*, \cdots, z^*)$, then we have for any $x=(x_1, x_2, \cdots, x_n)\in\mathbb{R}^d$
		\begin{equation*}
			\autoprod{F(x^*), x-x^*}+R(x)-R(x^*)
			=
			\sum_{i=1}^n\autoprod{f_i(z^*), x_i-z^*}+R(x),
		\end{equation*}
		then we select $x=(x', x', \cdots, x')$ for any $x'\in\mathbb{R}^{d'}$, so we have
		\begin{equation*}
			\autoprod{F(x^*), x-x^*}+R(x)-R(x^*)
			=
			\sum_{i=1}^n\autoprod{f_i(z^*), x'-z^*}
			=
			\autoprod{F(z^*), x'-z^*}\geq 0,
		\end{equation*}
		which corresponds to the solution of \eqref{eq:FedVIP}, and concludes the proof.
	\end{proof}
	
	\begin{proposition}
		\label{prop:FL_Operator_Check}
		With Assumption \ref{assume:additional_FL}, the Problem \eqref{eq:objective_FL_reformulation} attains an unique solution $x^*\triangleq(z^*, z^*, \cdots, z^*)\in\mathbb{R}^d$, the operator $F$ is $\mu$-quasi-strongly monotone, and the operator $g(x)\triangleq(g_1(x_1), g_2(x_2), \cdots, g_n(x_n))$ is an unbiased estimator of $F$, and it satisfies $L_g$-expected cocoercivity defined in Assumption \ref{assume:stochastic}.
	\end{proposition}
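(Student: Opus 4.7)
The plan is to reduce each claim to a per-coordinate-block computation, exploiting the fact that $F_i(x_i)$ is supported only on the $i$-th block of size $d'$, so that $F(x) = (f_1(x_1), f_2(x_2), \ldots, f_n(x_n)) \in \mathbb{R}^d$. With this block structure, both the Euclidean inner product and norm on $\mathbb{R}^d$ split as sums over the $n$ blocks, which is precisely what lets us lift the per-client assumptions on each $f_i$ and $g_i$ to the enlarged operator $F$ and its estimator $g$.

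First, I would dispatch the uniqueness claim $x^* = (z^*, \ldots, z^*)$ by simply invoking Proposition~\ref{prop:tranform_equiv_sol}, since Assumption~\ref{assume:additional_FL} already guarantees the unique solution $z^*$ of~\eqref{eq:FedVIP}. Next, for quasi-strong monotonicity of $F$, I would write
\begin{equation*}
\autoprod{F(x) - F(x^*), x - x^*} = \sum_{i=1}^n \autoprod{F_i(x_i) - F_i(z^*), x - x^*} = \sum_{i=1}^n \autoprod{f_i(x_i) - f_i(z^*), x_i - z^*},
\end{equation*}
using that only the $i$-th block of $F_i(x_i) - F_i(z^*)$ is nonzero. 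Applying the $\mu$-quasi-strong monotonicity of each $f_i$ around $z^*$ block-by-block and then recombining $\sum_i \|x_i - z^*\|^2 = \|x - x^*\|^2$ yields the desired inequality.

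For the estimator $g$, unbiasedness is immediate: $\mathbb{E}[g(x)] = (\mathbb{E}[g_1(x_1)], \ldots, \mathbb{E}[g_n(x_n)]) = (f_1(x_1), \ldots, f_n(x_n)) = F(x)$. For expected cocoercivity, the same block decomposition gives
\begin{equation*}
\mathbb{E}\autonorm{g(x) - g(x^*)}^2 = \sum_{i=1}^n \mathbb{E}\autonorm{g_i(x_i) - g_i(z^*)}^2 \leq L_g \sum_{i=1}^n \autoprod{f_i(x_i) - f_i(z^*), x_i - z^*} = L_g \autoprod{F(x) - F(x^*), x - x^*},
\end{equation*}
where the inequality is exactly the per-client expected cocoercivity from Assumption~\ref{assume:additional_FL}, and the final equality is the block identity already established.

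Honestly, this proposition is mostly bookkeeping; there is no real obstacle. The only subtle point is being careful that the ``inflation'' $f_i \mapsto F_i$ preserves the inner products $\autoprod{\cdot, \cdot}$ without picking up extra cross-terms, which is ensured by the disjoint block supports of the $F_i$'s. Once this block algebra is in hand, each of the four claims reduces to one line.
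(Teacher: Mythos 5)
Your proposal is correct and follows essentially the same route as the paper's proof: uniqueness is delegated to Proposition~\ref{prop:tranform_equiv_sol}, and the quasi-strong monotonicity, unbiasedness, and $L_g$-expected cocoercivity of the lifted operator and estimator are all obtained by the same block-wise decomposition of inner products and norms, summing the per-client conditions of Assumption~\ref{assume:additional_FL}. No gaps to report.
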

	\begin{proof}
		The uniqueness result comes from the above proposition. Then note that
		\begin{eqnarray*}
			\autoprod{F(x)-F(x^*), x-x^*} &= &
			\autoprod{\sum_{i=1}^n\autopar{F_i(x)-F_i(x^*)}, x-x^*} \\
			&=&\sum_{i=1}^n\autoprod{F_i(x)-F_i(x^*), x-x^*}\\
			&=& \sum_{i=1}^n\autoprod{f_i(x_i)-f_i(z^*), x_i-z^*}\\
			&\geq&\sum_{i=1}^n\mu\autonorm{x_i-z^*}^2 \\
			&=& \mu\autonorm{x-x^*}^2,
		\end{eqnarray*}
		which verifies the second statement. For the last statement, following the definition of the estimator $g$, 
		\begin{equation*}
			\mathbb{E}g(x^*)
			=
			\EE
			\begin{pmatrix}
				g_1(z^*)\\
				g_2(z^*)\\
				\vdots\\
				g_n(z^*)
			\end{pmatrix}
			=
			\begin{pmatrix}
				f_1(z^*)\\
				f_2(z^*)\\
				\vdots\\
				f_n(z^*)
			\end{pmatrix}
			=
			\sum_{i=1}^n F_i(x^*)=F(x^*),
		\end{equation*}
		which implies that it is an unbiased estimator of $F$.
		Then note that for any $x\in\mathbb{R}^d$,
		\begin{eqnarray*}
			\mathbb{E}\autonorm{g(x)-g(x^*)}^2 &=& \mathbb{E}\sum_{i=1}^n\autonorm{\autopar{g_i(x_i)-g_i(z^*)}}^2 \\
			&=& \sum_{i=1}^n\mathbb{E}\autonorm{g_i(x_i)-g_i(z^*)}^2\\
			&\leq& 
			\sum_{i=1}^nL_g \autoprod{f_i(x_i)-f_i(z^*), x_i-z^*}\\
			&=& 
			L_g\sum_{i=1}^n \autoprod{F_i(x)-F_i(x^*), x-x^*}\\
			&=& 
			L_g\autoprod{F(x)-F(x^*), x-x^*},
		\end{eqnarray*}
		here the inequality comes from the expected cocoercivity of each $g_i$ in Assumption~\ref{assume:additional_FL}, which concludes the proof.
	\end{proof}
	
	\subsection{Properties of Operators in Finite-Sum VIPs}
	\label{apdx:FL_operator_check}
	\begin{proposition}
		\label{prop:FL_Operator_Check_SVRG}
		With Assumption \ref{assume:additional_SVRG_FL}, the problem
		defined above
		attains an unique solution $x^*\triangleq(z^*, z^*, \cdots, z^*)\in\mathbb{R}^d$, the operator $F$ is $\frac{\mu}{n}$-quasi-strongly monotone, and the operator $g_t\triangleq(g_{1,t}, g_{2,t}, \cdots, g_{n,t})$ satisfies Assumption \ref{assume:stochastic} with 
		\begin{equation*}
			A=\widehat{\ell},\ B=2,\ C=\frac{q\widehat{\ell}}{2},\ \rho=q,\ D_1=D_2=0,\ \sigma_t^2=\sum_{i=1}^n\frac{1}{m_i}\sum_{j=1}^{m_i}\autonorm{F_{i,j}(z^*)-F_{i,j}(w_{i,t})}^2.
		\end{equation*}
	\end{proposition}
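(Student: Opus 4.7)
The plan is to exploit the block structure of the operators $F_i$---each supported only on the $i$-th coordinate block of size $d'$---so that every quantity of interest decouples across clients. Each per-client bound then follows from an immediate restatement of Lemma \ref{lm:ABC_SVRG} applied to the local L-SVRG estimator $g_{i,t}$ with finite-sum structure $F_i = \frac{1}{m_i}\sum_{j=1}^{m_i} F_{i,j}$, and summing over $i$ reassembles the global constants claimed.

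For the first two statements, uniqueness of $x^* = (z^*, \ldots, z^*)$ is an immediate consequence of Proposition \ref{prop:tranform_equiv_sol}, since Assumption \ref{assume:additional_SVRG_FL} already postulates a unique $z^*$. For quasi-strong monotonicity I would expand
\[
\langle F(x) - F(x^*), x - x^* \rangle = \sum_{i=1}^n \langle f_i(x_i) - f_i(z^*), x_i - z^* \rangle
\]
via the block structure, invoke the $\mu$-quasi-strong monotonicity of each $f_i$ at $z^*$, and collect terms through $\|x - x^*\|^2 = \sum_i \|x_i - z^*\|^2$, exactly mirroring the argument in Proposition \ref{prop:FL_Operator_Check}.

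The bulk of the proof concerns the estimator. Because $g_t - F(x^*)$ inherits the same block structure, we have
\[
\|g_t - F(x^*)\|^2 = \sum_{i=1}^n \|g_{i,t} - f_i(z^*)\|^2.
\]
Unbiasedness per block is straightforward: since $j_t \sim \mathrm{Unif}([m_i])$, we get $\mathbb{E}[F_{i,j_t}(x_{i,t}) - F_{i,j_t}(w_{i,t})] = F_i(x_{i,t}) - F_i(w_{i,t})$, hence $\mathbb{E}[g_{i,t}] = f_i(x_{i,t})$ and $\mathbb{E}[g_t] = F(x_t)$. I would then replay the single-client L-SVRG computation from the proof of Lemma \ref{lm:ABC_SVRG}, using the finite-sum star-cocoercivity supplied by Assumption \ref{assume:additional_SVRG_FL}, to obtain, for each $i$,
\[
\mathbb{E}\|g_{i,t} - f_i(z^*)\|^2 \leq 2\widehat{\ell}\,\langle f_i(x_{i,t}) - f_i(z^*), x_{i,t} - z^* \rangle + 2\sigma_{i,t}^2,
\]
together with $\mathbb{E}[\sigma_{i,t+1}^2] \leq q\widehat{\ell}\,\langle f_i(x_{i,t}) - f_i(z^*), x_{i,t} - z^* \rangle + (1-q)\sigma_{i,t}^2$, where $\sigma_{i,t}^2 = \frac{1}{m_i}\sum_{j=1}^{m_i} \|F_{i,j}(w_{i,t}) - F_{i,j}(z^*)\|^2$. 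Summing over $i$ and setting $\sigma_t^2 = \sum_i \sigma_{i,t}^2$ reassembles precisely the two inequalities of Assumption \ref{assume:stochastic} with $A = \widehat{\ell}$, $B = 2$, $C = q\widehat{\ell}/2$, $\rho = q$, and $D_1 = D_2 = 0$.

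The main (minor) subtlety is bookkeeping: one must check that the same constant $\widehat{\ell}$ appears in every client's local finite-sum cocoercivity, which is exactly how Assumption \ref{assume:additional_SVRG_FL} is phrased (uniform in $i$), so that summation across clients does not degrade the constant. Beyond that, the whole argument is a careful decomposition along the coordinate blocks followed by an application of the per-client L-SVRG estimate already established in Lemma \ref{lm:ABC_SVRG}.
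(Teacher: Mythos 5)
Your proposal is correct and follows essentially the same route as the paper: uniqueness and quasi-strong monotonicity are inherited via the block decomposition exactly as in Proposition \ref{prop:FL_Operator_Check}, and the estimator bounds are obtained by replaying the L-SVRG computation of Lemma \ref{lm:ABC_SVRG} under Assumption \ref{assume:additional_SVRG_FL}, with the per-client inequalities summed over $i$ (the paper simply carries the sum over clients through the same chain of inequalities rather than isolating each block first). No substantive difference or gap.
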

	\begin{proof}
		Here the proof is similar to that of Lemma \ref{lm:ABC_SVRG}. The conclusions on the solution $x^*$ and the quasi-strong monotonicity follow the same argument in the proof of Proposition \ref{prop:FL_Operator_Check}. For the property of $g_t$, it is easy to find that the unbiasedness holds. Then note that
		\begin{eqnarray*}
			&&\mathbb{E}\autonorm{g_t-F(x^*)}^2\\
			&=& \sum_{i=1}^n\mathbb{E}\autonorm{g_{i,t}-F_i(x^*)}^2 \\
			&=&
			\sum_{i=1}^n\mathbb{E}\autonorm{F_{i,j_t}(x_{i,t})-F_{i,j_t}(w_{i,t})+F_i(w_{i,j_t})-F_i(z^*)}^2\\
			&=&
			\sum_{i=1}^n\autopar{\frac{1}{m_i}\sum_{j=1}^{m_i}\autonorm{F_{i,j}(x_{i,t})-F_{i,j}(w_{i,t})+F_i(w_{i,t})-F_i(z^*)}^2}\\
			&\overset{\eqref{eq:YoungsInequality}}{\leq} &
			\sum_{i=1}^n\autopar{\frac{2}{m_i}\sum_{j=1}^{m_i}\autonorm{F_{i,j}(x_{i,t})-F_{i,j}(z^*)}^2+\autonorm{F_{i,j}(z^*)-F_{i,j}(w_{i,t})-\autopar{F_i(z^*)-F_i(w_{i,t})}}^2}\\
			&\leq &
			2\sum_{i=1}^n\autopar{\frac{1}{m_i}\sum_{j=1}^{m_i}\autonorm{F_{i,j}(x_{i,t})-F_{i,j}(z^*)}^2+\autonorm{F_{i,j}(z^*)-F_{i,j}(w_{i,t})}^2}\\
			&\leq &
			2\widehat{\ell}\sum_{i=1}^n\autoprod{f_i(x_{i,t})-f_i(z^*), x_{i,t}-z^*}+2\sum_{i=1}^n\frac{1}{m_i}\sum_{j=1}^{m_i}\autonorm{F_{i,j}(z^*)-F_{i,j}(w_{i,t})}^2\\
			&= &
			2\widehat{\ell}\sum_{i=1}^n\autoprod{F_i(x_{i,t})-F_i(z^*), x_t-x^*}+2\sum_{i=1}^n\frac{1}{m_i}\sum_{j=1}^{m_i}\autonorm{F_{i,j}(z^*)-F_{i,j}(w_{i,t})}^2\\
			&= &
			2\widehat{\ell}\autoprod{F(x_t)-F(x^*), x_t-x^*}+2\sigma_t^2,
		\end{eqnarray*}
		the second inequality comes from the fact that $\text{Var}(X)\leq\EE(X^2)$, and the third inequality is implied by Assumption~\ref{assume:additional_SVRG_FL}.
		Then for the second term above, we have
		\begin{equation*}
			\begin{split}
				\mathbb{E}\automedpar{\sigma_{t+1}^2}
				=\ &
				\sum_{i=1}^n\frac{1}{m_i}\sum_{j=1}^{m_i}\mathbb{E}\autonorm{F_{i,j}(w_{i,t+1})-F_{i,j}(z^*)}^2\\
				=\ &
				\sum_{i=1}^n\frac{1}{m_i}\sum_{j=1}^{m_i}\autopar{q\autonorm{F_{i,j}(x_{i,t})-F_{i,j}(z^*)}^2+(1-q)\autonorm{F_{i,j}(w_{i,t})-F_{i,j}(z^*)}^2}\\
				\leq\ &
				\sum_{i=1}^nq\widehat{\ell}\autoprod{f_i(x_{i,t})-f_i(z^*), x_{i,t}-z^*}+(1-q)\sum_{i=1}^n\frac{1}{m_i}\sum_{j=1}^{m_i}\autonorm{F_{i,j}(w_{i,t})-F_{i,j}(z^*)}^2\\
				=\ &
				q\widehat{\ell}\autoprod{F(x_t)-F(x^*), x_t-x^*}+(1-q)\sigma_t^2,
			\end{split}
		\end{equation*}
		here the second equality comes from the definition of $w_{i, t+1}$, and the inequality comes from Assumption~\ref{assume:additional_SVRG_FL}. So we conclude the proof.
	\end{proof}
	
	\subsection{Further Comparison of Communication Complexities}
	In Table~\ref{table:comparison_v2}, following the discussion (``Comparison with Literature") in Section~\ref{asdas}, we compare our convergence results to closely related works on stochastic local training methods. The Table shows the improvement in terms of communication and iteration complexities of our proposed ProxSkip-VIP-FL algorithms over methods like Local SGDA~ \citep{deng2021local}, Local SEG~\citep{beznosikov2020distributed} and FedAvg-S~\citep{hou2021efficient}.
	
	\begin{table}[htbp]
		\centering
		\small
		\renewcommand{\arraystretch}{2.25}
		\caption{
			Comparison of federated learning algorithms for solving VIPs with strongly monotone and Lipschitz operator. Comparison is in terms of both iteration and communication complexities. }
		\begin{threeparttable}[b]
			\begin{tabular}{c |
					>{\centering}p{0.07\textwidth}|
					>{\centering}p{0.33\textwidth}|
					>{\centering\arraybackslash}p{0.33\textwidth}}
				\hline \hline
				\textbf{Algorithm}
				& \textbf{Setting}\tnote{1}
				& \textbf{\# Communication}\tnote{2}
				& \textbf{\# Iteration}\\
				\hline \hline
				
				\makecell[c]{
					\textbf{Local SGDA}
					\\
					\citep{deng2021local}
				}
				& SM, LS
				& $\mathcal{O}\autopar{\sqrt{\frac{\kappa^2\sigma_*^2}{\mu\epsilon}}}$
				& $\mathcal{O}\autopar{\frac{\kappa^2\sigma_*^2}{\mu n\epsilon}}$
				\\
				\hline
				\makecell[c]{
					\textbf{Local SEG}
					\vspace{0.25em}
					\\
					\citep{beznosikov2020distributed}
				}            
				& SM, LS
				& $\mathcal{O}\autopar{\max\autopar{\kappa\ln\frac{1}{\epsilon}, \frac{p\Delta^2}{\mu^2n\epsilon}, \frac{\kappa \xi}{\mu\sqrt{\epsilon}}, \frac{\sqrt{p}\kappa \Delta}{\mu\sqrt{\epsilon}}}}$
				& $\mathcal{O}\autopar{\max\autopar{\frac{\kappa}{p}\ln\frac{1}{\epsilon}, \frac{\Delta^2}{\mu^2n\epsilon}, \frac{\kappa \xi}{p\mu\sqrt{\epsilon}}, \frac{\kappa \Delta}{\mu\sqrt{p\epsilon}}}}$
				\\
				\hline
				\makecell[c]{
					\textbf{FedAvg-S}
					\vspace{0.25em}
					\\
					\citep{hou2021efficient}
				}            
				& SM, LS
				& $\Tilde{\mathcal{O}}\autopar{\frac{p\Delta^2}{n\mu^2\epsilon}+\frac{\sqrt{p}\kappa \Delta}{\mu\sqrt{\epsilon}}+\frac{\kappa \xi}{\mu\sqrt{\epsilon}}}$
				& $\Tilde{\mathcal{O}}\autopar{\frac{\Delta^2}{n\mu^2\epsilon}+\frac{\kappa \Delta}{\mu\sqrt{p\epsilon}}+\frac{\kappa \xi}{p\mu\sqrt{\epsilon}}}$
				\\
				\hline
				\makecell[c]{
					\textbf{ProxSkip-VIP-FL}
					\vspace{0.25em}
					\\
					(This work)
				}            
				& SM, LS\tnote{3}
				& $\mathcal{O}\autopar{\sqrt{\max\autobigpar{\kappa^2, \frac{\sigma_*^2}{\mu^2\epsilon}}}\ln\frac{1}{\eps}}$
				& $\mathcal{O}\autopar{\max\autobigpar{\kappa^2, \frac{\sigma_*^2}{\mu^2\epsilon}}\ln\frac{1}{\eps}}$
				\\
				\hline
				\makecell[c]{
					\textbf{\scriptsize ProxSkip-L-SVRGDA-FL}
					\vspace{0.25em}
					\\
					(This work)\tnote{4}
				}            
				& SM, LS
				& $\mathcal{O}\autopar{\kappa\ln\frac{1}{\eps}}$
				& $\mathcal{O}\autopar{\kappa^2\ln\frac{1}{\eps}}$
				\\
				\hline \hline
			\end{tabular}
			\begin{tablenotes}
				\item[1] 
				SM: strongly monotone, LS: (Lipschitz) smooth. $\kappa\triangleq L/\mu$, $L$ and $\mu$ are the modulus of SM and LS. 
				$\sigma_*^2<+\infty$ is an upper bound of the variance of the stochastic operator at $x^*$. $\Delta$ is an (uniform) upper bound of the variance of the stochastic operator. 
				$\xi^2$ represents the bounded heterogeneity, i.e., $g_i(x;\xi_i)$ is an unbiased estimator of $f_i(x)$ for any $i\in\autobigpar{1,\dots,n}$, and $\xi_i^2(x)\triangleq\sup_{x\in\mathbb{R}^d}\autonorm{f_i(x)-F(x)}^2\leq \xi^2\leq +\infty$.
				\item[2]  $p$ is the probability of synchronization, we can take $p=\mathcal{O}(\sqrt{\epsilon})$, which recovers $\mathcal{O}(1/\sqrt{\epsilon})$ communication complexity dependence on $\epsilon$ in our result. $\Tilde{\mathcal{O}}(\cdot)$ hides the logarithmic terms.
				\item[3] Our algorithm works for quasi-strongly monotone and star-cocoercive operators, which is more general than the SM and LS setting, note that an $L$-LS and $\mu$-SM operator can be shown to be ($\kappa L$)-star-cocoercive~\citep{loizou2021stochastic}.
				\item[4] When we further consider the finite-sum form problems, we can turn to this algorithm.
			\end{tablenotes}
		\end{threeparttable}
		\label{table:comparison_v2}
	\end{table}
	
	\newpage
	\section{Details on Numerical Experiments}
	\label{apdx:numerical_experiments}
	In experiments, we examine the performance of ProxSkip-VIP-FL and ProxSkip-L-SVRGDA-FL. We compare ProxSkip-VIP-FL and ProxSkip-L-SVRGDA-FL algorithm with Local SGDA~\citep{deng2021local} and Local SEG~\citep{beznosikov2020distributed}, and the parameters are chosen according to corresponding theoretical convergence guarantees. Given any function $f(x_1, x_2)$, the $\ell$ co-coercivity parameter of the operator 
	\begin{eqnarray*}
		\begin{pmatrix}
			\nabla_{x_1} f(x_1, x_2)\\
			- \nabla_{x_2} f(x_1, x_2)
		\end{pmatrix}
	\end{eqnarray*} is given by $\frac{1}{\ell} = \min_{\lambda \in \text{Sp}(J)} \mathcal{R} \left(\frac{1}{\lambda} \right)$ \citep{loizou2021stochastic}. Here, $\text{Sp}$ denotes spectrum of the Jacobian matrix 
	\begin{eqnarray*}
		J = \begin{pmatrix}
			\nabla^2_{x_1, x_1} f & \nabla^2_{x_1, x_2} f \\
			-\nabla^2_{x_1, x_2} f & -\nabla^2_{x_2, x_2} f
		\end{pmatrix}.
	\end{eqnarray*}
	In our experiment, for the min-max problem of the form 
	\begin{eqnarray}\label{eq:finite_sum}
		\min_{x_1} \max_{x_2}\frac{1}{n} \sum_{i = 1}^n \frac{1}{m_i} \sum_{j = 1}^{m_i} f_{ij}(x_1, x_2),
	\end{eqnarray}
	we use stepsizes according to the following Table.
	
	\begin{center}
		\begin{threeparttable}[H]
			\renewcommand{\arraystretch}{2.25}
			\begin{tabular}{c | c | c}
				\hline \hline
				\textbf{Algorithm}
				& \textbf{Stepsize $\gamma$}
				& \textbf{Value of $p$}
				\\
				\hline \hline
				\makecell[c]{
					\textbf{ProxSkip-VIP-FL}
					\\
					(deterministic)
				}            
				& $\qquad\gamma = \frac{1}{2 \max_{i \in [n]} \ell_i}\qquad$ 
				& $\qquad p = \sqrt{\gamma \mu}\qquad$\\
				\hline
				\makecell[c]{
					\textbf{ProxSkip-VIP-FL}
					\\
					(stochastic)
				}            
				& $\gamma = \frac{1}{2 \max_{i, j} \ell_{ij}}$ & $p = \sqrt{\gamma \mu}$\\
				\hline
				\makecell[c]{
					\textbf{$\qquad$ProxSkip-L-SVRGDA-FL$\qquad$}
					\\
					(finite-sum)
				}    
				& $\gamma = \frac{1}{6 \max_{i,j} \ell_{ij}}$ & $p = \sqrt{\gamma \mu}$
				\\
				\hline \hline
			\end{tabular}
			\caption{Parameter settings for each algorithm. Here $\ell_i$ is the co-coercivity parameter corresponding to $\frac{1}{m_i} \sum_{j = 1}^{m_i} f_{ij}(x_1, x_2)$ and $\ell_{ij}$ is the co-coercivity parameter corresponding to $f_{ij}$.}
			
			\label{table:stepsize}
		\end{threeparttable}   
	\end{center}
	
	The parameters in Table \ref{table:stepsize} are selected based on our theoretical convergence guarantees, presented in the main paper.  In particular, for ProxSkip-VIP-FL, we use the stepsizes suggested in Theorem \ref{thm:complexity_FL_ProxSkip}, which follows the setting of Corollary~\ref{thm:convergence_ProxSkip-SGDA}. Note that the full-batch estimator (deterministic setting) of \eqref{eq:finite_sum} satisfies Assumption \ref{assume:additional_FL} when $L_g = \max_{i \in [n]} \ell_i$. Similarly, the stochastic estimators of \eqref{eq:finite_sum} satisfies Assumption \ref{assume:additional_FL} with $L_g = \max_{i,j} \ell_{ij}$. For the variance reduced method, ProxSkip-L-SVRGDA-FL, we use the stepsizes as suggested in Theorem~\ref{dnaoao} (which follows the setting in Corollary~\ref{cor:ProxSkip-L-SVRGDA-FL_complexity}) with $\hat{\ell} = \max_{i,j} \ell_{ij}$ for \eqref{eq:finite_sum}. For all methods, the probability of making the proximal update (communication in the FL regime) equals $p = \sqrt{\gamma \mu}$. For the 
	ProxSkip-L-SVRGDA-FL, following Corollary~\ref{cor:ProxSkip-L-SVRGDA-FL_complexity} we set $q=2\gamma\mu$.
	
	\subsection{Details on Robust least squares}
	The objective function of Robust Least Square is given by
	\begin{eqnarray*}
		G(\beta , y) = \|{\bf A}\beta - y\|^2 - \lambda \|y - y_0\|^2,
	\end{eqnarray*}
	for $\lambda > 0$. Note that 
	$$
	\nabla_{\beta}^2 G(\beta, y) = 2 \textbf{A}^{\top}\textbf{A},
	\quad
	\nabla^2_y G(\beta, y) = 2 (\lambda - 1)\textbf{I}.
	$$
	Therefore, for $\lambda > 1$, the objective function $G$ is strongly monotone with parameter $\min \{ 2 \lambda_{\min}(\textbf{A}^{\top}\textbf{A}), 2(\lambda - 1)\}$. Moreover, $G$ can be written as a finite sum problem, similar to \eqref{quadraticgame}, by decomposing the rows of matrix $\textbf{A}$ i.e. 
	$$
	G(\beta, y) = \sum_{i = 1}^r (A_i^{\top} \beta - y_i )^2 - \lambda (y_i - y_{0i})^2.
	$$ 
	Here $\mathbf{A}_i^{\top}$ denotes the $i$th row of the matrix $\mathbf{A}$. Now we divide the $r$ rows among $n$ nodes where each node will have $m = \nicefrac{r}{n}$ rows. Then we use the canonical basis vectors $e_i$ (vector with $i$-th entry $1$ and $0$ for other coordinates) to rewrite the above problem as follow
	\begin{eqnarray*}
		G(\beta, y) & = & \sum_{i = 1}^n \sum_{j = 1}^m  \left(A_{(i-1)m + j}^{\top}\beta - y_{(i-1)m + j} \right)^2 - \lambda \left(y_{(i-1)m + j} - y_{0((i-1)m + j)} \right)^2 \\
		& = & \sum_{i = 1}^n \sum_{j = 1}^m \beta^{\top} A_{(i-1)m + j} A_{(i-1)m + j}^{\top} \beta - 2 y_{(i-1)m + j} A_{(i-1)m + j}^{\top} \beta + y_{(i-1)m + j}^2\\
		&& - \lambda y_{(i-1)m + j}^2  - \lambda y_{0((i-1)m + j)}^2 + 2 \lambda y_{0((i-1)m + j)} y_{((i-1)m + j)}\\
		& = & \sum_{i = 1}^n \sum_{j = 1}^m \beta^{\top} \left( A_{(i-1)m + j} A_{(i-1)m + j}^{\top} \right) \beta  - y^{\top} \left(2 e_{(i-1)m + j} A_{(i-1)m + j}^{\top}\right)\beta \\
		&& - y^{\top} \left((\lambda - 1)e_{(i-1)m + j}e_{(i-1)m + j}^{\top} \right) y + \left(2 \lambda y_{0((i-1)m + j)} e_{((i-1)m + j)}^{\top} \right) y\\
		&& - \lambda y_{0((i-1)m + j)}^2 \\
		& = & \frac{nm}{n}\sum_{i = 1}^n \frac{1}{m} \sum_{j = 1}^m \beta^{\top} \left( A_{(i-1)m + j} A_{(i-1)m + j}^{\top} \right) \beta  - y^{\top} \left(2 e_{(i-1)m + j} A_{(i-1)m + j}^{\top}\right)\beta \\
		&& - y^{\top} \left((\lambda - 1)e_{(i-1)m + j}e_{(i-1)m + j}^{\top} \right) y + \left(2 \lambda y_{0((i-1)m + j)} e_{((i-1)m + j)}^{\top} \right) y \\
		&& - \lambda y_{0((i-1)m + j)}^2 .
	\end{eqnarray*}
	Therefore $G$ is equivalent to \eqref{quadraticgame} with $n$ nodes, $m_i = m = \nicefrac{r}{n}, x_1 = \beta, x_2 = y$ and 
	\begin{eqnarray*}
		f_{ij} (x_1, x_2) &=& x_1^{\top} \left( A_{(i-1)m + j} A_{(i-1)m + j}^{\top} \right) x_1  - x_2^{\top} \left(2 e_{(i-1)m + j} A_{(i-1)m + j}^{\top}\right)x_1 \\
		&& - x_2^{\top} \left((\lambda - 1)e_{(i-1)m + j}e_{(i-1)m + j}^{\top} \right) x_2 + \left(2 \lambda y_{0((i-1)m + j)} e_{((i-1)m + j)}^{\top} \right) x_2\\
		&& - \lambda y_{0((i-1)m + j)}^2.
	\end{eqnarray*}
	In Figure~\ref{fig: Comparison of ProxSkip-VIP-FL vs Local SGDA vs Local SEG on Heterogeneous Data in the stochastic setting.}, we run our experiment on the "California Housing" dataset from scikit-learn package \citep{pedregosa2011scikit}. This data consists of $8$ attributes of $200$ houses in the California region where the target variable $y_0$ is the price of the house. To implement the algorithms, we divide the data matrix $\textbf{A}$ among $20$ nodes, each node having an equal number of rows of $\textbf{A}$. Similar to the last example, here we also choose our ProxSkip-VIP-FL algorithm, Local SGDA, and Local SEG for comparison in the experiment, also we use $\lambda = 50$, and the theoretical stepsize choice is similar to the previous experiment.
	
	In Figure~\ref{fig:RLSxSynthetic}, we reevaluate the performance of ProxSkip on the Robust Least Square problem with synthetic data. For generating the synthetic dataset, we set $r = 200$ and $s = 20$. Then we sample $\textbf{A} \sim \mathcal{N}(0, 1), \beta_0 \sim \mathcal{N}(0, 0.1), \epsilon \sim \mathcal{N}(0, 0.01)$ and set $y_0 = \mathbf{A}\beta_0 + \epsilon$. In both deterministic (Figure~\ref{fig:RLSxDeterministicxSynthetic}) and stochastic (Figure~\ref{fig:RLSxStochasticxSynthetic}) setting, ProxSkip outperforms Local GDA and Local EG.  
	
	\begin{figure}[H]
		\centering
		\begin{subfigure}[b]{0.45\textwidth}
			\centering
			\includegraphics[width=\textwidth]{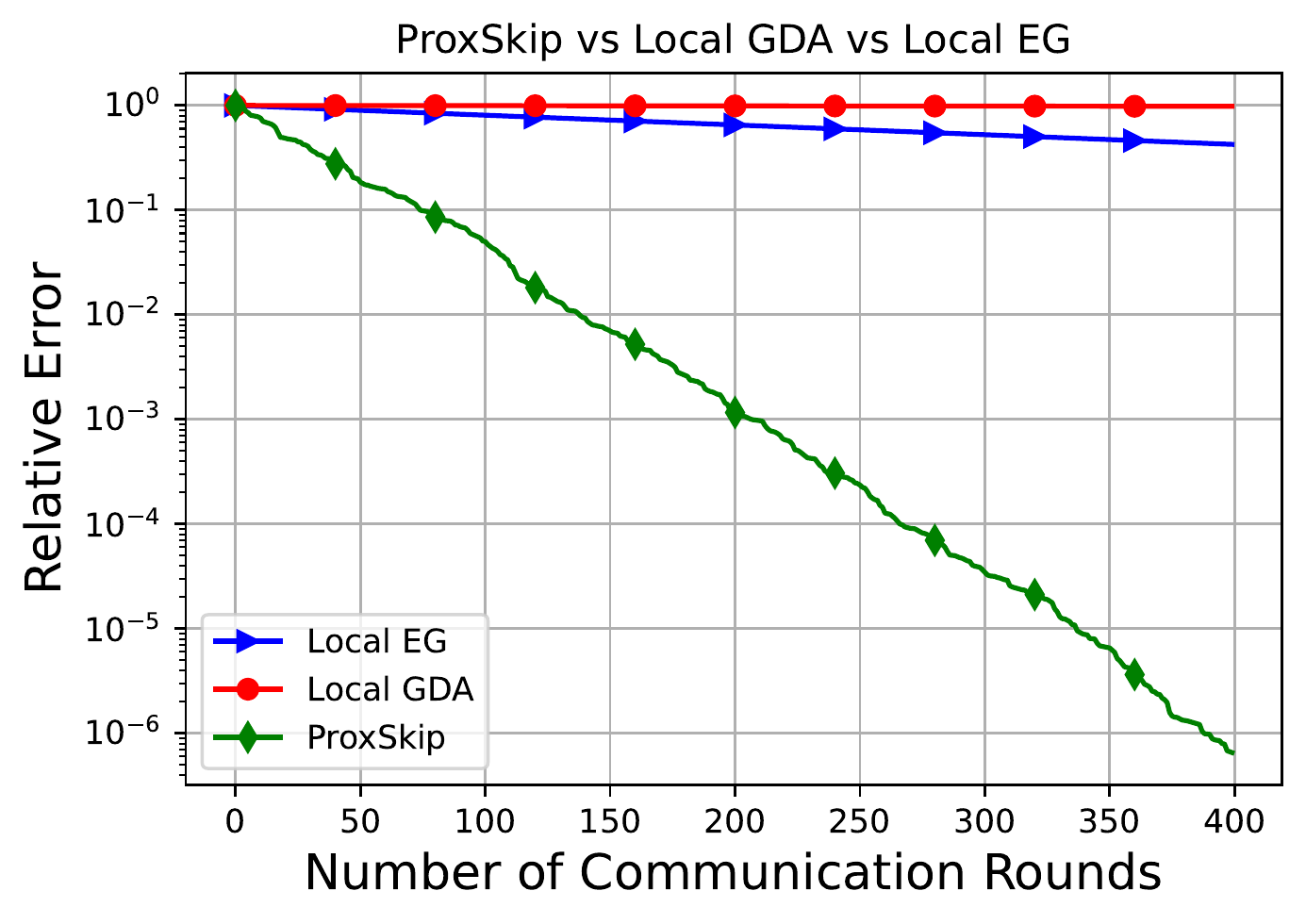}
			\caption{Deterministic Algorithms}
			\label{fig:RLSxDeterministicxSynthetic}
		\end{subfigure}
		\hspace{1em}
		\begin{subfigure}[b]{0.45\textwidth}
			\centering
			\includegraphics[width=\textwidth]{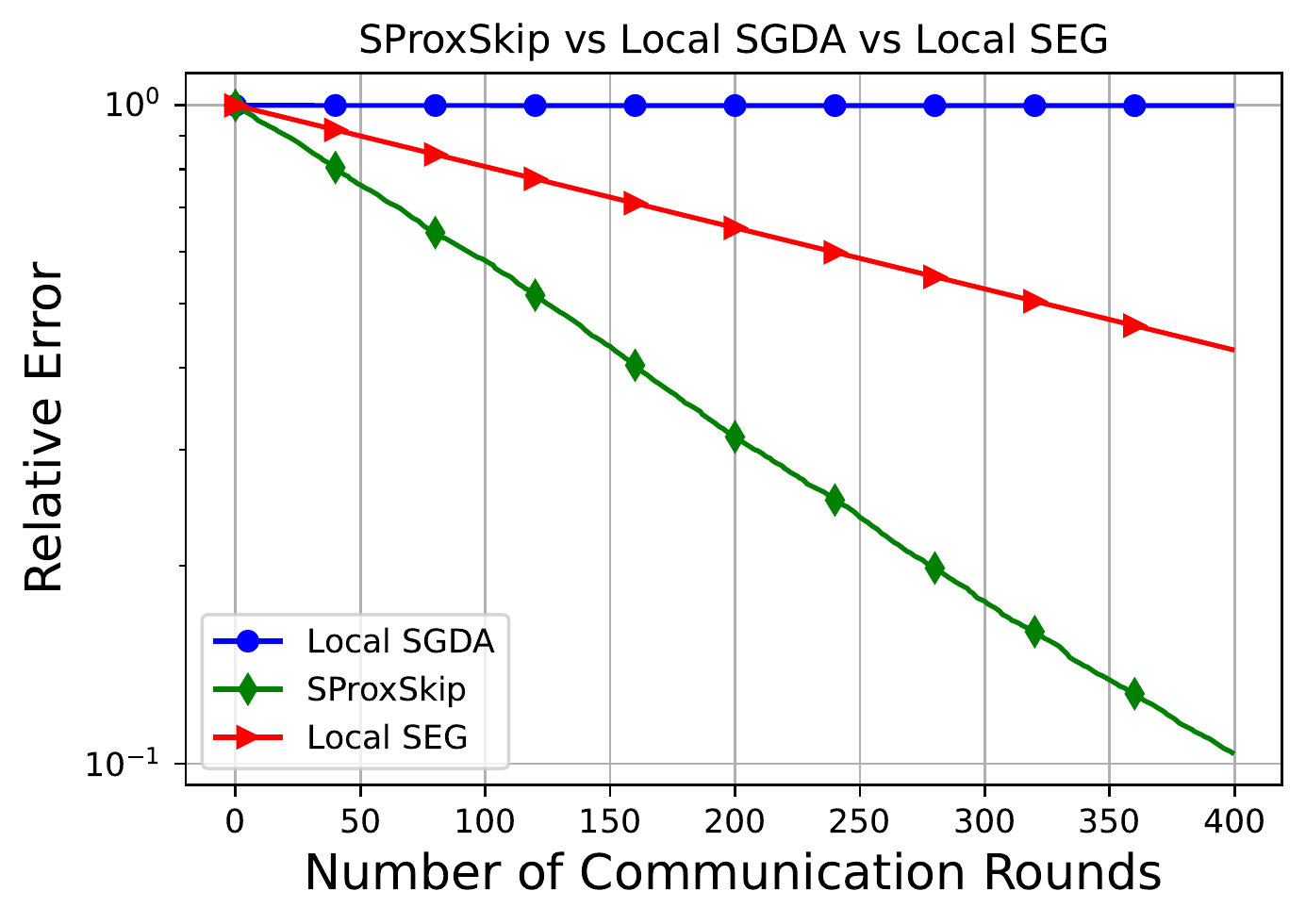}
			\caption{Stochastic Algorithms}
			\label{fig:RLSxStochasticxSynthetic}
		\end{subfigure}
		\caption{Comparison of algorithms on the Robust Least Square~\eqref{robustleastsquare} using synthetic dataset.}
		\label{fig:RLSxSynthetic}
		\vspace{-1em}
	\end{figure}
	
	\newpage
	\section{Additional Experiments}
	\label{apdx:more_experiment}
	
	Following the experiments presented in the main paper, we further evaluate the performance of the proposed methods in different settings (problems and stepsize selections). 
	
	\subsection{Fine-Tuned Stepsize}
	In Figure~\ref{fig:TunedSteps}, we compare the performance of ProxSkip against that of Local GDA and Local EG on the strongly monotone quadratic game~\eqref{quadraticgame} with heterogeneous data using tuned stepsizes. For tuning the stepsizes, we did a grid search on the set of $\frac{1}{rL}$ where 
	$r \in \{1, 2, 4, 8, 16, 64, 128, 256, 512, 1024, 2048\}$ and $L$ is the Lipschitz constant of $F$. ProxSkip outperforms the other two methods in the deterministic setting while it has a comparable performance in the stochastic setting. 
	
	\begin{figure}[H]
		\centering
		\begin{subfigure}[b]{0.45\textwidth}
			\centering
			\includegraphics[width=\textwidth]{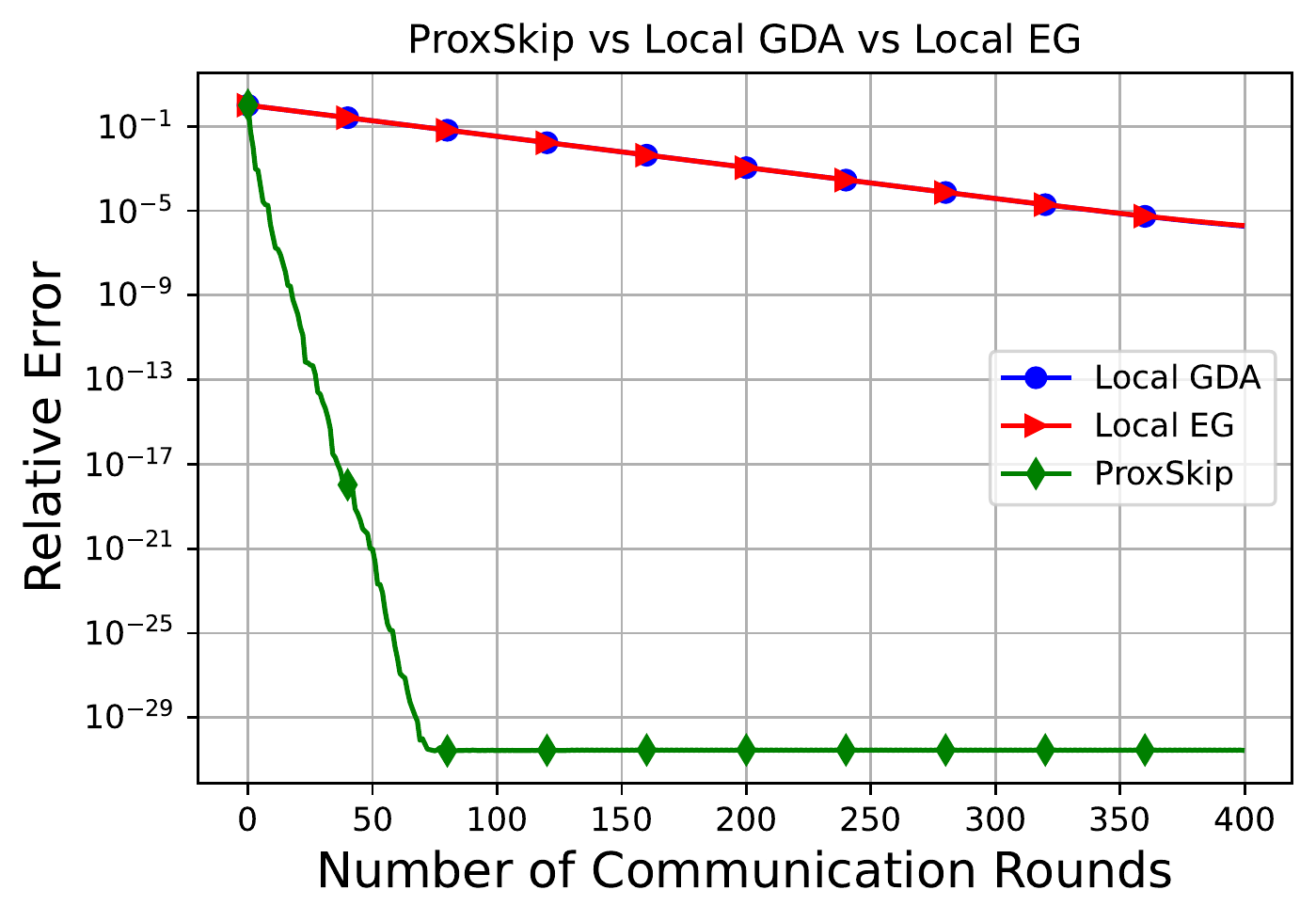}
			\caption{Deterministic Setting.}
		\end{subfigure}
		\hspace{1em}
		\begin{subfigure}[b]{0.45\textwidth}
			\centering
			\includegraphics[width=\textwidth]{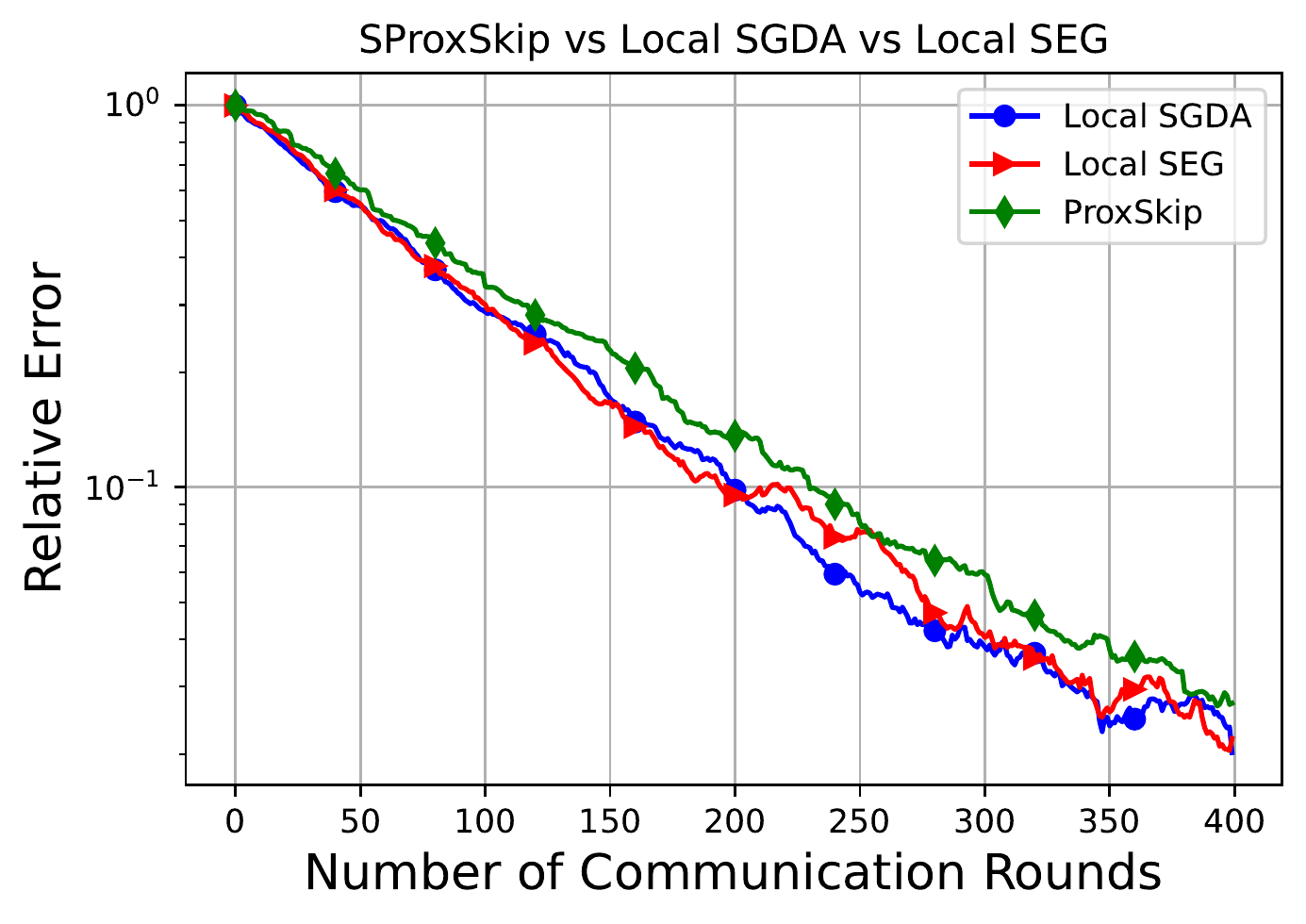}
			\caption{Stochastic Setting.}
		\end{subfigure}
		\caption{\emph{Comparison of ProxSkip-VIP-FL vs Local SGDA vs Local SEG on Heterogeneous Data with tuned stepsizes.}}
		\label{fig:TunedSteps}
	\end{figure}
	
	\subsection{ProxSkip-VIP-FL vs. ProxSkip-L-SVRGDA-FL}
	In Figure \ref{fig: Comparison of SProxSkip vs ProxSkip-L-SVRGDA}, we compare the stochastic version of ProxSkip-VIP-FL with ProxSkip-L-SVRGDA-FL. In Figure~\ref{fig: Comparison of SProxSkip vs ProxSkip-L-SVRGDA tuned}, we implement the methods with tuned stepsizes while in Figure~\ref{fig: Comparison of SProxSkip vs ProxSkip-L-SVRGDA theoretical} we use the theoretical stepsizes. For the theoretical stepsizes of ProxSkip-L-SVRGDA-FL, we use the stepsizes from Corollary \ref{cor:ProxSkip-L-SVRGDA-FL_complexity}. We observe that ProxSkip-L-SVRGDA-FL performs better than ProxSkip-VIP-FL with both tuned and theoretical stepsizes.
	\begin{figure}[H]
		\centering
		\begin{subfigure}[b]{0.45\textwidth}
			\centering
			\includegraphics[width=\textwidth]{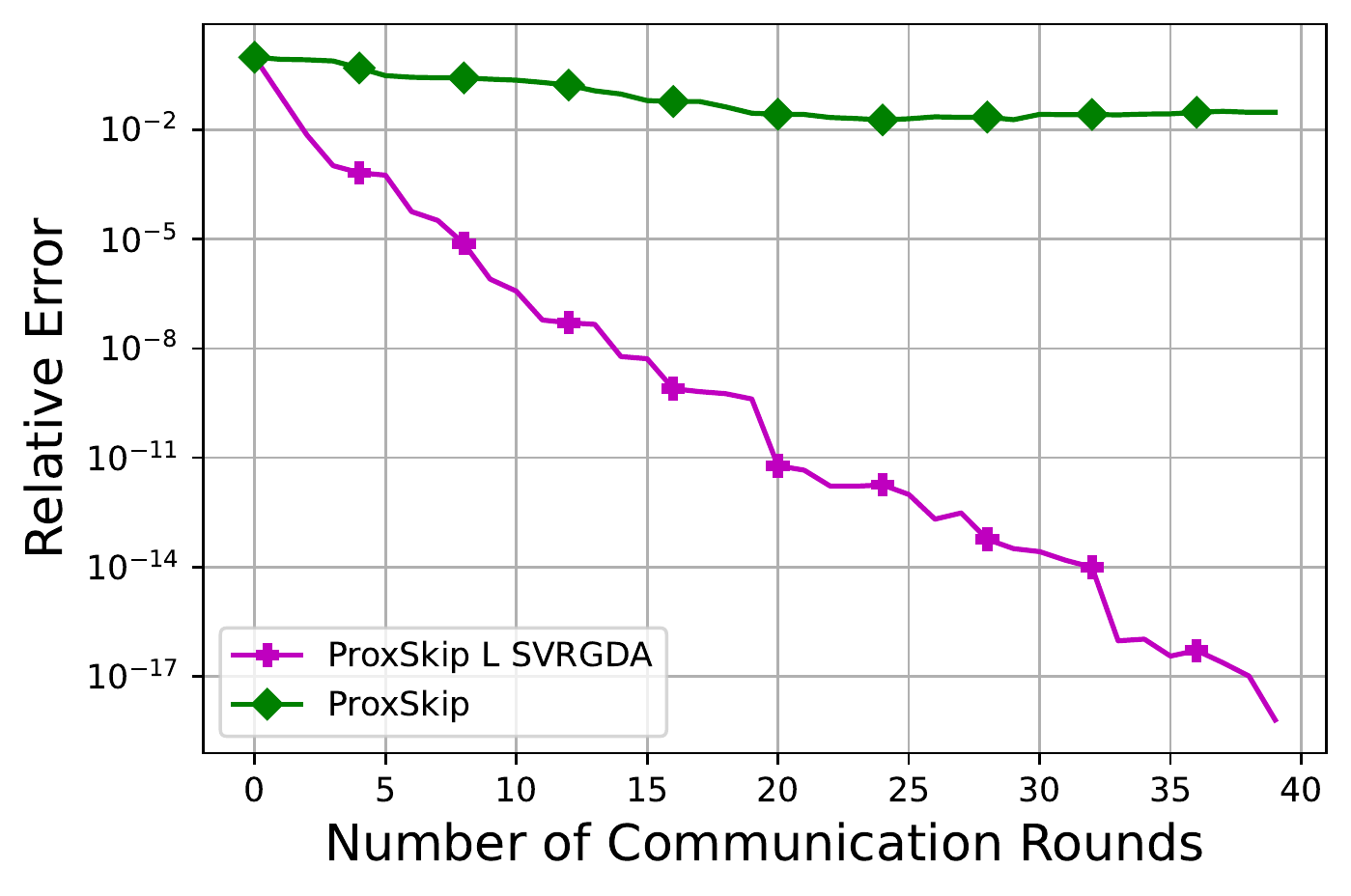}
			\caption{Tuned stepsize}
			\label{fig: Comparison of SProxSkip vs ProxSkip-L-SVRGDA tuned}
		\end{subfigure}
		\hspace{1em}
		\begin{subfigure}[b]{0.45\textwidth}
			\centering
			\includegraphics[width=\textwidth]{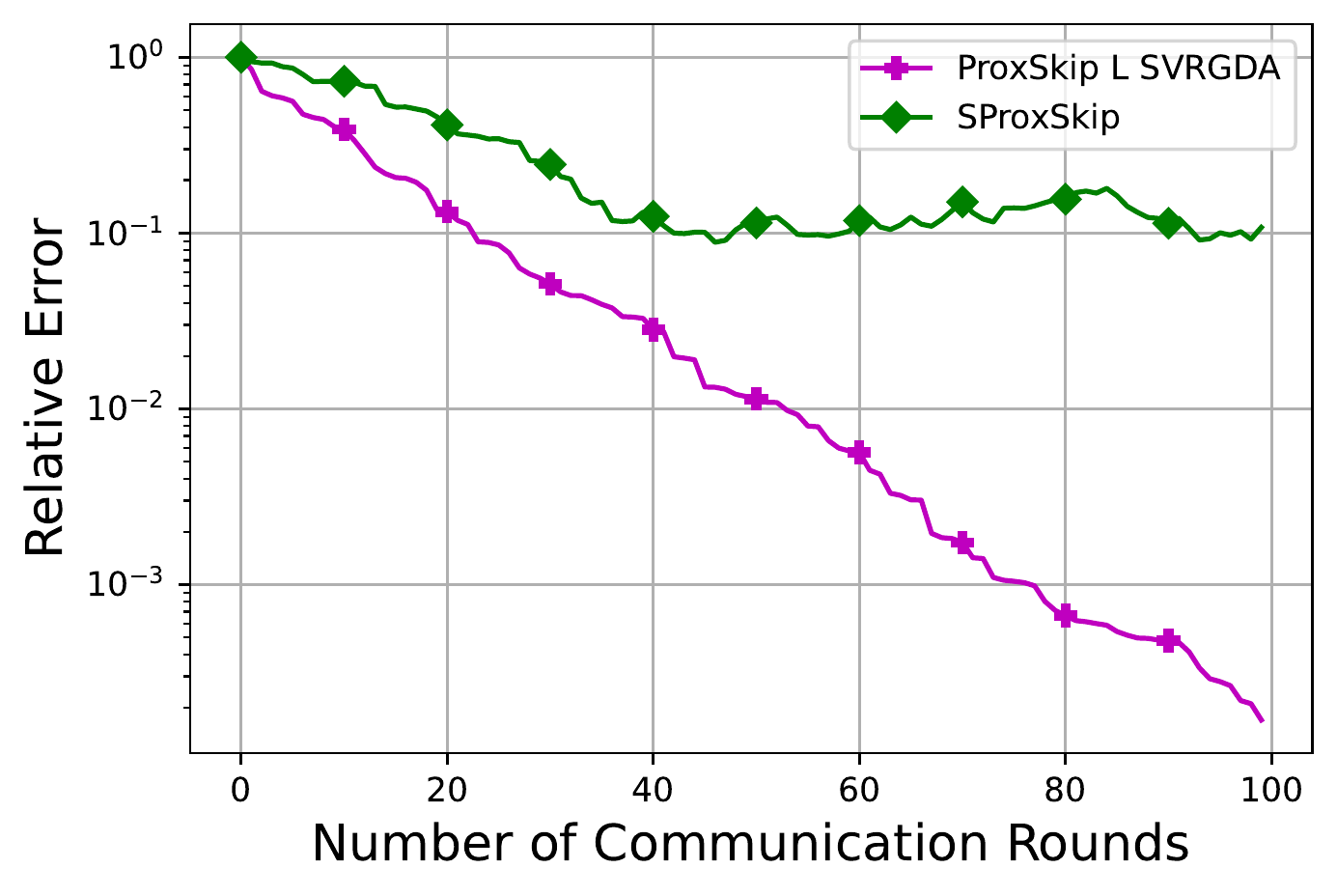}
			\caption{Theoretical stepsize}
			\label{fig: Comparison of SProxSkip vs ProxSkip-L-SVRGDA theoretical}
		\end{subfigure}
		\caption{\emph{Comparison of ProxSkip-VIP-FL and ProxSkip-L-SVRGDA-FL using the tuned and theoretical stepsizes.}}
		\label{fig: Comparison of SProxSkip vs ProxSkip-L-SVRGDA}
	\end{figure}
	\subsection{Low vs High Heterogeneity}
	\label{Performance on Heterogeneous vs Homogeneous Data with Tuned Stepsize}
	We conduct a numerical experiment on a toy example to verify the efficiency of our proposed algorithm. Following the setting in \citep{tarzanagh2022fednest}, we consider the minimax objective function 
	$$\min_{x_1 \in \mathbb{R}^d} \max_{x_2 \in \mathbb{R}^d} \frac{1}{n} \sum_{i = 1}^n  f_{i}(x_1, x_2)$$ 
	where $f_{i}$ are given by
	\begin{equation*}
		f_i(x_1, x_2)=-\automedpar{\frac{1}{2}\autonorm{x_2}^2-b_i^\top x_2+x_2^\top A_i x_1}+\frac{\lambda}{2}\autonorm{x_1}^2
	\end{equation*}
	Here we set the number of clients $n=100$, $d_1=d_2=20$ and $\lambda=0.1$. We generate $b_i\sim\mathcal{N}(0,s_i^2I_{d_2})$ and $A_i = t_i I_{d_1 \times d_2}$. For Figure~\ref{fig: Comparison of ProxSkip vs Local GDA vs Local EG on Homogeneous vs Heterogeneous Data p1}, we set $s_i = 10$ and $t_i = 1$ for all $i$ while in Figure \ref{fig: Comparison of ProxSkip vs Local GDA vs Local EG on Homogeneous vs Heterogeneous Data p2}, we generate $s_i \sim \text{Unif}(0,20)$ and $t_i \sim \text{Unif}(0, 1)$. 
	
	We implement Local GDA, Local EG, and ProxSkip-VIP-FL with tuned stepsizes (we use grid search to tune stepsizes Appendix \ref{apdx:numerical_experiments}). In Figure \ref{fig: Comparison of ProxSkip vs Local GDA vs Local EG on Homogeneous vs Heterogeneous Data p3}, we observe that Local EG performs better than ProxSkip-VIP-FL in homogeneous data. However, in Figure \ref{fig: Comparison of ProxSkip vs Local GDA vs Local EG on Homogeneous vs Heterogeneous Data p4}, the performance of Local GDA \citep{deng2021local} and Local EG deteriorates for heterogeneous data, and ProxSkip-VIP-FL outperforms both of them in this case. To get stochastic estimates, we add Gaussian noise \citep{beznosikov2022decentralized} (details in Appendix \ref{apdx:numerical_experiments}). We repeat this experiment in the stochastic settings in  Figure \ref{fig: Comparison of ProxSkip vs Local GDA vs Local EG on Homogeneous vs Heterogeneous Data p3} and \ref{fig: Comparison of ProxSkip vs Local GDA vs Local EG on Homogeneous vs Heterogeneous Data p4}. We observe that ProxSkip-VIP-FL has a comparable performance with Local SGDA \citep{deng2021local} and Local SEG in homogeneous data. However, ProxSkip-VIP-FL is faster on heterogeneous data.

	\begin{figure}[htbp]
		\centering
		\begin{subfigure}[b]{0.45\linewidth}
			\centering
			\includegraphics[width=\textwidth]{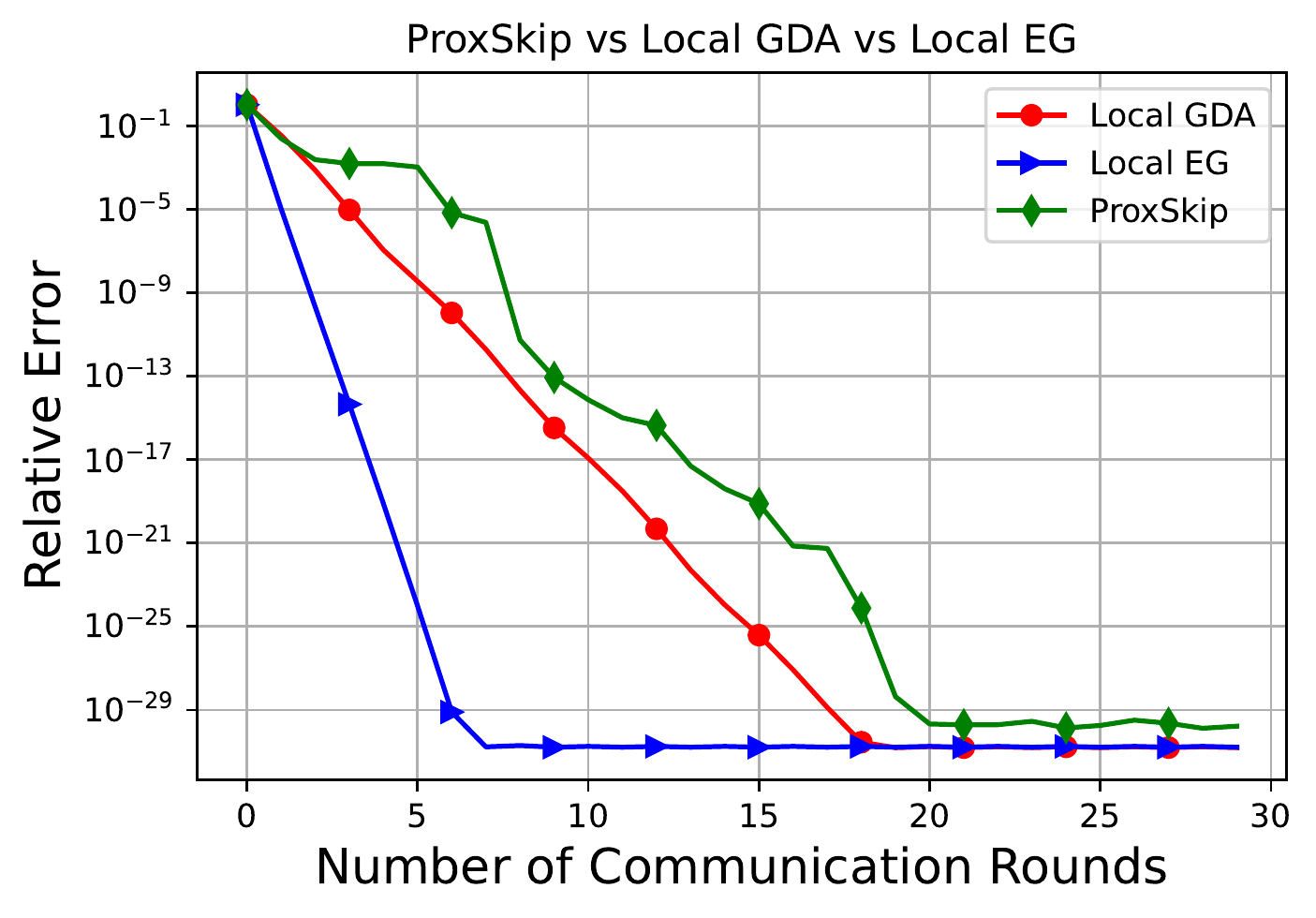}
			\caption{constant $s_i,t_i$}
			\label{fig: Comparison of ProxSkip vs Local GDA vs Local EG on Homogeneous vs Heterogeneous Data p1}
		\end{subfigure}
		\hspace{1em}
		\begin{subfigure}[b]{0.45\textwidth}
			\centering
			\includegraphics[width=\textwidth]{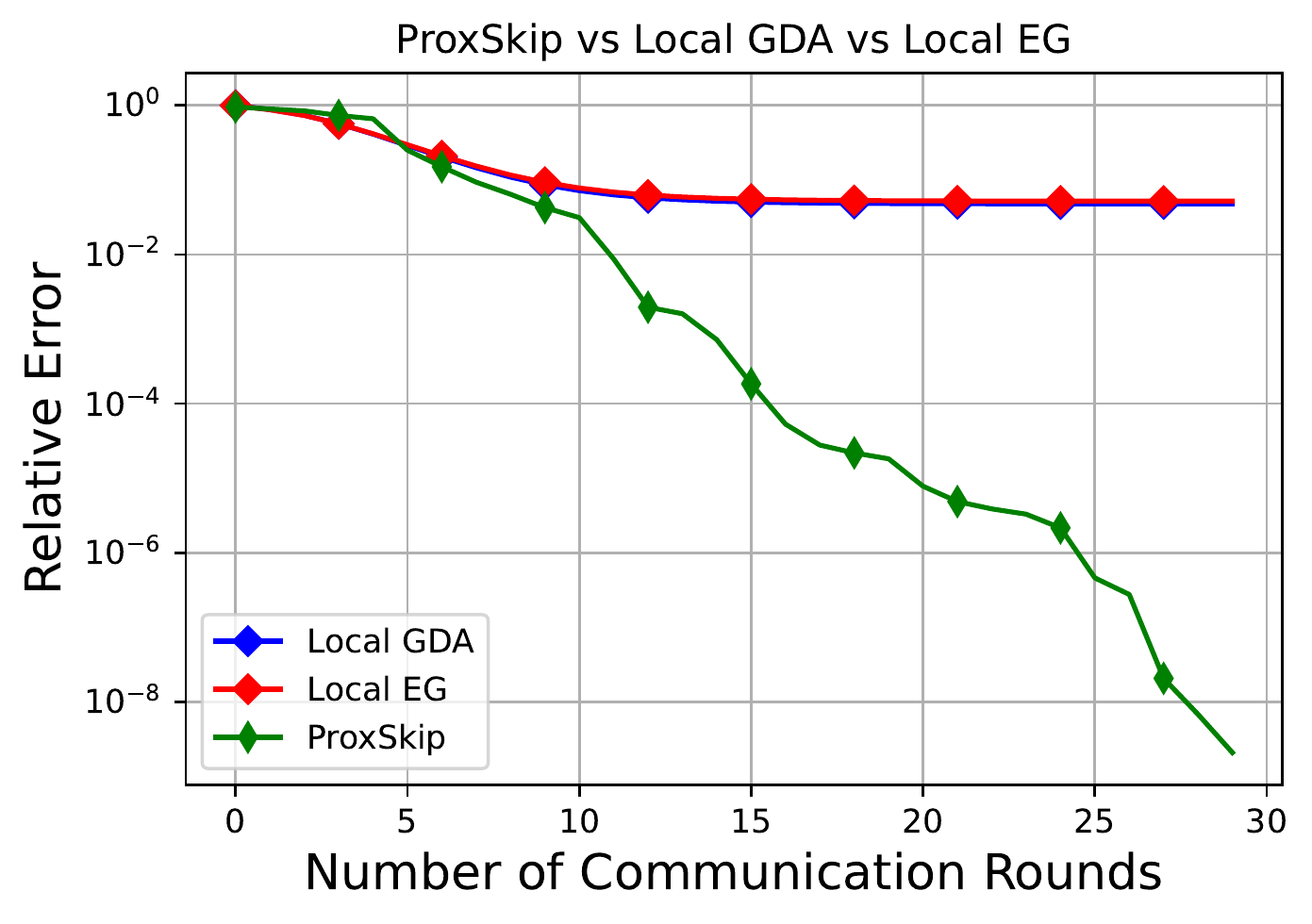}
			\caption{uniformly generated $s_i,t_i$}
			\label{fig: Comparison of ProxSkip vs Local GDA vs Local EG on Homogeneous vs Heterogeneous Data p2}
		\end{subfigure}
		\begin{subfigure}[b]{0.45\linewidth}
			\centering
			\includegraphics[width=\textwidth]{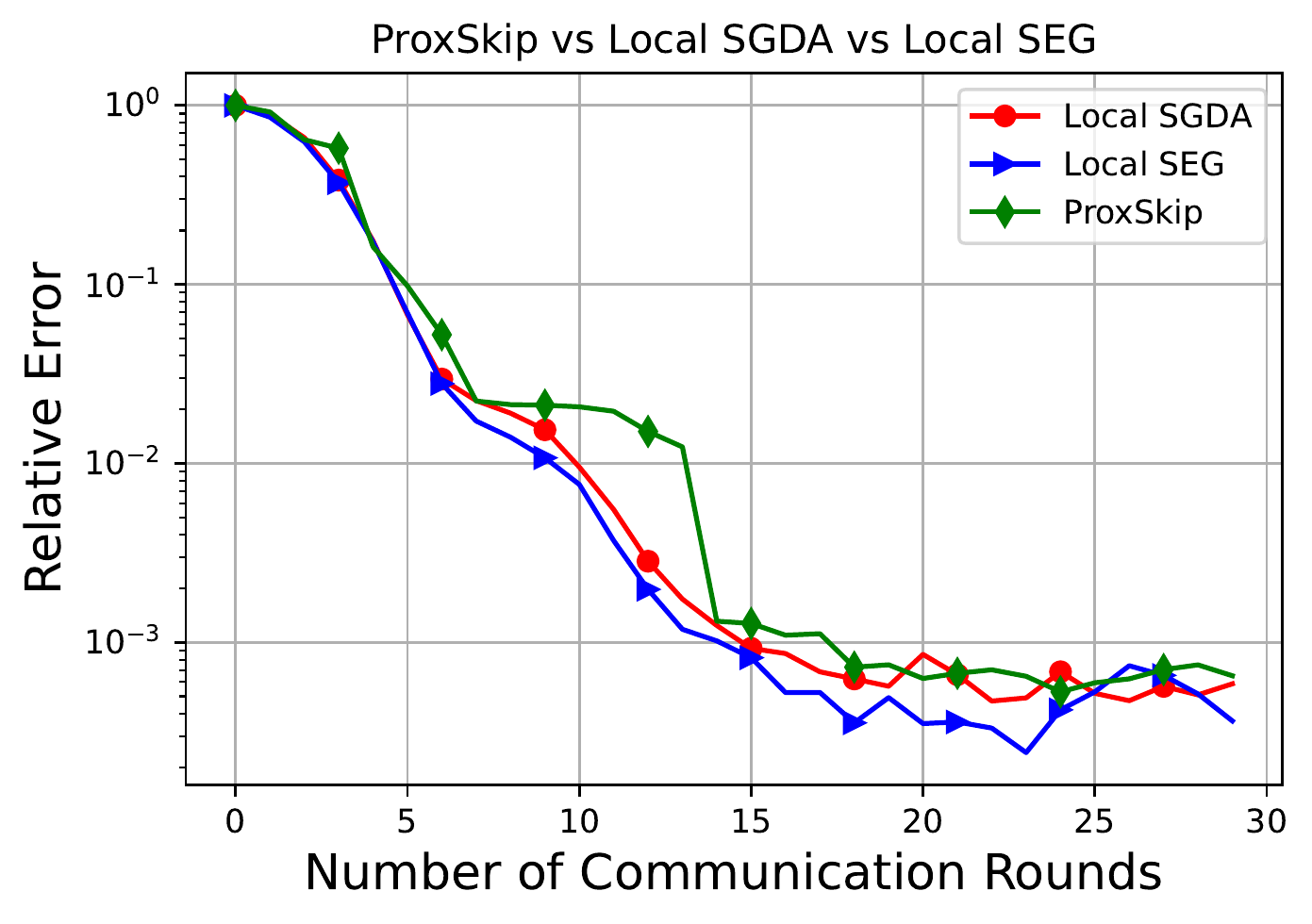}
			\caption{constant $s_i, t_i$}
			\label{fig: Comparison of ProxSkip vs Local GDA vs Local EG on Homogeneous vs Heterogeneous Data p3}
		\end{subfigure}
		\hspace{1em}
		\begin{subfigure}[b]{0.45\textwidth}
			\centering
			\includegraphics[width=\textwidth]{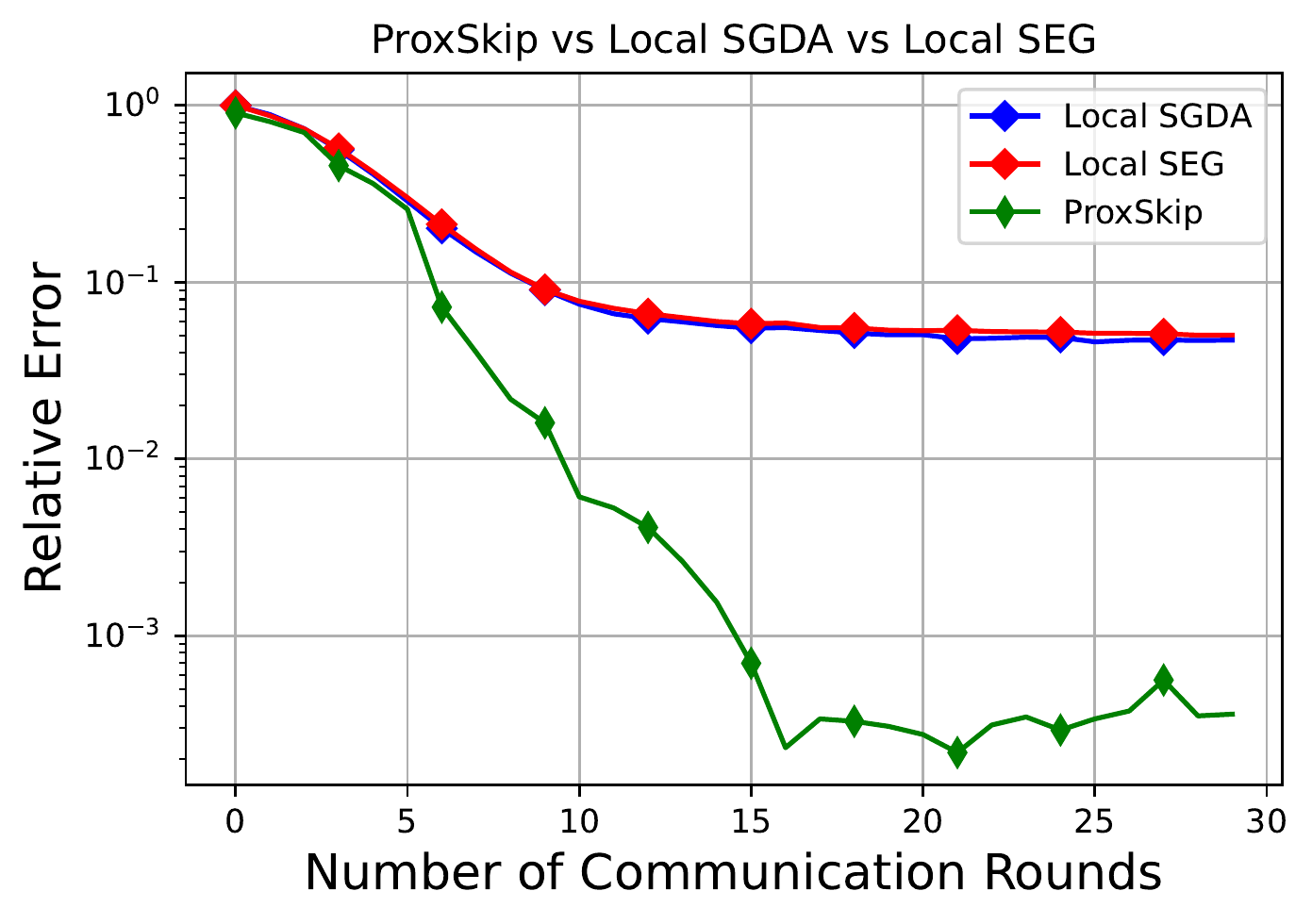}
			\caption{uniformly generated $s_i, t_i$}
			\label{fig: Comparison of ProxSkip vs Local GDA vs Local EG on Homogeneous vs Heterogeneous Data p4}
		\end{subfigure}
		\caption{\emph{Comparison of ProxSkip-VIP-FL vs Local GDA vs Local EG on Homogeneous vs Heterogeneous Data. In (a) and (b), we run the deterministic algorithms, while in (c) and (d), we run the stochastic versions. For (a) and (c), we set $s_i = 10, t_i = 1$ for all $i\in [n]$ and for (b) and (d), we generate $s_i,t_i$ uniformly from $s_i \sim \text{Unif}(0,20), t_i \sim \text{Unif}(0,1)$.}}
		\label{fig: Comparison of ProxSkip vs Local GDA vs Local EG on Homogeneous vs Heterogeneous Data}
	\end{figure}
	
	\subsection{Performance on Data with Varying Heterogeneity}
	In this experiment, we consider the operator $F$ given by 
	$$
	F(x) \coloneqq \frac{1}{2} F_1(x) + \frac{1}{2} F_2(x)
	$$ 
	where 
	$$
	F_1(x) \coloneqq M(x - x_1^*),\quad F_2(x) \coloneqq M(x - x_2^*)
	$$ 
	with $M \in \mathbb{R}^{2 \times 2}$ and $x_1^*,x_2^* \in \mathbb{R}^2$. For this experiment we choose 
	$$
	M \coloneqq I_2,\quad
	x_1^* = (\delta, 0),\quad
	x_2^* = (0, \delta).
	$$ 
	Note that, in this case, $x^* = \frac{1}{2}(x_1^* + x_2^*)$. Then the quantity $\max_{i \in [2]} \|F_i(x^*) - F(x^*)\|^2$, which quantifies the amount of heterogeneity in the model, is equal to $\frac{\delta^2}{2}$. Therefore, increasing the value of $\delta$ increases the amount of heterogeneity in the data across the clients. 
	
	We compare the performances of ProxSkip-VIP-FL, Local GDA, and Local EG when $\delta = 0$ and $\delta = 10^6$ in Figure \ref{fig: Comparison of ProxSkip-VIP-FL, Local GDA and Local EG with theoretical stepsizes with different delta}. In either case, ProxSkip-VIP-FL outperforms the other two methods with the theoretical stepsizes.
	
	\begin{figure}[htbp]
		\centering
		\begin{subfigure}[b]{0.45\textwidth}
			\centering
			\includegraphics[width=\textwidth]{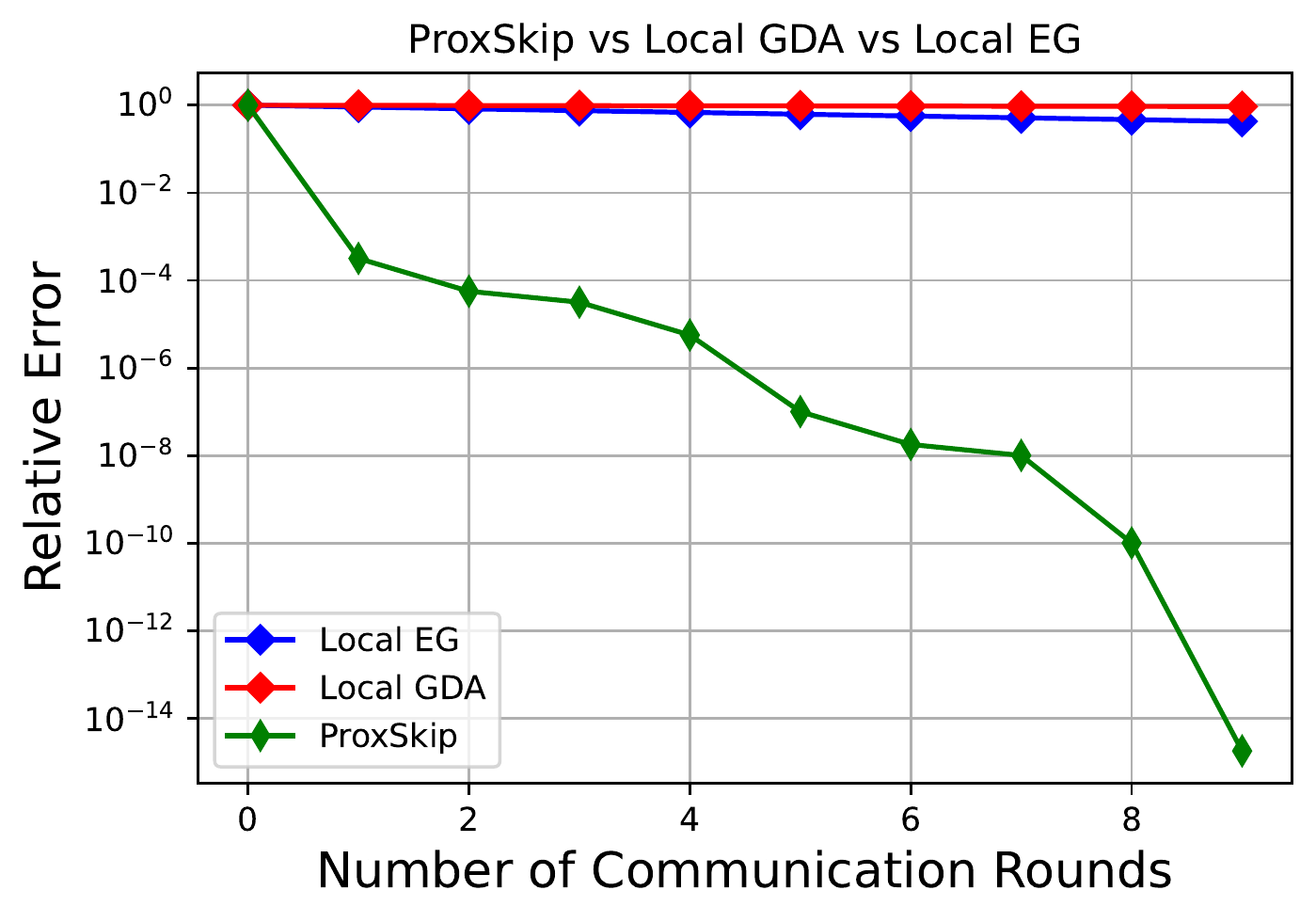}
			\caption{$\delta = 0$}
		\end{subfigure}
		\hspace{1em}
		\begin{subfigure}[b]{0.45\textwidth}
			\centering
			\includegraphics[width=\textwidth]{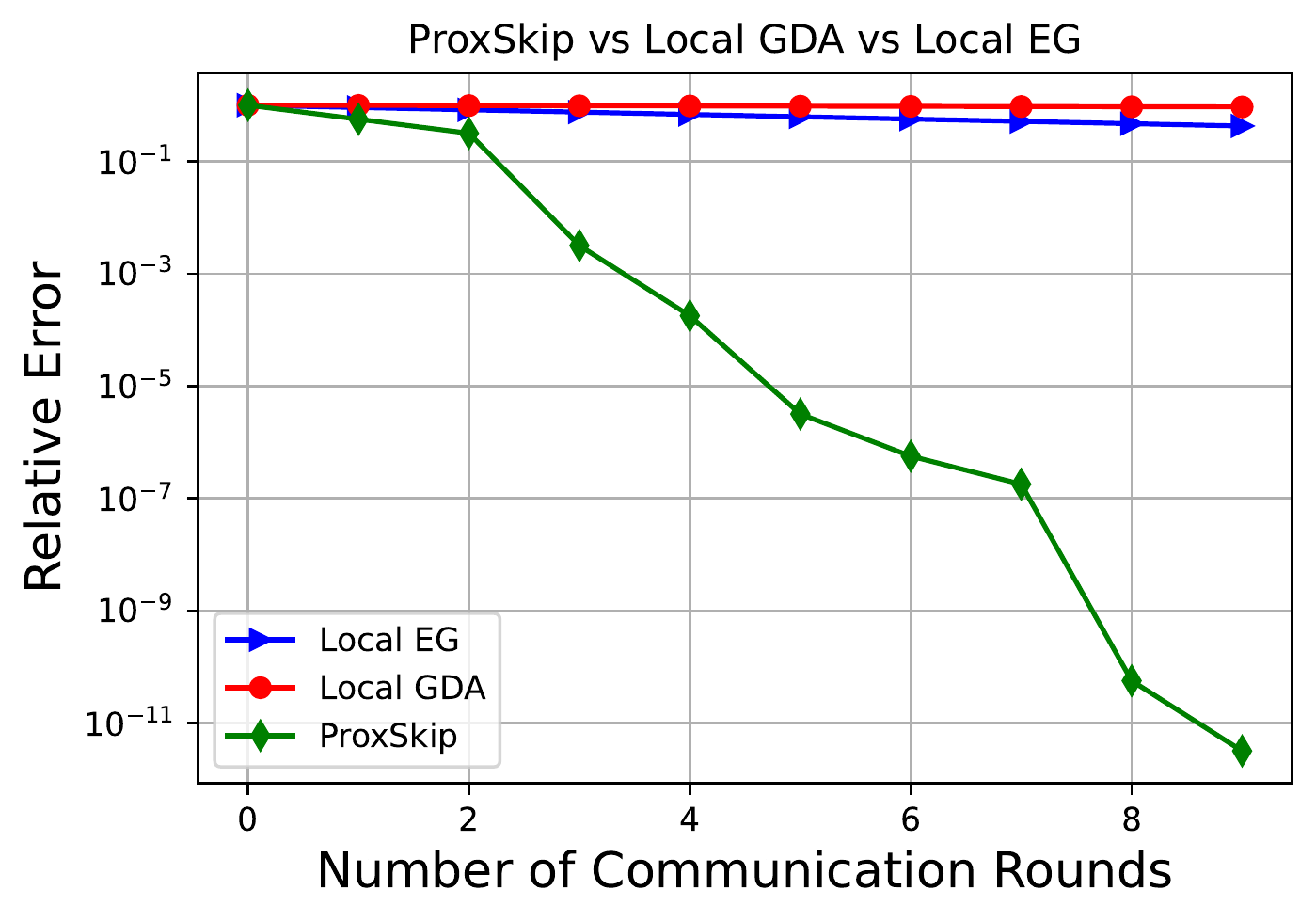}
			\caption{$\delta = 10^6$}
		\end{subfigure}
		\caption{\emph{Comparison of ProxSkip-VIP-FL, Local GDA and Local EG with theoretical stepsizes for $\delta = 0$ (left) and $\delta = 10^6$ (right).}}
		\label{fig: Comparison of ProxSkip-VIP-FL, Local GDA and Local EG with theoretical stepsizes with different delta}
	\end{figure}
	
	\subsection{Extra Experiment: Policemen Burglar Problem}
	In this experiment, we compare the perforamnce of ProxSkip-GDA-FL, Local GDA and Local EG on a Policemen Burglar Problem~\citep{nemirovski2013mini} (a particular example of matrix game) of the form:
	\begin{eqnarray*}
		\min_{x_1 \in \Delta} \max_{x_2 \in \Delta} f(x_1, x_2) = \frac{1}{n} \sum_{i = 1}^n  x_1^{\top}A_ix_2
	\end{eqnarray*}
	where $\Delta = \left\{ x \in \R^d \mid \mathbf{1}^{\top}x = 1, x \geq 0 \right\}$ is a $(d-1)$ dimensional standard simplex. We generate the $(r, s)$-th element of the matrix $A_i$ as follow
	\begin{eqnarray*}
		A_i(r,s) = w_r \left(1 - \exp \left\{- 0.8 |r - s| \right\} \right) \qquad \forall i \in [n] 
	\end{eqnarray*}
	where $w_r = |w_r'|$ with $w_r' \sim \mathcal{N}(0, 1)$. This matrix game is a constrained monotone problem and we use duality gap to measure the performance of the algorithms. Note that the duality gap for the problem $\min_{x \in \Delta} \max_{y \in \Delta} f(x, y)$ at $(\hat{x}, \hat{y})$ is defined as $\max_{y \in \Delta} f(\hat{x}, y) - \min_{x \in \Delta} f(x, \hat{y})$. 
	
	\begin{figure}[H]
		\centering
		\includegraphics[width=0.5\textwidth]{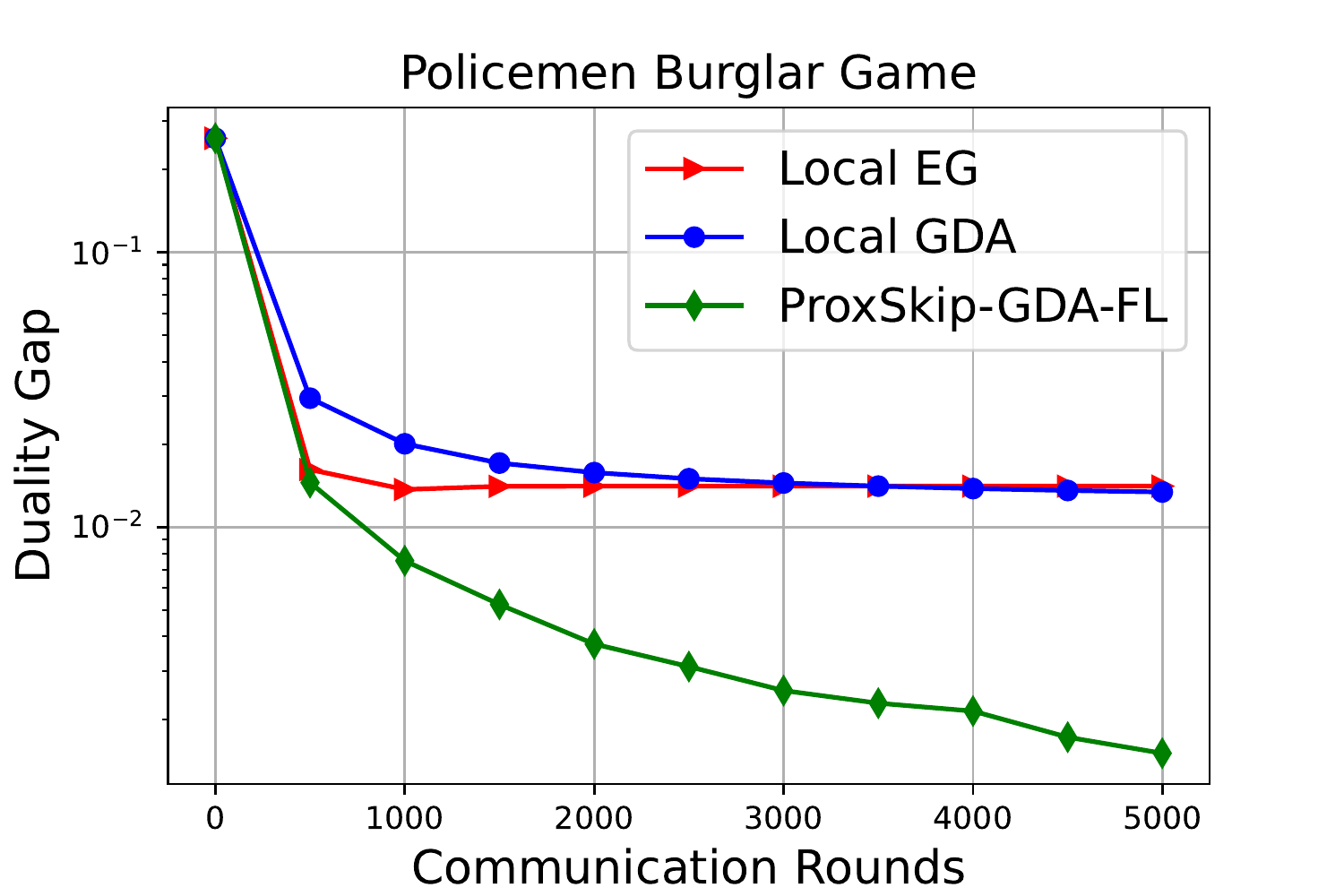}
		\caption{Comparison of ProxSkip-GDA-FL, Local EG and Local GDA on Policemen Burglar Problem after $5000$ communication rounds.}\label{fig:matrixgame}
	\end{figure}
	
	In Figure \ref{fig:matrixgame}, we plot the duality gap (on the $y$-axis) with respect to the moving average of the iterates, i.e. $\frac{1}{K+1} \sum_{k = 0}^K x_k$ (here $x_k$ is the output after $k$ many communication rounds). As we can observe in Figure \ref{fig:matrixgame}, our proposed algorithm ProxSkip-GDA-FL clearly outperforms Local EG and Local GDA in this experiment.

\end{document}